\theoremstyle{plain}
\newtheorem{theorem}{Theorem}[section]
\newtheorem{lemma}[theorem]{Lemma}
\newtheorem{corollary}[theorem]{Corollary}
\newtheorem{proposition}[theorem]{Proposition}
\theoremstyle{definition}
\newtheorem{definition}[theorem]{Definition}
\newtheorem{remark}[theorem]{Remark}
\def\bC{{\mathbf{C}}}
\def\bP{{\mathbf{P}}}
\def\bQ{{\mathbf{Q}}}
\def\bR{{\mathbf{R}}}
\def\bZ{{\mathbf{Z}}}
\def\cB{{\mathcal{B}}}
\def\cC{{\mathcal{C}}}
\def\cH{{\mathcal{H}}}
\def\cK{{\mathcal{K}}}
\def\cM{{\mathcal{M}}}
\def\cP{{\mathcal{P}}}
\def\cX{{\mathcal{X}}}
\def\fM{{\mathfrak{M}}}
\def\Aut{\operatorname{Aut}}
\def\div{\operatorname{div}}
\def\Id{\operatorname{Id}}
\def\Im{\operatorname{Im}}
\def\Mon{\operatorname{Mon}}
\def\NS{\operatorname{NS}}
\def\O{\operatorname{O}}
\def\Re{\operatorname{Re}}
\def\SO{\operatorname{SO}}
\def\alg{{\mathrm{alg}}}
\def\KKK{{\mathrm{K3}}}
\def\Kum{{\mathrm{Kum}}}
\let\ordexists\exists
\def\exists{\operatorname{\ordexists}}
\let\ordforall\forall
\def\forall{\operatorname{\ordforall}}
\def\inner#1{{\left<{#1}\right>}}
\def\set#1{{\left\{{#1}\right\}}}
\def\setmid#1#2{{\left\{{#1}\;\middle|\;{#2}\right\}}}
\def\tilde{\widetilde}
\def\setminus{\smallsetminus}
\def\emptyset{\varnothing}
\def\cf{{\it cf.\ }}
\def\longarrow#1#2{\mathchoice{#2}{#1}{#1}{#1}}
\def\to{\longarrow{\rightarrow}{\longrightarrow}}
\def\simto{\longarrow{\xrightarrow\sim}{\stackrel\sim\longrightarrow}}
\def\into{\longarrow{\hookrightarrow}{\lhook\joinrel\longrightarrow}}
\let\shortmapsto\mapsto
\def\mapsto{\longarrow{\shortmapsto}{\longmapsto}}
\def\tO{\widetilde{\O}}
\def\tSO{\widetilde{\SO}}
\def\hO{\widehat{\O}}
\def\tL{\widetilde{\Lambda}}
\def\amp{\mathrm{amp}}
\def\marked{\mathrm{marked}}
\def\OG{\mathrm{OG}}
\title[Image of the period map for hyperkähler manifolds]{On the image of the period map for polarized hyperkähler manifolds}
\author[J.~Song]{Jieao Song}
\address{Université Paris Cité, CNRS, IMJ-PRG, F-75013 Paris, France}
\email{\href{mailto:jieao.song@imj-prg.fr}{\tt jieao.song@imj-prg.fr}}
\date{\today}
\begin{document}
\begin{abstract}
The moduli space for polarized hyperkähler manifolds of~$\KKK^{[m]}$-type or
$\Kum_m$-type with a given polarization type is not necessarily connected,
which is a phenomenon that only happens for~$m$ large.
The period map restricted to each connected component
gives an open embedding into the period domain, and the complement of the image
is a finite union of Heegner divisors. We give a simplified formula for the
number of connected components, as well as a simplified criterion to enumerate
the Heegner divisors in the complement. In particular, we show that the image
of the period map may be different when restricted to different components of
the moduli space.
\end{abstract}
\maketitle

\section{Introduction}\label{sec:introduction}
A \emph{hyperkähler} manifold is a simply-connected compact Kähler manifold~$X$
such that $H^0(X,\Omega_X^2)=\bC\omega$, where~$\omega$ is a nowhere degenerate
holomorphic 2-form on~$X$. These manifolds form an important class among
compact Kähler manifolds with trivial canonical bundle. For example, in
dimension~$2$, these are precisely \emph{K3 surfaces}. Higher-dimensional
examples include manifolds of~$\KKK^{[m]}$-type (those deformation equivalent to
Hilbert powers of K3 surfaces), $\Kum_m$-type (generalized Kummer varieties and
their deformations), and two families of examples,~$\OG_6$ and~$\OG_{10}$,
discovered by O'Grady. These four families of examples are the only deformation
types known up to now.

Given a hyperkähler manifold~$X$, there is a quadratic form called the
\emph{Beauville--Bogomolov--Fujiki form} on the free abelian cohomology
group~$H^2(X,\bZ)$. It gives us a lattice structure of signature~$(3,b_2-3)$
on the cohomology group and consequently, a polarized Hodge structure, which is
fundamental in the study of hyperkähler manifolds. In the case of
a K3 surface, this form coincides with the intersection product. If we fix the
deformation type of~$X$, the lattice is also fixed and will be denoted
by~$\Lambda$.  We call an isometry $\eta\colon H^2(X,\bZ)\simto\Lambda$ a
\emph{marking} of~$X$. Denote by~$\cM_\marked$ the moduli space for marked
hyperkähler manifolds~$(X,\eta)$ of the given deformation type. On each
connected component~$\cM_\marked^0$ of the moduli space~$\cM_\marked$, the
Hodge structures provide a \emph{period map}
\[
\wp^0_\marked\colon \cM^0_\marked\to\Omega_\marked,
\]
where
\[
\Omega_\marked\coloneqq\setmid{[x]\in \bP(\Lambda_\bC)}{(x,x)=0,(x,\bar x)>0}
\]
is a complex manifold called the \emph{period domain}.
The global Torelli theorem, proven by Verbitsky, states
that~$\wp^0_\marked$ is surjective, generically injective, and
identifies pairwise inseparable points.

On a projective hyperkähler manifold~$X$, we may consider the extra datum of a
\emph{polarization}, that is, a primitive ample class~$H\in H^2(X,\bZ)$. Any
marking~$\eta$ maps~$H$ to a vector~$\eta(H)\in\Lambda$, so it is reasonable to
define the \emph{polarization type}~$T$ of $(X,H)$ as the~$\O(\Lambda)$-orbit
of~$\eta(H)$ in~$\Lambda$, which does not depend on the choice of the
marking~$\eta$. There is a quasi-projective moduli space~$\cM_T$ for
polarized hyperkähler manifolds $(X,H)$ of fixed polarization type~$T$. For K3
surfaces, each polarization type~$T$ is uniquely determined by its square~$2d$
and each moduli space~$\cM_{2d}$ is an irreducible quasi-projective variety of
dimension~19. However, for their higher-dimensional analogues, the polarization
types are more complicated to describe: apart from the square, there is another
invariant, the \emph{divisibility}. Moreover, Apostolov showed in~\cite{Apostolov}
that for some polarization types~$T$ on manifolds of~$\KKK^{[m]}$-type, the moduli
space~$\cM_T$ may have several connected components. Onorati obtained similar
results for $\Kum_m$-type in \cite{Onorati:kummer}. We shall review their results
and give a simplified expression for the exact number of components in
Section~\ref{sec:number} (Proposition~\ref{prop:numberoftau} and
Proposition~\ref{prop:numberofT}).

One can also consider the period map for polarized hyperkähler manifolds and
its restriction to each connected component~$\cM^0_T$ of the
polarized moduli space~$\cM_T$. We will use the letter~$\tau$ to denote a
\emph{deformation type} of polarizations of type~$T$. Such deformation types
are in bijection with the connected components of~$\cM_T$, so we will
write~$\cM_\tau$ instead of~$\cM^0_T$. In order to get rid of the choice of a
marking, we consider the quotient of the corresponding period domain~$\Omega$,
which is a hyperplane section inside~$\Omega_{\marked}$, by the action of the
elements in the orthogonal group~$\O(\Lambda)$ that stabilize the polarization.
In this way, we get a period domain~$\cP_T$, depending only on the polarization
type~$T$. But the global Torelli theorem no longer holds in this case, as the
map from~$\cM_\tau$ to~$\cP_T$ might not be
generically injective. In fact, it factors through~$\cP_\tau$, the quotient
of~$\Omega$ by a smaller group~$\Mon(\Lambda)$, the \emph{monodromy group},
which is a normal subgroup of~$\O(\Lambda)$ for all the
known deformation types (see Table~\ref{table:mon}).
Thus the correct global Torelli theorem says that the polarized period map
\[\begin{tikzcd}
\wp_\tau\colon\cM_\tau\rar[hook]&\cP_\tau\dar{/G}\\ &\cP_T
\end{tikzcd}\]
is an open immersion, where~$\cP_\tau$ is a covering space of~$\cP_T$ with
finite deck transformation group~$G$. The complement of the image of this open
immersion is a finite union of divisors in~$\cP_\tau$. Intuitively, when the
periods of the manifolds~$X$ in the family move towards the boundary of the
image, the polarization~$H$ on~$X$ will move towards the boundary of the ample
cone.  Therefore, the determination of the divisors in the complement of the
image is intimately related to the geometry of the ample cone for manifolds~$X$
in the family.

In the~$\KKK^{[m]}$-type case, the description of the ample cone was given by
Bayer--Hassett--Tschinkel~\cite{BayerHassettTschinkel}, using the theory of
Bayer--Macrì~\cite{BayerMacri}. The description is based on a canonical embedding
of~$H^2(X,\bZ)$ into a larger lattice~$\tilde\Lambda$, known as the \emph{Mukai
lattice}. The ample cone can then be described using some numerical conditions.
The analogous result for $\Kum_m$-type was obtained by Yoshioka \cite{Yoshioka}.
We will review this in Section~\ref{sec:image} and give a simplified
description, without explicitly referring to the larger Mukai
lattice (Proposition~\ref{prop:ample_cone}). We will use this description to
characterize the divisors in the complement of the image of the period map.
Note that the~$\KKK^{[2]}$-type case was completely treated
in~\cite{DebarreMacri}
(see also \cite[Appendix~B]{Debarre:survey}).

A natural question arises of whether for a given polarization type~$T$,
different connected components~$\cM_\tau$ of~$\cM_T$ have the same image
in~$\cP_\tau$ under their corresponding period map. This question in general
is not well-posed, as there is no canonical way to identify the period
domains~$\cP_\tau$ for different components, due to the action of the
deck transformation group~$G$. Nevertheless, there is no problem of
identification when~$G$ is trivial, and we provide a negative answer in
the~$\KKK^{[m]}$-type case:
by using our numerical description of the image, we construct in
Section~\ref{sec:examples} an example where two connected components of the
same~$\cM_T$ have different images in~$\cP_T$. We will also give another
example where the group~$G$ is non-trivial and the
image of the period map in~$\cP_\tau$ is not~$G$-invariant above~$\cP_T$.

\subsection*{Notation} For a fixed deformation type of hyperkähler
manifolds, we use~$\cM_{\marked}$ (resp.~$\cM_T$) to denote the marked
(resp.~polarized) moduli space. The notation~$\cM^0$ will be used to denote
a connected component of the corresponding moduli space~$\cM$.

For a positive integer~$n$, we denote by~$\rho(n)$ the number of distinct prime
divisors of~$n$ and by~$\tilde\rho(n)$ the number~$\rho(n)$ if~$n$ is odd
and~$\rho(n/2)$ if~$n$ is even. For a prime number $p$, we write $v_p(n)$ for
the $p$-adic valuation of $n$.

To treat $\KKK^{[m]}$-type and $\Kum_m$-type manifolds simultaneously, we let
$\tilde m=m-1$ for $\KKK^{[m]}$-type and $\tilde m=m+1$ for $\Kum_m$-type.

\subsection*{Acknowledgements}
This text results from my master thesis. I would like to thank my advisor
Olivier Debarre for introducing me to this subject, and for carefully reading
the manuscript and providing many corrections and suggestions. I also
heartily thank the referees, in particular for suggesting that various results
can be extended to all known deformation types.

\section{Setup}\label{sec:setup}
In this section, we review the construction of the polarized period map and its
relation with the monodromy group, following the work of
Markman~\cite[Section 4,7, and 8]{Markman:survey}. We reformulate some of the results
to give a simpler presentation and to better suit our needs for later sections.
We will consider a fixed deformation type of hyperkähler manifolds and denote
by~$\Lambda$ the lattice defined by the Beauville--Bogomolov--Fujiki form on
the second cohomology group, which has signature~$(3,b_2-3)$.

First we recall the following definitions (\cf \cite[Definition
1.1]{Markman:survey}).
\begin{definition}
Let~$X$ and~$X'$ be hyperkähler manifolds of the given deformation type.
\begin{itemize}
\item[(i)] An isomorphism $f\colon H^2(X,\bZ)\simto H^2(X',\bZ)$ is called a
\emph{parallel transport operator} if there exist a smooth and proper family
$\pi\colon\cX\to B$ of hyperkähler manifolds, with points $b,b'\in B$
and a path $\gamma\colon[0,1]\to B$ connecting~$b$ and~$b'$, such that $X\simeq
\cX_b$, $X'\simeq\cX_{b'}$, and~$f$ is given as the parallel transport in the
local system $R^2\pi_*\bZ$ along~$\gamma$.
\item[(ii)] An automorphism $f\colon H^2(X,\bZ)\simto H^2(X,\bZ)$ that is a
parallel transport operator is called a \emph{monodromy operator}. The subgroup
of $\O(H^2(X,\bZ))$ generated by monodromy operators is called the
\emph{monodromy group} of~$X$ and denoted by~$\Mon(X)$.
\item[(iii)] If $(X,H)$ and $(X',H')$ are polarized hyperkähler manifolds, we
define similarly a \emph{polarized parallel transport operator} $f\colon
H^2(X,\bZ)\simto H^2(X',\bZ)$ to be one induced by a path~$\gamma$ in a family
of polarized hyperkähler manifolds. In other words, the local system
$R^2\pi_*\bZ$ admits a section~$h$ of ample classes, such that $h(b)=H$ and
$h(b')=H'$.\qedhere
\end{itemize}
\end{definition}

In this paper, we will make the assumption that the monodromy group~$\Mon(X)$
is a normal subgroup of~$\O(H^2(X,\bZ))$, in which case it can be identified as
a subgroup~$\Mon(\Lambda)$ of~$\O(\Lambda)$. This holds for all
known deformation types of hyperkähler manifolds.

A first property of the monodromy group~$\Mon(\Lambda)$ can be given in terms
of the \emph{spinor norm}, which is the following homomorphism of groups
\[
\sigma\colon\O(\Lambda_\bR)\simeq\O(3,b_2-3)\to\set{\pm1},
\]
given by the action on the orientation of a positive three-space~$W_3$
of~$\Lambda_\bR$. In a more canonical way, we may consider the positive cone
\[
\tilde\cC_\Lambda\coloneqq\setmid{x\in\Lambda_\bR}{(x,x)>0}.
\]
For any positive three-space~$W_3$ in~$\Lambda_\bR$,~$W_3\setminus\set{0}$ is a
deformation retract of~$\tilde\cC_\Lambda$. So an orientation of~$W_3$
determines a generator of~$H^2(W_3\setminus\set{0},\bZ)\simeq
H^2(\tilde\cC_\Lambda,\bZ)\simeq\bZ$. The two generators
of~$H^2(\tilde\cC_\Lambda,\bZ)$ are called \emph{orientation classes} of the
positive cone~$\tilde\cC_\Lambda$ and the spinor norm can be defined by the
action on them (\cf \cite[Section 4]{Markman:survey}).
For any subgroup~$G$ of~$\O(\Lambda)$, we write~$G^+$ for the subgroup of~$G$
consisting of elements of trivial spinor norm.

\begin{proposition}\label{prop:Oplus}
The monodromy group~$\Mon(\Lambda)$ is contained in~$\O^+(\Lambda)$.
\end{proposition}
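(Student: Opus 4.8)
The plan is to attach to each manifold in the deformation type a \emph{canonical} orientation class of its positive cone and to show that every parallel transport operator respects it; since monodromy operators are by definition parallel transport operators with $X=X'$, they will then fix this class and hence have trivial spinor norm. To construct the class, let $X$ be a hyperkähler manifold of the given type with period $[\omega]$, where $\omega\in H^2(X,\bC)$ is the holomorphic $2$-form. Writing $\omega=x+iy$ with $x,y\in H^2(X,\bR)$, the conditions $(\omega,\omega)=0$ and $(\omega,\bar\omega)>0$ say exactly that $\inner{x,y}$ is a positive two-plane on which the ordered basis $(\Re\omega,\Im\omega)$ fixes an orientation; rescaling $\omega$ by $\lambda\in\bC^\times$ rotates and scales this basis, so the oriented two-plane depends only on $[\omega]$. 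Choosing any Kähler class $\kappa\in H^2(X,\bR)$, one has $(\kappa,x)=(\kappa,y)=0$ and $(\kappa,\kappa)>0$, so $(\Re\omega,\Im\omega,\kappa)$ spans an oriented positive three-space and thereby determines an orientation class $o_X$ of the positive cone $\tilde\cC_{H^2(X,\bR)}$. Since the Kähler cone is convex, hence connected, any two choices of $\kappa$ are joined by a segment of positive classes orthogonal to $\inner{x,y}$; the corresponding three-spaces interpolate continuously and give the same generator of $H^2(\tilde\cC_{H^2(X,\bR)},\bZ)\simeq\bZ$. Thus $o_X$ is independent of $\kappa$ and is canonically attached to $X$.

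Next I would check that parallel transport operators preserve $o_X$. Let $\pi\colon\cX\to B$, the path $\gamma\colon[0,1]\to B$, and $f$ be as in the definition of a parallel transport operator. Along $\gamma$ the period varies holomorphically, so $\inner{\Re\omega,\Im\omega}$ is a continuously varying oriented positive two-plane inside the local system $R^2\pi_*\bR$. Locally on $B$ one may also choose a smoothly varying family of Kähler forms, and the resulting orientation class of the fibrewise positive cone is independent of that choice by the previous paragraph; hence $t\mapsto o_{\cX_{\gamma(t)}}$ is a locally constant, therefore constant, section of the local system of orientation classes along $\gamma$. Consequently the parallel transport $f$ sends $o_X=o_{\cX_b}$ to $o_{X'}=o_{\cX_{b'}}$.

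Finally, a monodromy operator is a parallel transport operator with $X=X'$, so it fixes the single orientation class $o_X$ and therefore has trivial spinor norm. As such operators generate $\Mon(X)$, and the spinor norm is defined through the action on orientation classes of the positive cone, we conclude $\Mon(X)\subseteq\O^+(H^2(X,\bZ))$; transporting through a marking yields $\Mon(\Lambda)\subseteq\O^+(\Lambda)$. I expect the main obstacle to be the second paragraph: one must ensure a continuous choice of Kähler class along $\gamma$ and that the fibrewise orientation class is genuinely well defined. Both are resolved by the convexity of the Kähler cone, which makes $o_X$ independent of the choice and hence globally coherent even where no single Kähler class can be chosen continuously.
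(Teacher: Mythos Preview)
Your proposal is correct and follows essentially the same approach as the paper: both construct a canonical orientation class of the positive cone from the ordered basis $(\Re\omega,\Im\omega,\kappa)$ with $\kappa$ a Kähler class, observe that it is independent of the choice of $\kappa$, and conclude that it is preserved under deformation, so monodromy operators have trivial spinor norm. The only cosmetic difference is that the paper phrases the constancy in terms of connected components of the marked moduli space $\cM^0_\marked$, whereas you argue directly along a path in $B$; the content is the same.
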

\begin{proof}
For a marked pair~$(X,\eta)$ with period~$[x]\in\Omega_\marked$,
we can take a Kähler class~$H$ on~$X$ and consider the orientation on the
positive three-space~$\bC x\oplus \bR\eta(H)$ given by the basis~$\set{\Re
x,\Im x, \eta(H)}$. This gives a distinguished orientation class
of~$\tilde\cC_\Lambda$, which is constant on each connected
component~$\cM^0_\marked$ of the marked moduli space~$\cM_\marked$. Therefore
every monodromy operator must have trivial spinor norm.
\end{proof}

From now on, we pick one connected component~$\cM^0_\marked$ of the marked
moduli space $\cM_\marked$. Recall from the introduction that we have the
period map
\begin{equation}\label{eq:marked-period}
\wp=\wp^0_\marked\colon \cM^0_\marked\to\Omega_\marked,
\end{equation}
which is surjective by the global Torelli theorem.
Let~$h\in \Lambda$ be a primitive element of positive square.  Consider the
hyperplane section
\begin{align*}\label{eq:Omega_h}
\Omega_{\marked}\cap h^\perp=&\setmid{[x]\in \Omega_{\marked}}{(x,h)=0}\\
=&\setmid{[x]\in\bP(\Lambda_\bC)} {(x,x)=(x,h)=0,(x,\bar x)>0}
\end{align*}
inside the marked period domain~$\Omega_\marked$.  It has two connected
components denoted by~$\Omega_{h}$ and~$\Omega_{-h}$.  For
any~$[x]\in\Omega_h\sqcup\Omega_{-h}$, the real vector space~$\bC x\oplus
\bR h$ is a positive three-space in~$\Lambda_\bR$, but the orientation classes
given by the basis~$\set{\Re x,\Im x,h}$ are opposite on the two connected
components. Since there is a distinguished orientation class for the connected
component~$\cM^0_\marked$, up to interchanging~$\Omega_h$ and~$\Omega_{-h}$, we
may suppose that it coincides with~$\set{\Re x,\Im x,h}$ for~$[x]\in\Omega_h$
(and consequently it also coincides with~$\set{\Re x,\Im x, -h}$
for~$[x]\in\Omega_{-h}$).

Consider the preimages under the period map~\eqref{eq:marked-period} of each
of these two connected components. We denote them by~$\cM_{h}$ and~$\cM_{-h}$.
Due to the surjectivity of the period map, both are non-empty divisors
in~$\cM^0_\marked$. In fact, the union~$\cM_h\sqcup\cM_{-h}$ is
exactly the locus where the class~$\eta^{-1}(h)$ is algebraic.
\begin{proposition}\label{prop:Omega_h}
For a very general~$(X,\eta)$ in~$\cM_{h}$, the
class~$\eta^{-1}(h)$ is ample, while for a very general~$(X,\eta)$
in~$\cM_{-h}$, the class~$\eta^{-1}(-h)$ is ample.
\end{proposition}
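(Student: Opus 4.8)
The plan is to reduce the statement to an analysis of the Néron--Severi lattice at a \emph{very general} point of~$\cM_h$. First I would show that for a very general $(X,\eta)\in\cM_h$, the algebraic lattice $\NS(X)=H^{1,1}(X)\cap H^2(X,\bZ)$ equals $\bZ\cdot\eta^{-1}(h)$. Indeed, a class $\eta^{-1}(v)$ with $v\in\Lambda$ is algebraic exactly when $(x,v)=0$ for the period $[x]=\wp(X,\eta)$; for each primitive $v$ not proportional to~$h$, the locus $\setmid{[x]\in\Omega_h}{(x,v)=0}$ is a proper closed analytic subset of~$\Omega_h$, and since~$\wp$ is surjective and a local isomorphism (local Torelli theorem), its preimage is a proper closed subset of~$\cM_h$. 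Removing this countable union leaves the very general locus, on which the only algebraic classes are multiples of $\eta^{-1}(h)$; as~$h$ is primitive in~$\Lambda$, we get $\NS(X)=\bZ\cdot\eta^{-1}(h)$.

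Next, since~$h$ has positive square, so does the generator $\eta^{-1}(h)$, so $\NS(X)$ contains a class of positive square and~$X$ is therefore projective by the standard projectivity criterion for hyperkähler manifolds. The lattice $\NS(X)$ has rank one and signature~$(1,0)$, so its positive cone consists of the two opposite rays spanned by $\eta^{-1}(h)$ and $-\eta^{-1}(h)$; the ample cone, being a nonempty open subcone, is exactly one of them. It remains to decide which sign is ample, and here I would use the orientation conventions. Choose an ample class $A\in\NS(X)$ and write $A=n\,\eta^{-1}(h)$ with $n\in\bZ$. Being ample, $A$ is a Kähler class, so by Proposition~\ref{prop:Oplus} the orientation of the positive three-space $\bC x\oplus\bR\eta(A)$ given by the basis $\set{\Re x,\Im x,\eta(A)}$ is the distinguished orientation class attached to the component $\cM^0_\marked$.

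Since $\eta(A)=nh$, this three-space is exactly $\bC x\oplus\bR h$, and for $[x]\in\Omega_h$ the distinguished orientation coincides by construction with the one given by $\set{\Re x,\Im x,h}$. Comparing the two bases $\set{\Re x,\Im x,nh}$ and $\set{\Re x,\Im x,h}$ of the same three-space forces $n>0$, whence $\eta^{-1}(h)=A/n$ is ample. The statement for~$\cM_{-h}$ follows by the identical argument, replacing~$h$ by~$-h$ and using that the distinguished orientation coincides with $\set{\Re x,\Im x,-h}$ for $[x]\in\Omega_{-h}$. I expect the main obstacle to be the orientation bookkeeping in this last step: one must keep careful track of how the distinguished orientation class of $\cM^0_\marked$ is normalized against the two components~$\Omega_h$ and~$\Omega_{-h}$, as this is precisely what distinguishes the ample sign on the two divisors. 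The reduction to rank-one Néron--Severi and the projectivity of~$X$ are comparatively routine.
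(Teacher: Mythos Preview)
Your proof is correct and follows essentially the same route as the paper: reduce to the very general locus where $\NS(X)=\bZ\cdot\eta^{-1}(h)$, observe that exactly one of $\pm\eta^{-1}(h)$ is ample, and decide the sign via the distinguished orientation class on the positive cone. The only cosmetic difference is that the paper invokes the equality of the K\"ahler cone with the positive cone in Picard rank one (\cite[Corollary~7.2]{Huybrechts:basic}) rather than the projectivity criterion you use, but the orientation bookkeeping is identical.
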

\begin{proof}
For a very general element~$(X,\eta)$ in~$\cM_{h}$ with period~$[x]\in\Omega_h$,
the Néron--Severi group is generated by the class~$H\coloneqq\eta^{-1}(h)$.
In this case the Kähler cone coincides with the positive
cone~\cite[Corollary~7.2]{Huybrechts:basic}.
Since~$h$ is primitive of positive square, this implies that either~$H$ or~$-H$
is ample. On the other hand,~$[x]$ lies in~$\Omega_h$, so the orientation class
given by~$\set{\Re x,\Im x,h}$ coincides with the distinguished one, which can
be given by~$\set{\Re x,\Im x,\eta(H')}$ for some Kähler class~$H'$. This
implies that only~$H$ can be ample.  By symmetry, we get the result for~$-h$.
\end{proof}
By removing the locus inside~$\cM_h$ where~$\eta^{-1}(h)$ is not ample, which
is a countable union of closed complex analytic subsets, we get the following result
\cite[Corollary 7.3]{Markman:survey}.
\begin{proposition}[Markman]
\label{prop:M_amp_dense}
Let~$\cM^\amp_{h}$ be the locus in~$\cM_{h}$ that consists of marked
pairs~$(X,\eta)$ such that~$\eta^{-1}(h)$ is ample. Then~$\cM^\amp_{h}$ is
connected and Hausdorff, and the marked period map $\wp$ restricts to an
injective map from $\cM^\amp_h$ onto a dense open subset of~$\Omega_h$ (in
the analytic topology).
\end{proposition}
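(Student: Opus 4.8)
The plan is to deduce the whole statement from two inputs: the local Torelli theorem, asserting that~$\wp$ is a local isomorphism, and Verbitsky's global Torelli theorem together with the standard description of its non-separation locus, asserting that two marked pairs sharing a period are bimeromorphic via a marking-compatible map. The first input yields all the topological assertions (openness of the image, the homeomorphism, connectedness), while the second yields injectivity and with it the Hausdorff property. The genuine hyperkähler geometry is concentrated in the injectivity step; everything else is formal once~$\wp$ is known to be a local isomorphism.

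First I would record that~$\cM^\amp_h$ is an open submanifold of~$\cM^0_\marked$ on which~$\wp$ is a local isomorphism. Since~$\Omega_h$ is a locally closed submanifold of~$\Omega_\marked$ (a connected component of the hyperplane section~$\Omega_\marked\cap h^\perp$), the preimage~$\cM_h=\wp^{-1}(\Omega_h)$ is a submanifold and~$\wp$ restricts to a local isomorphism~$\cM_h\to\Omega_h$. As ampleness is an open condition and the section~$\eta^{-1}(h)$ is locally constant across a marked family, the ample locus~$\cM^\amp_h\subseteq\cM_h$ is open, and~$\wp$ remains a local isomorphism on it.

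The key step is injectivity. Suppose~$(X,\eta)$ and~$(X',\eta')$ both lie in~$\cM^\amp_h$ and share a period~$[x]\in\Omega_h$. By the global Torelli theorem they are inseparable points of~$\cM^0_\marked$, which forces a bimeromorphic map~$f\colon X\dashrightarrow X'$ compatible with the markings, that is with~$\eta'\circ f_*=\eta$. Then~$H\coloneqq\eta^{-1}(h)$ is ample on~$X$,~$H'\coloneqq(\eta')^{-1}(h)$ is ample on~$X'$, and~$f_*$ carries~$H$ to~$H'$. Since the Kähler cones of distinct bimeromorphic models occupy disjoint chambers of the positive cone (Markman), a single class cannot be ample on two different models; hence~$f$ is an isomorphism of polarized pairs and~$(X,\eta)=(X',\eta')$ in the moduli space. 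Thus~$\wp|_{\cM^\amp_h}$ is injective. An injective local isomorphism is an open map that is a homeomorphism onto its (open) image, so~$\wp(\cM^\amp_h)$ is open in~$\Omega_h$ and, being homeomorphic to a subspace of the Hausdorff manifold~$\Omega_h$, the space~$\cM^\amp_h$ is Hausdorff.

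It remains to prove density and connectedness. Density follows from Proposition~\ref{prop:Omega_h}: every very general~$[x]\in\Omega_h$ is the period of a pair in~$\cM^\amp_h$. More precisely, the complement~$\Omega_h\setminus\wp(\cM^\amp_h)$ is exactly the set of periods for which~$h$ lies on a wall rather than in the interior of a Kähler chamber; since the positive cone is tiled by the Kähler chambers of the bimeromorphic models, this locus is contained in a countable union of proper closed analytic subsets of~$\Omega_h$, hence is thin. Its removal does not disconnect the connected manifold~$\Omega_h$, so~$\wp(\cM^\amp_h)$ is a connected dense open subset, and through the homeomorphism so is~$\cM^\amp_h$. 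The main obstacle is the injectivity step: it relies on the structure of the non-separated fibers of the global Torelli map and on the wall-and-chamber description of Kähler cones under bimeromorphic maps, where all the substantive input lies, whereas the surrounding topology is automatic.
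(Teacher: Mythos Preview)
The paper does not give a proof of this proposition; it is stated with attribution to Markman and cited as \cite[Corollary~7.3]{Markman:survey}, preceded only by the remark that the non-ample locus in~$\cM_h$ is a countable union of closed complex analytic subsets. Your sketch is essentially the argument one finds in Markman's survey, and it is correct in outline: injectivity via the bimeromorphic description of inseparable fibers plus the fact that a bimeromorphic map of hyperk\"ahler manifolds taking a K\"ahler class to a K\"ahler class is biregular, and then openness, Hausdorffness, density, and connectedness are formal.

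One minor imprecision worth flagging: the positive cone is \emph{not} tiled by the K\"ahler cones of the bimeromorphic models---only the closure of the birational K\"ahler cone (the movable cone, in the projective case) admits such a chamber decomposition. Fortunately your argument does not actually need this: for density and connectedness it suffices that the complement of~$\wp(\cM^\amp_h)$ in~$\Omega_h$ is contained in the locus of Picard rank at least~$2$, which is a countable union of hyperplane sections~$\Omega_h\cap\kappa^\perp$. That containment follows directly from Proposition~\ref{prop:Omega_h} and its proof (Picard rank one forces~$\eta^{-1}(h)$ to be ample), with no wall-and-chamber input required.
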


\begin{remark}
In Markman's survey, the domains~$\Omega_{h}$,~$\cM_{h}$,
and~$\cM^\amp_{h}$ are denoted as~$\Omega^+_{h^\perp}$, $\fM^+_{h^\perp}$,
and~$\fM^a_{h^\perp}$.  We believe our notation is simpler and better reflects
the symmetry between~$h$ and~$-h$: we may
identify~$\Omega^-_{h^\perp}=\Omega^+_{(-h)^\perp}$ as~$\Omega_{-h}$,
and~$\fM^-_{h^\perp}=\fM^+_{(-h)^\perp}$ as~$\cM_{-h}$.
\end{remark}

The connectedness of the locus~$\cM^\amp_h$ implies the following
result {\cite[Corollary 7.4]{Markman:survey}}, which determines whether two
polarized hyperkähler manifolds lie in the same connected component of the
polarized moduli space.
\begin{proposition}[Markman]
\label{prop:parallel}
A parallel transport operator
\[
f\colon H^2(X,\bZ)\simto H^2(X',\bZ)
\]
is a polarized parallel transport operator from $(X,H)$ to $(X',H')$ if and
only if $f(H)=H'$.
\end{proposition}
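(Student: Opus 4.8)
The forward (``only if'') implication is immediate from the definition. If $f$ arises from a family $\pi\colon\cX\to B$ carrying a section $h$ of ample classes and a path $\gamma$ from $b$ to $b'$ with $h(b)=H$ and $h(b')=H'$, then $h$ is in particular a flat section of the local system $R^2\pi_*\bZ$, so parallel transport along $\gamma$ sends $h(b)=H$ to $h(b')=H'$; that is, $f(H)=H'$. All the content lies in the converse.

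For the converse, the plan is to reduce everything to the connectedness of the ample locus furnished by Proposition~\ref{prop:M_amp_dense}. Fix a marking $\eta\colon H^2(X,\bZ)\simto\Lambda$ of $X$ and set $\eta'\coloneqq\eta\circ f^{-1}$, a marking of $X'$. Writing $h\coloneqq\eta(H)$, the hypothesis $f(H)=H'$ yields $\eta'(H')=\eta(f^{-1}(H'))=\eta(H)=h$, so both polarizations are carried to the \emph{same} primitive class $h\in\Lambda$ of positive square. Since $f$ is a parallel transport operator, it is realized by some family and path; transporting the marking $\eta$ along that path identifies $(X,\eta)$ and $(X',\eta')$ as endpoints of a path in $\cM_\marked$, so they lie in a common connected component, which we take to be $\cM^0_\marked$. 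Because $H$ and $H'$ are ample and both map to $h$ under their respective markings, the orientation conventions fixing $\Omega_h$ (the discussion preceding Proposition~\ref{prop:Omega_h}) place both periods in $\Omega_h$ rather than $\Omega_{-h}$; hence $(X,\eta)$ and $(X',\eta')$ both belong to the ample locus $\cM^\amp_h$.

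Now I would invoke Proposition~\ref{prop:M_amp_dense}: the locus $\cM^\amp_h$ is connected, so there is a path joining $(X,\eta)$ to $(X',\eta')$ inside it, along which the class $\eta^{-1}(h)$ is ample at every point. It remains to upgrade this path into an honest family of polarized hyperkähler manifolds realizing $f$. To do so I would cover the compact image of the path by finitely many charts of $\cM_\marked$, over each of which a Kuranishi family exists; by the universal property of these local families they glue compatibly with the marking into a smooth proper family $\pi\colon\cX\to B$ over a neighborhood $B$ of the path. The marking trivializes $R^2\pi_*\bZ$, and the constant section $\eta^{-1}(h)$ then defines a flat section $h$ of classes that is ample all along $\gamma$, with endpoints $H$ and $H'$. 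The parallel transport of this family along $\gamma$ is $(\eta')^{-1}\circ\eta=f$, which exhibits $f$ as a polarized parallel transport operator from $(X,H)$ to $(X',H')$.

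The main obstacle is this last step: manufacturing a genuine family from a path in the (possibly non-Hausdorff) marked moduli space, while controlling ampleness of the tautological section along the \emph{whole} path rather than at isolated points, and gluing the local Kuranishi families so that the resulting parallel transport is exactly $f$ and not merely some conjugate of it. All of this is available once one knows that $\cM_\marked$ is locally the base of a universal family, which is part of the standard construction; the essential geometric input that powers the argument is the connectedness of $\cM^\amp_h$.
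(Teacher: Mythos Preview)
Your proposal is correct and follows precisely the approach the paper indicates: the paper does not give its own proof here but cites Markman, prefacing the proposition with the remark that ``the connectedness of the locus~$\cM^\amp_h$ implies the following result,'' which is exactly the mechanism you have unpacked. Your construction of the compatible markings $\eta,\eta'$ placing both pairs in $\cM^\amp_h$, followed by invoking Proposition~\ref{prop:M_amp_dense} and gluing local Kuranishi families along the resulting path, is the standard argument; the Hausdorffness of $\cM^\amp_h$ (also part of Proposition~\ref{prop:M_amp_dense}) handles the concern you raised about the non-Hausdorff ambient space.
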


\begin{definition}
We fix one connected component~$\cM_\marked^0$ of the marked moduli
space $\cM_\marked$ as before. Given a polarized pair~$(X,H)$, choose a
marking~$\eta$ such that~$(X,\eta)$ lies in~$\cM_\marked^0$. We define the
\emph{polarization type}~$T$ of~$(X,H)$ to be the~$\O(\Lambda)$-orbit
of~$\eta(H)$ in~$\Lambda$. We also denote by~$\tau$
the~$\Mon(\Lambda)$-orbit of~$\eta(H)$ in~$\Lambda$, which is contained in~$T$.
This orbit is clearly constant on each connected component~$\cM^0_T$
of~$\cM_T$, so we have a map
\begin{equation}\label{eq:bijection}
\set{\text{connected components of }\cM_T}\to\set{\Mon(\Lambda)\text{-orbits
contained in }T},
\end{equation}
which may depend on the initial choice of the connected
component~$\cM_\marked^0$.  We will call the orbit~$\tau$ the \emph{deformation
type} of~$(X,H)$.
\end{definition}

Proposition~\ref{prop:parallel} can be used to show that the deformation type defined
here is the good notion.  More precisely, we have the following result.

\begin{proposition}\label{prop:deftype}
Let~$T$ be a polarization type, in other words, an~$\O(\Lambda)$-orbit of
a primitive element of positive square. The map \eqref{eq:bijection} above
gives a bijection from the set of connected components of~$\cM_T$ to the set
of~$\Mon(\Lambda)$-orbits contained in~$T$.
\end{proposition}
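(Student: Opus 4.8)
The plan is to reduce everything to Proposition~\ref{prop:parallel} together with a dictionary relating the fixed component~$\cM_\marked^0$ to the monodromy group; the well-definedness of the map \eqref{eq:bijection} is already observed, so it remains to prove surjectivity and injectivity. The preliminary fact I would establish is this dictionary: if $(X,\eta)$ and $(X',\eta')$ both lie in~$\cM_\marked^0$, then for every $k\in\Mon(\Lambda)$ the isometry ${\eta'}^{-1}\circ k\circ\eta\colon H^2(X,\bZ)\simto H^2(X',\bZ)$ is a parallel transport operator. For $k=\mathrm{id}$ this is just the statement that a path in~$\cM_\marked^0$ joining $(X,\eta)$ to $(X',\eta')$ induces, after forgetting the markings, the parallel transport operator ${\eta'}^{-1}\circ\eta$ (the markings being the common flat trivialization of $R^2\pi_*\bZ$ along the path). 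The general case follows by writing ${\eta'}^{-1}\circ k\circ\eta=({\eta'}^{-1}\circ k\circ\eta')\circ({\eta'}^{-1}\circ\eta)$, noting that ${\eta'}^{-1}\circ k\circ\eta'\in\Mon(X')$ is a monodromy operator of~$X'$, and using that a composite of parallel transport operators is again one. Normality of~$\Mon(\Lambda)$ guarantees that this identification is independent of the chosen markings.

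For surjectivity, let~$\tau\subseteq T$ be a $\Mon(\Lambda)$-orbit and pick $h\in\tau$; since $\tau\subseteq T$, the vector~$h$ is primitive of positive square. I would invoke Proposition~\ref{prop:M_amp_dense} to produce a marked pair $(X,\eta)\in\cM^\amp_h\subseteq\cM_\marked^0$, so that $H\coloneqq\eta^{-1}(h)$ is ample by Proposition~\ref{prop:Omega_h}. Then $(X,H)$ is a polarized pair of type~$T$ whose image under the map is the $\Mon(\Lambda)$-orbit of $\eta(H)=h$, namely~$\tau$; hence~$\tau$ is hit.

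For injectivity, suppose two components~$\cM^0_T$ and~${\cM'}^0_T$ map to the same orbit~$\tau$. Choose $(X,H)\in\cM^0_T$ and $(X',H')\in{\cM'}^0_T$ with markings $\eta,\eta'$ placing them in~$\cM_\marked^0$, so $\eta(H),\eta'(H')\in\tau$; pick $g\in\Mon(\Lambda)$ with $g(\eta(H))=\eta'(H')$. By the dictionary, $f\coloneqq{\eta'}^{-1}\circ g\circ\eta$ is a parallel transport operator from~$X$ to~$X'$, and by construction $f(H)={\eta'}^{-1}(g(\eta(H)))=H'$. Proposition~\ref{prop:parallel} then upgrades~$f$ to a polarized parallel transport operator from $(X,H)$ to $(X',H')$, so the two pairs lie in the same component of~$\cM_T$; that is, $\cM^0_T={\cM'}^0_T$.

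The main obstacle is not any single deep input—Proposition~\ref{prop:parallel} does the heavy lifting—but rather the careful bookkeeping of markings needed to set up the dictionary correctly. In particular one must check that two markings of the same manifold lying in~$\cM_\marked^0$ differ by an element of~$\Mon(\Lambda)$, that composites of parallel transport operators are again parallel transport operators, and that the conjugation identifying~$\Mon(X)$ with~$\Mon(\Lambda)$ is independent of the marking, which is exactly where the normality assumption on~$\Mon(\Lambda)$ enters. Once this dictionary is in place, both surjectivity and injectivity are short.
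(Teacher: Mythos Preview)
Your proof is correct and follows essentially the same route as the paper: both arguments use non-emptiness of~$\cM^\amp_h$ for surjectivity and construct the parallel transport operator ${\eta'}^{-1}\circ\phi\circ\eta$ via Proposition~\ref{prop:parallel} for injectivity. The only difference is cosmetic: the paper argues in one line that $(X,\phi\circ\eta)$ still lies in~$\cM^0_\marked$ (since $\Mon(\Lambda)$ acts on this component) and hence ${\eta'}^{-1}\circ\phi\circ\eta$ is a parallel transport operator, whereas you unpack this into an explicit ``dictionary'' via the decomposition $({\eta'}^{-1}\circ k\circ\eta')\circ({\eta'}^{-1}\circ\eta)$---same content, more bookkeeping spelled out.
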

\begin{proof}
For the injectivity, suppose that two polarized pairs~$(X,H)$ and~$(X',H')$
have the same deformation type, which means that we may choose markings~$\eta$
and~$\eta'$ such that~$(X,\eta)$ and~$(X',\eta')$ both lie in the fixed
connected component~$\cM_\marked^0$, and~$\eta(H)$ and~$\eta'(H')$ have the
same~$\Mon(\Lambda)$-orbit in~$\Lambda$. We want to show that~$(X,H)$
and~$(X',H')$ lie in the same connected component of~$\cM_T$.

Suppose that there exists some $\phi\in\Mon(\Lambda)$ such that
$\phi\circ\eta(H)=\eta'(H')$. By the definition of~$\Mon(\Lambda)$, the
marking~$(X,\phi\circ\eta)$ is also in~$\cM_\marked^0$. The isomorphism
$\eta'^{-1}\circ\phi\circ\eta$ is a parallel transport operator that takes~$H$
to~$H'$ so, by Proposition~\ref{prop:parallel}, it is a polarized one, that is,
$(X,H)$ and $(X',H')$ are indeed connected by some path in the polarized moduli
space~$\cM_T$.

For the surjectivity, since the locus~$\cM^\amp_h$ is non-empty for
every~$h\in T$, the class~$h$ can always be realized as the image~$\eta(H)$ for
some polarized pair~$(X,H)$ and a marking~$\eta$ with~$(X,\eta)$ lying in the
fixed connected component~$\cM^0_\marked$. This in particular means that
every~$\Mon(\Lambda)$-orbit can be realized as the deformation type of some
polarized pair.
\end{proof}

So for a given polarization type~$T$, once we picked a connected
component~$\cM^0_\marked$, we can distinguish each connected
component~$\cM^0_T$ of~$\cM_T$ by its deformation type~$\tau$. We can thus
write~$\cM_\tau$ instead of~$\cM^0_T$.

A first observation is that, if the group~$\Mon(\Lambda)$ is a proper subgroup
of~$\O(\Lambda)$, an~$\O(\Lambda)$-orbit may contain
several~$\Mon(\Lambda)$-orbits and consequently, the corresponding polarized
moduli space~$\cM_T$ may have several components. As the result of
Apostolov~\cite{Apostolov} shows, this is indeed the case for certain polarization
types of~$\KKK^{[m]}$-type manifolds. We will give a simplified expression for the
exact number of components in Proposition~\ref{prop:numberoftau}.

Finally, we explain the construction of the polarized period map and the
statement of the polarized global Torelli theorem, as mentioned in the
introduction. For a polarization type~$T$, we consider the connected
component~$\cM_T^0=\cM_\tau$ of the polarized moduli space~$\cM_T$
corresponding to a $\Mon(\Lambda)$-orbit~$\tau$ and pick some~$h\in\tau$.  We
consider the stabilizer groups
\[
\O(\Lambda,h)\coloneqq\setmid{\phi\in\O(\Lambda)}{\phi(h)=h}
\quad\text{and}\quad
\Mon(\Lambda,h)\coloneqq\Mon(\Lambda)\cap \O(\Lambda,h).
\]

For a polarized pair~$(X,H)$ of deformation type~$\tau$, if we pick a suitable
marking~$\eta$ in the connected component~$\cM^0_\marked$ such
that~$\eta(H)=h$ then, by the ampleness of the class~$H$, the marked
pair~$(X,\eta)$ must lie in~$\cM^\amp_h$. By quotienting out the action of the
monodromy group, we get the following result \cite[Lemma 8.1, Lemma 8.3, and
Theorem 8.4]{Markman:survey}.

\begin{theorem}[Markman]\leavevmode
\begin{itemize}
\item[(i)] The marked period map
\eqref{eq:marked-period} descends to an open embedding of analytic spaces
\[
\cM_h^\amp/\Mon(\Lambda,h)\into \Omega_h/\Mon(\Lambda,h),
\]
where the second quotient~$\Omega_h/\Mon(\Lambda,h)$ is a normal
quasi-projective variety by Baily--Borel theory. We denote this quotient
by~$\cP_\tau$, since if we choose another~$h'\in\tau$, the two quotients are
canonically isomorphic.
\item[(ii)]
For each~$h\in\tau$, there is an isomorphism of analytic spaces
\[
\cM_\tau\simto\cM^\amp_h/\Mon(\Lambda,h).
\]
The composition with the above embedding gives the polarized period map
\[
\wp_\tau\colon\cM_\tau\into\cP_\tau,
\]
which is an open immersion of algebraic varieties.\qedhere
\end{itemize}
\end{theorem}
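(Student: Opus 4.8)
The plan is to build everything on the single open embedding furnished by Proposition~\ref{prop:M_amp_dense} and then to descend it along the action of $\Mon(\Lambda,h)$. First I would record that, since ampleness is an open condition (the non-ample locus being a locally finite, hence closed, union of analytic subsets), $\cM^\amp_h$ is open in $\cM_h$; combined with Proposition~\ref{prop:M_amp_dense} and the local Torelli theorem (the marked period map is a local isomorphism), this upgrades the injection of Proposition~\ref{prop:M_amp_dense} to an \emph{open embedding} $\wp\colon\cM^\amp_h\into\Omega_h$ with dense image. The group $\Mon(\Lambda,h)$ acts on the source by $\phi\cdot(X,\eta)=(X,\phi\circ\eta)$: this preserves the chosen component $\cM^0_\marked$ (as $\phi$ is a monodromy operator), fixes the ample class $\eta^{-1}(h)$ (since $\phi^{-1}(h)=h$), and hence preserves $\cM^\amp_h$. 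On the target it acts linearly on $h^\perp$, and because $\Mon(\Lambda,h)\subseteq\O^+(\Lambda)$ by Proposition~\ref{prop:Oplus} it has trivial spinor norm, so it preserves the component $\Omega_h$ instead of swapping it with $\Omega_{-h}$. The period map is tautologically equivariant for these two actions.

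To prove (i), I would first identify $\Omega_h$ as one of the two connected components of the period domain attached to the lattice $h^\perp$, which has signature $(2,b_2-3)$ and thus yields a Hermitian symmetric domain of type IV. The group $\Mon(\Lambda,h)$ is arithmetic (of finite index in $\O(\Lambda,h)$), so Baily--Borel theory gives that $\cP_\tau\coloneqq\Omega_h/\Mon(\Lambda,h)$ is a normal quasi-projective variety; in particular the action is properly discontinuous and the quotient map is open. Descending the equivariant open embedding then yields the desired open embedding of quotients: injectivity follows from equivariance together with the injectivity of $\wp$ on $\cM^\amp_h$ (if $\wp(m_1)=\phi\cdot\wp(m_2)=\wp(\phi\cdot m_2)$ with $\phi\in\Mon(\Lambda,h)$, then $m_1=\phi\cdot m_2$); openness of the image follows since the quotient map is open; and the map is a local isomorphism because the map upstairs is one and both actions are properly discontinuous. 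Finally, for another $h'\in\tau$ I would pick $\phi\in\Mon(\Lambda)$ with $\phi(h)=h'$; it carries $\Omega_h$ to $\Omega_{h'}$ (again by the spinor-norm bookkeeping) and conjugates $\Mon(\Lambda,h)$ onto $\Mon(\Lambda,h')$, hence descends to an isomorphism of the two quotients, canonical because two such $\phi$ differ by an element of $\Mon(\Lambda,h)$, which acts trivially downstairs.

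For (ii), I would identify $\cM_\tau$ with $\cM^\amp_h/\Mon(\Lambda,h)$ through the forgetful assignment $(X,\eta)\mapsto(X,\eta^{-1}(h))$. This lands in $\cM_\tau$ (the class $\eta^{-1}(h)$ is ample, and its deformation type is the $\Mon(\Lambda)$-orbit $\tau$ of $h$ because $(X,\eta)\in\cM^0_\marked$), and it is constant on $\Mon(\Lambda,h)$-orbits, so it descends to a map $\cM^\amp_h/\Mon(\Lambda,h)\to\cM_\tau$. Its surjectivity is exactly the surjectivity established in the proof of Proposition~\ref{prop:deftype}, while its injectivity reduces to Proposition~\ref{prop:parallel}: if two marked pairs in $\cM^\amp_h$ yield isomorphic polarized pairs, the isomorphism induces a parallel transport operator matching the two polarizations (both equal to $h$ after marking), hence a \emph{polarized} one, which places the two marked points in a single $\Mon(\Lambda,h)$-orbit. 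Promoting this bijection to an isomorphism of analytic spaces uses that the forgetful map from marked to unmarked pairs is a local isomorphism, so $\cM_\tau$ carries the same local model as $\cM^\amp_h$. Composing the isomorphism of (ii) with the open embedding of (i) gives $\wp_\tau$; it is an open immersion of analytic spaces, and since both spaces are quasi-projective ($\cP_\tau$ by Baily--Borel, $\cM_\tau$ as a component of $\cM_T$) and the period map is an algebraic morphism, it is an open immersion of algebraic varieties.

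The main obstacle, to my mind, is the passage in (i) from a set-theoretic bijection to a genuine open embedding of analytic spaces: one must know that $\Mon(\Lambda,h)$ acts properly discontinuously on $\Omega_h$, so that the quotient is a reasonable (orbifold) analytic space, the quotient map is open, and the descended map is a local isomorphism rather than merely a continuous injection. This is precisely where arithmeticity of $\Mon(\Lambda,h)$ and Baily--Borel theory are indispensable. A secondary but genuine subtlety is the orientation bookkeeping, resting on Proposition~\ref{prop:Oplus}, which guarantees that $\Mon(\Lambda,h)$ stabilizes $\Omega_h$ and that the comparison isometry $\phi$ matches $\Omega_h$ with $\Omega_{h'}$ rather than with $\Omega_{-h'}$.
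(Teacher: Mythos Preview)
The paper does not give its own proof of this theorem: it is stated as a result of Markman with a citation to \cite[Lemma~8.1, Lemma~8.3, and Theorem~8.4]{Markman:survey}, and no argument follows. So there is nothing in the paper to compare your proposal against directly. That said, your reconstruction is essentially the argument Markman gives, built on Proposition~\ref{prop:M_amp_dense} together with equivariance of the marked period map under $\Mon(\Lambda,h)$, and it is correct in outline.

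Two small points could be tightened. In the injectivity step of (ii), invoking Proposition~\ref{prop:parallel} is a detour: an isomorphism $f\colon X\simto X'$ of polarized pairs already furnishes a parallel transport operator $f^*$, and since $(X',\eta\circ f^*)$ and $(X,\eta)$ represent the same point of $\cM^0_\marked$, the element $\phi\coloneqq\eta\circ f^*\circ\eta'^{-1}$ lies in $\Mon(\Lambda)$ and visibly fixes $h$, placing the two marked pairs in one $\Mon(\Lambda,h)$-orbit without needing the polarized statement. Second, the final assertion that $\wp_\tau$ is algebraic is not a formality: the passage from a holomorphic open immersion into an arithmetic quotient to an algebraic morphism requires Borel's extension theorem, which is worth naming explicitly rather than folding into ``the period map is an algebraic morphism''.
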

Notice that if~$\tau$ and~$\tau'$ are different~$\Mon(\Lambda)$-orbits
contained in~$T$, the quotients~$\cP_\tau$ and~$\cP_{\tau'}$ are isomorphic
but in general not canonically. This can be seen as follows. We consider the
quotient~$(\Omega_h\sqcup\Omega_{-h}) /\O(\Lambda,h)\simeq
\Omega_h/\O^+(\Lambda,h)$, which is again a normal quasi-projective variety.
This quotient can be denoted by~$\cP_T$, since if another~$h'\in T$ is chosen,
the two quotients are canonically isomorphic.  We see that~$\cP_\tau$ is a
covering space of~$\cP_T$ and it admits an action of the
group~$\O^+(\Lambda,h)/ \Mon(\Lambda,h)$, not necessarily free. The
deck transformation group~$G$ will be some quotient of this group. Thus we
have a diagram
\begin{equation}\label{eq:polarized-period}
\begin{tikzcd}[column sep=0]
\wp_\tau\colon\cM_\tau\rar[hook]&[5em]{\cP_\tau}\dar{/G}&=
&\Omega_h/\Mon(\Lambda,h)\\
&{\cP_T}&=&\Omega_h/\O^+(\Lambda,h)
\end{tikzcd}
\end{equation}
In particular, when~$G$ is non-trivial, for two deformation types~$\tau$
and~$\tau'$, there is no canonical isomorphism between the period
domains~$\cP_\tau$ and~$\cP_{\tau'}$: any two such isomorphisms differ by the
action of an element in~$G$ (to be more precise, in this case we have two
groups~$G_\tau$ and~$G_{\tau'}$ that are non-canonically isomorphic).

\begin{remark}
For K3 surfaces, the monodromy group~$\Mon(\Lambda)$ coincides
with~$\O^+(\Lambda)$, and each polarization is characterized by its
square~$2d$. Each period domain~$\cP_T=\cP_{2d}$ is given above as the
quotient~$(\Omega_h\sqcup \Omega_{-h})/\O(\Lambda,h)$. This is usually
formulated in terms of the orthogonal lattice~$h^\perp$: the hyperplane
section~$(\Omega_h\sqcup \Omega_{-h})$ can be identified as the following space
\[
\Omega_{h^\perp}\coloneqq\setmid{[x]\in\bP\big((h^\perp)_\bC\big)}{(x,x)=0,
(x,\bar x)>0},
\]
and by Proposition~\ref{prop:ghs3.4} below, the group~$\O(\Lambda,h)$ restricts
to a subgroup~$\tO(h^\perp)$ of~$\O(h^\perp)$, so~$\cP_{2d}$ can also be given
as the quotient~$\Omega_{h^\perp}/\tO(h^\perp)$.
\end{remark}

\begin{remark}
Another subtlety is that the polarized period map depends on the initial choice
of the connected component~$\cM_\marked^0$ for the definition of deformation
types: if we choose another connected component by acting on the marking using
an element in $\Mon(\Lambda)\cdot\O(\Lambda,h)$, the deformation
type---the~$\Mon(\Lambda)$-orbit---of~$\cM_T^0$ is still~$\tau$, but the period
map is acted on by some element in~$G$; if we choose another connected
component by acting on the marking using an element in the larger
group~$\O(\Lambda)$, the deformation type of~$\cM_T^0$ may change to an
entirely different~$\tau'$, in which case the period map maps the
component~$\cM_T^0$ to a different~$\cP_{\tau'}$ that, as we already stated,
can only be identified with~$\cP_\tau$ up to the action of some element in~$G$.
In Markman's survey, this subtlety is handled by taking disjoint copies
of~$\cM^\amp_h$ (resp.~$\Omega_h$) and by quotienting out by the
action of~$\O(\Lambda)$ to get a canonically defined polarized moduli space
(resp.~polarized period domain). This approach is certainly more canonical as
it does not depend on the particular choice of a connected
component~$\cM^0_\marked$.  However, it is more difficult to describe the
connected components of~$\cM_T$ in this setting.
\end{remark}

Before ending this section, we review some lattice theoretical results that
will be used later.  We first recall some basic definitions. Let~$\Lambda$
be a lattice with isometry group~$\O(\Lambda)$. The
\emph{divisibility}~$\div(x)$ of a primitive element~$x$ in~$\Lambda$ is the
positive generator~$\gamma$ of the subgroup~$(x,\Lambda)$ of~$\bZ$. The
\emph{discriminant group} of~$\Lambda$ is the finite abelian
group~$D(\Lambda)\coloneqq \Lambda^\vee/\Lambda$. We
define~$x_*\coloneqq[x/\div(x)]$, which is an element of~$D(\Lambda)$ of
order~$\div(x)$. When~$\Lambda$ is even, the quadratic form
on~$\Lambda$ induces a~$(\bQ/2\bZ)$-valued quadratic form on~$D(\Lambda)$, and
there is a natural homomorphism~$\chi\colon\O(\Lambda)\to\O(D(\Lambda))$. In
this case, we let~$\tO(\Lambda)$ and~$\hO(\Lambda)$ be the respective preimages
of~$\set1$ and~$\set{\pm1}$ by~$\chi$. We have the following results from
lattice theory.

\begin{proposition}[{\cite[Theorem 1.14.2]{Nikulin}}]\label{prop:chisurj}
For any even indefinite lattice~$\Lambda$ of rank larger than or equal to the
minimal number of generators of~$D(\Lambda)$ plus~$2$, the
homomorphism~$\chi\colon\O(\Lambda)\to\O(D(\Lambda))$ is surjective.
\end{proposition}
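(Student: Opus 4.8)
Since this is Nikulin's theorem, I will describe a strategy rather than grind through the local computations. Write $q_\Lambda$ for the induced quadratic form on $D(\Lambda)$ and $\ell=\ell(D(\Lambda))$ for its minimal number of generators, so by hypothesis $\Lambda$ is even and indefinite of rank $r\geq \ell+2$. The plan is to realize an arbitrary $\bar\gamma\in\O(D(\Lambda))$ \emph{geometrically}, by comparing two primitive embeddings of $\Lambda$ into one fixed even unimodular lattice. Let $\Lambda(-1)$ denote $\Lambda$ with the form negated, so that its discriminant form is $(D(\Lambda),-q_\Lambda)$. For any $\psi\in\O(D(\Lambda))$ the graph $\Gamma_\psi=\setmid{(x,\psi(x))}{x\in D(\Lambda)}$ is isotropic for $q_\Lambda\oplus(-q_\Lambda)$, and the corresponding overlattice $N_\psi$ of $\Lambda\oplus\Lambda(-1)$ is even and unimodular, with $\Lambda$ and $\Lambda(-1)$ primitively embedded as mutual orthogonal complements, glued along $\psi$.

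First I would note that $N_\psi$ has signature $(r,r)$ regardless of $\psi$, so by Milnor's classification of indefinite even unimodular lattices every $N_\psi$ is isometric to $U^{\oplus r}$ (the hyperbolic plane taken $r$ times); in particular $N_{\operatorname{id}}\cong N_{\bar\gamma}$. Transporting along such isometries, I obtain inside the single lattice $N\coloneqq U^{\oplus r}$ two primitive embeddings of $\Lambda$ with common orthogonal complement $\Lambda(-1)$, differing only in that their gluing maps are $\operatorname{id}$ and $\bar\gamma$. The key step is then to produce $g\in\O(N)$ carrying the first embedding onto the second while acting trivially on the discriminant group of the complement; such a $g$ necessarily preserves $\Lambda$ and restricts there to an isometry $\phi$ whose induced action on $D(\Lambda)$ is exactly $\bar\gamma$ (or its inverse, which is harmless since $\bar\gamma$ was arbitrary). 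This gives $\chi(\phi)=\bar\gamma$ and hence surjectivity.

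The main obstacle is precisely this last step: it is a uniqueness statement for primitive embeddings of $\Lambda$ into the unimodular lattice $N$, and it is here that the bound $r\geq\ell+2$ together with indefiniteness is used, to guarantee enough room in the complement to slide one embedding onto the other. It is tempting to quote Nikulin's general uniqueness theorem for primitive embeddings, but that result is itself deduced from the present statement, so to avoid circularity I would argue directly. Since $N=U^{\oplus r}$ splits off many hyperbolic planes, Eichler's criterion applies: the $\tO(N)$-orbit of a primitive vector is controlled by its square and its image in the (trivial) discriminant group of $N$. Inducting on $\operatorname{rank}\Lambda$ — align a primitive vector of the first embedding with its counterpart in the second by an Eichler transvection, then pass to the orthogonal complement, which still splits off two hyperbolic planes by the rank bound, and repeat — one matches the two embeddings of $\Lambda$ up to an isometry of the complement. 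Because the transvections used lie in the stable orthogonal group, they act trivially on the discriminant data of the complement, and the compatibility of the two gluings then forces $\chi(\phi)=\bar\gamma$.

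The careful bookkeeping in that induction is the genuinely technical part, and it is where the exact rank inequality earns its keep: at each stage one needs a lattice vector realizing a prescribed class in the discriminant group while keeping two hyperbolic planes in reserve, and $r\geq\ell+2$ is exactly the budget that makes this possible throughout. As a variant I would keep in mind the purely local approach — decomposing $\O(D(\Lambda))=\prod_p\O(q_{\Lambda,p})$ into $p$-primary factors and lifting a fixed generating set of each factor one generator at a time — which is the route that most directly exhibits the numerical role of $\ell$, and which can be used to cross-check the embedding argument.
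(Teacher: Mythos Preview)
The paper does not give a proof of this statement: it is quoted from Nikulin with a bare citation and used as a black box throughout. So there is no in-paper argument to compare your sketch against, and the question is whether your outline stands on its own.

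The doubling construction---gluing $\Lambda$ to $\Lambda(-1)$ along the graph of $\bar\gamma$, invoking Milnor to identify every $N_\psi$ with $U^{\oplus r}$, and then extracting the lift from an isometry of $U^{\oplus r}$---is a legitimate and well-known route, and you correctly isolate the crux (uniqueness of the primitive embedding) together with the circularity hazard in citing Nikulin's own uniqueness theorem for it. But the proposed Eichler induction does not run as written. After aligning $\iota_1(e_1)$ with $\iota_2(e_1)$ you cannot simply ``pass to the orthogonal complement'': the remaining vectors $\iota_k(e_i)$ for $i\ge 2$ have no reason to lie in $e_1^\perp$, so the next move must take place inside the stabilizer $\O(N,e_1)$, and Eichler's criterion for orbits in $e_1^\perp$ does not directly control that action on vectors outside $e_1^\perp$. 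Your assertion that the successive complements ``still split off two hyperbolic planes by the rank bound'' is exactly the point that needs proof, and $r\ge\ell+2$ alone does not furnish it step by step. (There is also a minor slip: the $g$ you build does not ``preserve $\Lambda$''---it carries one embedded copy to another---and the lift of $\bar\gamma$ is read off by comparing the induced maps on \emph{both} factors, using $\O(\Lambda(-1))=\O(\Lambda)$, not from a single restriction.) In Nikulin's own treatment the hypothesis $r\ge\ell+2$ enters instead through his genus-uniqueness statements (his Theorem~1.13.3 and the splitting $\Lambda\cong U\oplus K$ of Corollary~1.13.5), which ultimately rest on strong approximation; once those are in hand the surjectivity of $\chi$ follows without the vector-by-vector alignment. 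The $p$-local variant you mention at the end is in fact closer in spirit to how the bound is actually used.
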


\begin{proposition}[{\cite[Lemma 3.2]{GHS:ihs}}]\label{prop:ghs3.4}
Let~$\Lambda$ be an even unimodular lattice and let~$x$ be an
element~of~$\Lambda$ with non-zero square. Denote by~$x^\perp$ the
orthogonal of~$x$ in~$\Lambda$.  We have
\[
\O(\Lambda,x)|_{x^\perp}=\tO(x^\perp),
\]
where~$\O(\Lambda,x)$ is the stabilizer group of~$x$ in~$\O(\Lambda)$.
\end{proposition}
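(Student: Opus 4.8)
The plan is to invoke Nikulin's theory of discriminant forms and gluing \cite{Nikulin}. First I would reduce to the case where $x$ is primitive: if $x_0$ denotes the primitive vector on the ray through $x$, then $\phi(x)=x$ is equivalent to $\phi(x_0)=x_0$, so $\O(\Lambda,x)=\O(\Lambda,x_0)$, while $x^\perp=x_0^\perp$; thus we may assume $x$ primitive. Set $K\coloneqq\bZ x$ and $M\coloneqq x^\perp$. Both are nondegenerate since $(x,x)\neq0$; $K$ is primitive by assumption, and $M$ is primitive as an orthogonal complement (if $n\lambda\in M$ then $n(\lambda,x)=0$, so $\lambda\in M$). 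Hence $K\oplus M$ is a finite-index sublattice of $\Lambda$, and $\Lambda$ sits between $K\oplus M$ and its dual $K^\vee\oplus M^\vee$. Because $\Lambda$ is unimodular, Nikulin's gluing description applies: the image of $\Lambda/(K\oplus M)$ in $D(K)\oplus D(M)$ is the graph $\Gamma_\gamma=\setmid{(a,\gamma(a))}{a\in D(K)}$ of an anti-isometry $\gamma\colon D(K)\simto D(M)$, which is an isomorphism because the primitivity of $K$ and of $M$ makes both projections of $\Gamma_\gamma$ injective while $[\Lambda:K\oplus M]^2=|D(K)|\,|D(M)|$ forces equality of the orders.

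For the inclusion $\O(\Lambda,x)|_{x^\perp}\subseteq\tO(M)$, take $\phi\in\O(\Lambda,x)$. It fixes $x$, hence preserves $M=x^\perp$; write $\psi\coloneqq\phi|_M\in\O(M)$. Since $\phi$ fixes $K$ pointwise, its induced action on $D(K)\oplus D(M)$ is $\mathrm{id}_{D(K)}\oplus\bar\psi$, where $\bar\psi\in\O(D(M))$ is the map induced by $\psi$. As $\phi$ preserves $\Lambda$, this action preserves the isotropic subgroup $\Gamma_\gamma$, so for every $a\in D(K)$ the element $(a,\bar\psi(\gamma(a)))$ lies in $\Gamma_\gamma$, forcing $\bar\psi(\gamma(a))=\gamma(a)$. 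Since $\gamma$ is surjective, $\bar\psi=\mathrm{id}$, that is, $\psi\in\tO(M)$. For the reverse inclusion, take $\psi\in\tO(M)$ and form $\phi\coloneqq\mathrm{id}_K\oplus\psi$, an isometry of $K\oplus M$ extending over $\bQ$ to $K^\vee\oplus M^\vee$. Its induced action on $D(K)\oplus D(M)$ is $\mathrm{id}\oplus\bar\psi=\mathrm{id}$, which fixes $\Gamma_\gamma$; hence $\phi$ preserves the intermediate lattice $\Lambda$. Thus $\phi\in\O(\Lambda)$ with $\phi(x)=x$ and $\phi|_M=\psi$, giving $\psi\in\O(\Lambda,x)|_{x^\perp}$.

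The only genuine work lies in the first paragraph, namely installing the gluing correspondence: verifying that an overlattice of $K\oplus M$ inside $K^\vee\oplus M^\vee$ corresponds to an isotropic subgroup of $D(K)\oplus D(M)$, that unimodularity of $\Lambda$ makes this subgroup the graph of an \emph{isomorphism} $\gamma$, and that an isometry of $K\oplus M$ extends to $\Lambda$ precisely when its induced map on $D(K)\oplus D(M)$ preserves that graph. Once this standard Nikulin machinery is in place, both inclusions become immediate, the decisive point being that any $\phi\in\O(\Lambda,x)$ acts trivially on the $D(K)$-factor (as it fixes $x$), so that preservation of $\Gamma_\gamma$ collapses to the single condition $\bar\psi=\mathrm{id}$.
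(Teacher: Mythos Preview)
The paper does not supply its own proof of this proposition; it is stated as a citation of \cite[Lemma~3.2]{GHS:ihs} and then used as a black box. So there is no in-paper argument to compare against.

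Your proof is correct and is essentially the standard Nikulin gluing argument that one finds behind such statements (and is the approach taken in the cited source). The reduction to primitive $x$ is fine; the key structural facts you use---that primitivity of $K$ and $M$ forces the isotropic subgroup $\Lambda/(K\oplus M)\subset D(K)\oplus D(M)$ to project injectively to both factors, and that unimodularity of $\Lambda$ forces $|\Gamma_\gamma|^2=|D(K)|\cdot|D(M)|$, hence $\gamma$ is a bijection---are exactly right. Both inclusions then follow as you say: the forward one because fixing $x$ trivializes the action on $D(K)$, and the reverse one because $\mathrm{id}_K\oplus\psi$ with $\bar\psi=\mathrm{id}$ trivially preserves $\Gamma_\gamma$. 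One small remark: you might note that $K=\bZ x$ is even because $\Lambda$ is even (so $(x,x)\in2\bZ$), which is needed for the discriminant \emph{form} (not just group) machinery to apply cleanly; but this is implicit in the hypothesis.
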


\begin{proposition}[Eichler's criterion, {\cite[Lemma 3.5]{GHS:ihs}}]
\label{prop:eichler}
Let~$\Lambda$ be an even lattice which contains at least two orthogonal copies
of the hyperbolic plane~$U$. The~$\tO(\Lambda)$-orbit of a primitive
element~$x$ is determined by its square~$x^2$ and the class $x_*=[x/\div(x)]$
in~$D(\Lambda)$.
\end{proposition}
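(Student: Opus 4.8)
The plan is to prove the statement through \emph{Eichler transvections}, which generate a subgroup of~$\tO(\Lambda)$ that already acts transitively on primitive vectors with fixed invariants. For an isotropic vector~$e\in\Lambda$ (so~$e^2=0$) and any~$a\in e^\perp$, set
\[
t(e,a)\colon v\longmapsto v-(a,v)\,e+(e,v)\,a-\tfrac12(a,a)(e,v)\,e.
\]
First I would verify by direct computation that~$t(e,a)$ is an isometry fixing~$e$, that~$t(e,a)\,t(e,b)=t(e,a+b)$, and—most importantly—that~$t(e,a)$ acts trivially on~$D(\Lambda)$: for~$v\in\Lambda^\vee$ the pairings~$(a,v)$ and~$(e,v)$ are integers since~$a,e\in\Lambda$, and~$\tfrac12(a,a)\in\bZ$ since~$\Lambda$ is even, so~$t(e,a)(v)-v\in\Lambda$. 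Hence the group~$E_v(\Lambda)$ generated by all such transvections is contained in~$\tO(\Lambda)$, and every element preserves both~$x^2$ and~$x_*$. Since~$\tO(\Lambda)$ also preserves these two invariants, it suffices to show that~$E_v(\Lambda)$ acts transitively on primitive vectors with a fixed square and fixed class in~$D(\Lambda)$.

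Write~$\Lambda=U_1\oplus U_2\oplus L_0$ with~$U_i=\langle e_i,f_i\rangle$, and record the basic action formula: when~$(e,v)=0$ one has~$t(e,a)(v)=v-(a,v)\,e$, while when~$(e,v)\neq0$ the transvection shifts~$v$ by~$(e,v)\,a$ modulo~$e$. The heart of the argument is a normal-form reduction. Starting from a primitive~$x$ of nonzero square with divisibility~$d=\div(x)$, I would first arrange~$(x,e_1)\neq0$: if~$x$ has nonzero pairing with~$U_2$ one transfers a component to~$f_1$ via a transvection such as~$t(e_2,f_1)$, and if~$x\in L_0$ one applies~$t(f_1,-x)$, which adds~$x^2f_1$ and hence makes~$(x,e_1)=x^2\neq0$. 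Using transvections~$t(f_1,a)$ with~$a\in U_1^\perp$, which change~$(x,e_1)$ by~$-(a,x)-\tfrac12(a,a)(f_1,x)$, I would then reduce~$(x,e_1)$ to exactly~$d$; this is consistent because~$d\mid(x,e_1)$ throughout and the achievable shifts are governed by~$\div(x)$.

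Once~$(x,e_1)=d$, write~$x=a\,e_1+d\,f_1+x_0$ with~$x_0\in U_1^\perp$. The transvections~$t(e_1,a_0)$ with~$a_0\in U_1^\perp$ then act by~$x_0\mapsto x_0+d\,a_0$ modulo~$e_1$, so they let me alter~$x_0$ by any element of~$d\,U_1^\perp$ without disturbing~$x_*$, while the symmetric family~$t(f_1,\cdot)$ adjusts the coefficient~$a$. The residual freedom in~$x_0$ is controlled by the \emph{second} hyperbolic plane: since~$U_1^\perp=U_2\oplus L_0$ still contains a copy of~$U$, I would reduce~$x_0$ to a canonical representative depending only on its square and its residue class, which combine with the~$U_1$-data to depend only on~$x^2$ and~$x_*$. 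Two primitive vectors with equal invariants thus reach the same normal form, yielding transitivity and hence the proposition.

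The step I expect to be the main obstacle is the bookkeeping in normalizing~$(x,e_1)$ and the orthogonal component~$x_0$ when~$d>1$: one must check at each stage that the divisibility and the discriminant class~$x_*$ are preserved, and that the available transvection shifts genuinely suffice to reach a representative pinned down by~$(x^2,x_*)$ alone. The case~$d=1$, where~$x_*=0$ and the orbit is determined by the square, is markedly simpler and serves both as the base case of the reduction and as a sanity check on the sign conventions in the transvection formula.
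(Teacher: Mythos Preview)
The paper does not prove this proposition; it is quoted from \cite[Lemma~3.5]{GHS:ihs} and used as a tool throughout. Your approach via Eichler transvections is the standard one and is essentially how the cited source (following Eichler) argues, so there is no alternative proof in the present paper to compare against.

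Two remarks on your sketch. First, the case analysis for arranging $(x,e_1)\neq0$ has a small hole: if $x\in L_0$ is isotropic, the transvection $t(f_1,-x)$ adds $x^2f_1=0$ and accomplishes nothing. The fix is to first apply $t(e_2,y)$ for some $y\in L_0$ with $(x,y)\neq0$---such $y$ exists since $L_0$ is nondegenerate and $x$ is nonzero---to create a $U_2$-component, and then use your first case. Second, the step you correctly flag as the main obstacle is where the second hyperbolic plane is genuinely needed and is really the content of the lemma: after reaching $x=a\,e_1+d\,f_1+x_0$ with $x_0\in U_2\oplus L_0$, one must show that the residue of $x_0$ modulo $d\,(U_2\oplus L_0)$ can be brought, via transvections based on $e_1$ and on the isotropic vectors of $U_2$, to a fixed representative depending only on the class $x_*\in D(\Lambda)\simeq D(L_0)$, after which $a$ is determined by $x^2$. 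Your outline identifies this correctly but does not carry it out; that execution---in particular verifying that different lifts of $x_*$ in $L_0$ yield the same orbit, which is where $U_2$ is used to absorb the discrepancy---is the substance of the proof.
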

The Eichler's criterion can be slightly strengthened by replacing $\tO(\Lambda)$
with smaller subgroups.
\begin{proposition}
\label{prop:eichler_SO}
Under the same assumption for $\Lambda$ as above,
for a primitive element~$x$, the following three orbits coincide
\[
\tO(\Lambda)x = \tSO(\Lambda)x = \tSO^+(\Lambda)x.
\]
In particular, all three orbits are determined by the square $x^2$ and the class
$x_*$ in $D(\Lambda)$.
\end{proposition}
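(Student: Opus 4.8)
The plan is to exploit the \emph{proof} of Eichler's criterion rather than merely its statement. Since $\tSO^+(\Lambda)\subseteq\tSO(\Lambda)\subseteq\tO(\Lambda)$, the chain of inclusions $\tSO^+(\Lambda)x\subseteq\tSO(\Lambda)x\subseteq\tO(\Lambda)x$ is automatic, so everything reduces to the reverse inclusion $\tO(\Lambda)x\subseteq\tSO^+(\Lambda)x$. The key observation is that the transitivity in Proposition~\ref{prop:eichler} is witnessed not by arbitrary elements of $\tO(\Lambda)$ but by \emph{Eichler transvections}, and that these already lie in the smaller group $\tSO^+(\Lambda)$.

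First I would recall the relevant isometries. For an isotropic vector $u\in\Lambda$ and a vector $v\in u^\perp$, the Eichler transvection is
\[
E_{u,v}(w)=w-(u,w)\,v+(v,w)\,u-\tfrac12(v,v)(u,w)\,u.
\]
A direct computation using $(u,u)=(u,v)=0$ shows that $E_{u,v}$ preserves the bilinear form, so it is an isometry of $\Lambda_\bR$. When $u,v\in\Lambda$ it maps $\Lambda$ into itself, and since $\Lambda$ is even the coefficients $(u,w),(v,w),\tfrac12(v,v)$ are integers for every $w\in\Lambda^\vee$; hence $E_{u,v}(w)-w\in\Lambda$, so $E_{u,v}$ acts trivially on the discriminant group $D(\Lambda)$ and therefore lies in $\tO(\Lambda)$.

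Next I would check that every such $E_{u,v}$ is in fact in $\tSO^+(\Lambda)$, that is, has determinant $1$ and trivial spinor norm. Both follow from a single connectedness argument: the formula for $E_{u,sv}$ makes sense for all real $s$ and defines a continuous path $s\mapsto E_{u,sv}$ of isometries in $\O(\Lambda_\bR)$ joining $\Id$ (at $s=0$) to $E_{u,v}$ (at $s=1$). As both $\det$ and the spinor norm $\sigma$ are locally constant functions to the discrete set $\set{\pm1}$, they are constant along this path and equal to their value at the identity; thus $\det E_{u,v}=1$ and $\sigma(E_{u,v})=1$, so $E_{u,v}\in\tSO^+(\Lambda)$.

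Finally I would invoke the proof of Eichler's criterion, as in \cite[Lemma 3.5]{GHS:ihs}: when $\Lambda$ contains two orthogonal copies of $U$, any two primitive vectors sharing the same square and the same class in $D(\Lambda)$ are related by a composition of Eichler transvections, the two hyperbolic planes supplying the isotropic vectors $u$ needed to carry out the required moves. Since each factor lies in $\tSO^+(\Lambda)$ by the previous step, so does the composite, giving $\tO(\Lambda)x\subseteq\tSO^+(\Lambda)x$ and hence equality of all three orbits; the determination by $x^2$ and $x_*$ is then inherited from Proposition~\ref{prop:eichler}. The main obstacle is precisely this last step: one cannot use Proposition~\ref{prop:eichler} as a black box, but must open up its proof to see that the connecting isometry is realized within the subgroup generated by Eichler transvections. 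Verifying that this subgroup already acts transitively on each invariant class is the crux of the argument.
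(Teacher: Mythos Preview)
Your argument is correct but follows a genuinely different route from the paper. You open up the proof of Eichler's criterion and observe that the connecting isometries are compositions of Eichler transvections $E_{u,v}$, which already lie in $\tSO^+(\Lambda)$ by your connectedness argument. The paper instead uses Eichler's criterion as a black box: it first moves $x$ into $U_2\oplus\Lambda_0$ by some $\phi\in\tO(\Lambda)$, and then exploits the \emph{spare} hyperbolic plane $U_1$ to produce explicit reflections $R_u,R_v\in\O(\Lambda,\phi(x))\cap\tO(\Lambda)$ (with $u^2=2$, $v^2=-2$) realizing all four sign combinations of $(\det,\sigma)$; conjugating by $\phi$ and composing with any given $\varphi\in\tO(\Lambda)$ corrects its determinant and spinor norm without changing $\varphi(x)$.

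Your approach is conceptually cleaner and explains \emph{why} the stronger transitivity holds. The paper's approach, however, is self-contained given only the statement of Proposition~\ref{prop:eichler}, and it has a concrete byproduct: the explicit stabilizer elements $R_u$, $R_v$, $R_u\circ R_v$ with prescribed $\det$, $\sigma$, $\chi$. These are invoked several times later (in the proofs of Proposition~\ref{prop:numberoftau} and in Section~\ref{sec:examples}) to manufacture elements of $\O(\Lambda,h)$ with desired invariants, so the paper's method does double duty.
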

\begin{proof}
Write $\Lambda=U_1\oplus U_2\oplus \Lambda_0$ where $U_1$ and $U_2$ are two
copies of the hyperbolic plane $U$. Since $U$ is unimodular, by Eichler's
criterion, we may find $\phi\in \tO(\Lambda)$ such that $\phi(x)\in U_2\oplus
\Lambda_0$. Take $u,v\in U_1$ with $u^2=2$ and $v^2=-2$, then the
reflections $R_u,R_v$ lie in $\O(\Lambda,\phi(x))$ and they satisfy
$\sigma(R_u)=-1$, $\sigma(R_v)=1$, $\chi(R_u)=\chi(R_v)=1$, and
$\det(R_u)=\det(R_v)=-1$.

Now for $\varphi\in \tO(\Lambda)$ with $\det(\varphi)=-1$, we have
$\varphi(x)=\varphi\circ\phi^{-1}\circ\phi(x)
=\varphi\circ\phi^{-1}\circ R_u\circ \phi(x)$, and the element
$\varphi\circ\phi^{-1}\circ R_u\circ \phi$ has determinant 1, so $\varphi(x)$
lies in the same $\tSO(\Lambda)$-orbit as $x$ and we get $\tO(\Lambda)x =
\tSO(\Lambda)x$.

Similarly, for $\varphi \in \tSO(\Lambda)$ with $\sigma(\varphi)=-1$,
we have $\varphi(x)=\varphi\circ\phi^{-1}\circ\phi(x)
=\varphi\circ\phi^{-1}\circ R_u\circ R_v\circ \phi(x)$, and the element
$\varphi\circ\phi^{-1}\circ R_u\circ R_v\circ \phi$ lies in $\tSO^+(\Lambda)$,
so we get $\tSO(\Lambda)x = \tSO^+(\Lambda)x$.
\end{proof}

\section{Monodromy group and number of components}\label{sec:number}
In this section, we will calculate the number of components of the moduli
space~$\cM_T$ of a given polarization type~$T$, for all known deformation
types.
The polarization type determines the square and the
divisibility of its elements, but the converse is in general not true: we will
calculate the number of~$T$ with given square and divisibility.

First we recollect the descriptions for the lattice $\Lambda=H^2(X,\bZ)$ and
the monodromy group $\Mon(\Lambda)$ for all known deformation types. The
lattice structures for $\KKK^{[m]}$ and $\Kum_m$ are known by Beauville
\cite{Beauville}, and for $\OG_6$ and $\OG_{10}$ they are computed by
Rapagnetta \cite{Rapagnetta}. The monodromy group is computed by Markman in the
$\KKK^{[m]}$-case, Markman and Mongardi in the $\Kum_m$-case
\cite{Markman:Kummer, Mongardi:mon},
Mongardi--Rapagnetta for $\OG_6$ \cite{MongardiRapagnetta}, and Onorati for
$\OG_{10}$ \cite{Onorati:OG10}.
\begin{theorem}
The descriptions for the lattice $\Lambda=H^2(X,\bZ)$ and the monodromy
group $\Mon(\Lambda)$ for all known deformation types are as follows.
\begin{table}[H]
\[
\begin{array}{|c|c|c|c|}
\hline
&\Lambda=H^2(X,\bZ)&D(\Lambda)&\Mon(\Lambda)\\
\hline
\KKK&U^{\oplus3}\oplus E_8(-1)^{\oplus2}&0&\O^+(\Lambda)\\
\KKK^{[m]}&\Lambda_\KKK\oplus\inner{-(2m-2)}&\bZ/(2m-2)\bZ&\hO^+(\Lambda)\\
\Kum_m&U^{\oplus3}\oplus\inner{-(2m+2)}&\bZ/(2m+2)\bZ&\setmid{g\in
\hO^+(\Lambda)}{\chi(g)\cdot\det(g)=1}\\
\OG_6&U^{\oplus3}\oplus\inner{-2}^{\oplus2}&(\bZ/2\bZ)^2&\O^+(\Lambda)\\
\OG_{10}&\Lambda_\KKK\oplus\begin{psmallmatrix}-6&3\\3&-2\end{psmallmatrix}&\bZ/3\bZ&\O^+(\Lambda)\\
\hline
\end{array}
\]
\caption{Lattice and monodromy group for known deformation types}
\label{table:mon}
\end{table}

\noindent
Here~$U$ is the hyperbolic plane,~$E_8(-1)$ is the~$E_8$-lattice with negative
definite form, and~$\inner{k}$ is the lattice generated by one primitive
element with square $k$.
\end{theorem}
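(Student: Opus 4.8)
The plan is to treat this as a recollection theorem: each row of Table~\ref{table:mon} is established in the literature cited above, and the task is to assemble those computations and normalize the discriminant data to the conventions used in the rest of the paper. I would split the verification into the lattice column (the pair $\Lambda=H^2(X,\bZ)$ and its discriminant group $D(\Lambda)$) and the monodromy column.

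For the lattices, I would begin with the classical identification of the intersection form on a K3 surface with $\Lambda_\KKK=U^{\oplus3}\oplus E_8(-1)^{\oplus2}$, which is even unimodular of signature $(3,19)$, so $D(\Lambda)=0$. For $\KKK^{[m]}$- and $\Kum_m$-type I would invoke Beauville's explicit description \cite{Beauville}. For $X=S^{[m]}$ the Hilbert--Chow morphism contributes a half-diagonal class $\delta$, orthogonal to the embedded $H^2(S,\bZ)\cong\Lambda_\KKK$ and of square $-(2m-2)$, giving $\Lambda_\KKK\oplus\inner{-(2m-2)}$; for a generalized Kummer variety the analogous class has square $-(2m+2)$ over $H^2(A,\bZ)\cong U^{\oplus3}$, giving $U^{\oplus3}\oplus\inner{-(2m+2)}$. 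For $\OG_6$ and $\OG_{10}$ I would cite Rapagnetta's determination \cite{Rapagnetta}. In each case $D(\Lambda)$ is then read off directly: it is cyclic of order $2m\mp2$ generated by the class of the rank-one summand, it is $(\bZ/2\bZ)^2$ for $\OG_6$, and it is the order-$3$ group of the rank-two summand $\begin{psmallmatrix}-6&3\\3&-2\end{psmallmatrix}$ (of determinant $3$) for $\OG_{10}$.

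For the monodromy groups the strategy is a two-sided estimate. The containment $\Mon(\Lambda)\subseteq\O^+(\Lambda)$ is Proposition~\ref{prop:Oplus}; the sharper upper bounds---to $\hO^+(\Lambda)$ for $\KKK^{[m]}$, and to the determinant-compatible subgroup $\setmid{g\in\hO^+(\Lambda)}{\chi(g)\det(g)=1}$ for $\Kum_m$---come from the requirement that a monodromy operator preserve the additional structure carried by the exceptional class and by the orientation of the discriminant form. The reverse inclusion, that every such isometry is genuinely realized by a monodromy operator, is the substantive input and is exactly what the cited works prove: Markman for $\KKK^{[m]}$ \cite{Markman:survey}, Markman and Mongardi for $\Kum_m$ \cite{Markman:Kummer,Mongardi:mon}, Mongardi--Rapagnetta for $\OG_6$ \cite{MongardiRapagnetta}, and Onorati for $\OG_{10}$ \cite{Onorati:OG10}; for K3 the equality $\Mon(\Lambda)=\O^+(\Lambda)$ follows from surjectivity of the period map together with the Torelli theorem.

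I expect the real difficulty to sit entirely in this reverse inclusion: constructing enough explicit monodromy operators---through suitable families, birational maps, and reflection-type automorphisms---to generate the full claimed group, and matching these against the lattice-theoretic upper bound. Since those constructions are carried out in the references, for the purposes of this paper I would present the statement as a citation-backed recollection, verifying only that the discriminant-group normalizations and the descriptions of $\Mon(\Lambda)$ agree with the conventions of Table~\ref{table:mon}.
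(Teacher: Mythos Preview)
Your proposal is correct and matches the paper's treatment exactly: the paper presents this theorem purely as a recollection, giving no proof but attributing each entry to the same references you cite (Beauville, Rapagnetta, Markman, Markman--Mongardi, Mongardi--Rapagnetta, Onorati). Your additional organizational sketch (splitting into lattice/discriminant verification and a two-sided monodromy estimate) goes slightly beyond what the paper does, but is entirely consistent with it.
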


We may compute the number of components for a given polarization type~$T$ using
Proposition~\ref{prop:deftype}. We first prove a lemma concerning the
orthogonal group of the discriminant group~$D(\Lambda)$.

\begin{lemma}
\label{lemma:OG}
Let~$D$ be a cyclic group of order~$2n$ with a quadratic form $q\colon
D\to\bQ/2\bZ$. If there is a generator~$g\in D$ with~$q(g)=\frac{1}{2n}$, then
\[
\O(D)=\setmid{g\mapsto ag}
{\begin{gathered}
a\in\bZ/2n\bZ\\
a^2\equiv1\pmod{4n}
\end{gathered}}\simeq(\bZ/2\bZ)^{\rho(n)},
\]
where~$\rho(n)$ denotes the number of distinct prime divisors of~$n$.
\end{lemma}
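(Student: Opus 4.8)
The plan is to reduce the problem to an explicit description of the isometries of~$D$ and then to an elementary count of square roots of unity.

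First I would use that $D\simeq\bZ/2n\bZ$ is cyclic, so every group automorphism has the form $\phi_a\colon g\mapsto ag$ for a unique $a\in(\bZ/2n\bZ)^\times$, and $\O(D)$ is the subgroup of those $\phi_a$ that preserve~$q$. Since $D$ is generated by~$g$ and $q(kg)=k^2q(g)$, the automorphism $\phi_a$ is an isometry if and only if $q(ag)=q(g)$, that is $a^2q(g)=q(g)$ in~$\bQ/2\bZ$. Substituting $q(g)=\frac{1}{2n}$ turns this into $\frac{a^2-1}{2n}\in 2\bZ$, i.e.\ $a^2\equiv1\pmod{4n}$. I would then record two compatibility remarks: if $a'\equiv a\pmod{2n}$ then $(a')^2\equiv a^2\pmod{4n}$, so the condition is well-defined on $\bZ/2n\bZ$; and any $a$ with $a^2\equiv1\pmod{4n}$ satisfies $a^2\equiv1\pmod{2n}$, hence is a unit, so that $\phi_a$ is indeed an automorphism. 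This identifies $\O(D)$ with $\setmid{a\in\bZ/2n\bZ}{a^2\equiv1\pmod{4n}}$, as claimed in the statement.

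Next, because every such $a$ satisfies $a^2\equiv1\pmod{2n}$, every element of $\O(D)$ is an involution, so $\O(D)$ is an elementary abelian $2$-group and therefore isomorphic to $(\bZ/2\bZ)^k$ with $k=\log_2\lvert\O(D)\rvert$. It then remains to verify that $k=\rho(n)$ by counting the solutions of $a^2\equiv1\pmod{4n}$ inside $\bZ/2n\bZ$. Writing $N_m$ for the number of solutions of $x^2\equiv1\pmod m$, the map $a\mapsto\set{a,\,a+2n}$ shows (using $(a+2n)^2\equiv a^2\pmod{4n}$) that each solution class modulo~$2n$ lifts to exactly two solution classes modulo~$4n$, so $\lvert\O(D)\rvert=N_{4n}/2$. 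I would then invoke the multiplicativity of $N$ via the Chinese remainder theorem together with the standard prime-power values $N_{p^e}=2$ for odd~$p$, and $N_2=1$, $N_4=2$, $N_{2^e}=4$ for $e\ge3$. Writing $n=2^s n'$ with $n'$ odd gives $N_{4n}=N_{2^{s+2}}\cdot N_{n'}$, and a short case check ($s=0$ versus $s\ge1$) confirms $N_{4n}/2=2^{\rho(n)}$ in every case, completing the proof.

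The main obstacle is the $2$-adic bookkeeping in the final count: one must keep track both of the factor of two relating the count modulo~$2n$ to the count modulo~$4n$, and of the irregular small-exponent values of $N_{2^e}$ (where $N_4=2$ but $N_{2^e}=4$ for $e\ge3$), and check that these two irregularities cancel so that the exponent stabilizes to exactly $\rho(n)$ regardless of the power of~$2$ dividing~$n$.
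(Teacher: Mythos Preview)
Your proposal is correct and follows essentially the same approach as the paper: both reduce to counting, via the Chinese remainder theorem, the residues $a\in\bZ/2n\bZ$ with $a^2\equiv1\pmod{4n}$, treating the $2$-adic contribution separately. The only organizational difference is that the paper directly parametrizes the solutions in $\bZ/2n\bZ$ (listing the conditions $a\equiv\pm1$ modulo each odd prime power of $n$, together with the appropriate $2$-adic condition), whereas you first pass to $N_{4n}$ and then divide by~$2$; the underlying arithmetic is identical.
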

\begin{proof}
Write $n=p_1^{\alpha_1}\cdots p_r^{\alpha_r}$ with $r=\rho(n)$. If~$n$ is
odd,~$a$ is determined by the conditions~$a\equiv 1\pmod 2$ and $a\equiv\pm
1\pmod{p_i^{\alpha_i}}$; if~$n$ is even, we let $p_1=2$, then~$a$ is determined
by the conditions $a\equiv \pm1\pmod{2^{\alpha_1+1}}$ and $a\equiv\pm
1\pmod{p_i^{\alpha_i}}$ for~$i\ge2$. In both cases, we
have~$\O(D)\simeq(\bZ/2\bZ)^{\rho(n)}$.
\end{proof}
The Eichler's criterion allows us to compute the number of
$\tO(\Lambda)$-orbits.
\begin{lemma}
\label{lemma:number_tO_orbits}
Let $\Lambda$ be an even lattice containing at least two orthogonal copies of
the hyperbolic plane $U$, such that the discriminant group $D(\Lambda)$ is
cyclic of order $2n$. Then for each $\O(\Lambda)$-orbit $T$ of a primitive
element with divisibility $\gamma$, the number of $\tO(\Lambda)$-orbits
contained in $T$ is equal to $2^{\tilde{\rho}(\gamma)}$,
where~$\tilde{\rho}(n)$ is equal to~$\rho(n)$---the number of distinct prime
divisors of~$n$---if~$n$ is odd, and~$\rho(n/2)$ if~$n$ is even.
\end{lemma}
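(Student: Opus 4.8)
The plan is to reduce the count to an orbit computation inside the discriminant group and then to carry it out prime by prime. By Eichler's criterion (Proposition~\ref{prop:eichler}), the $\tO(\Lambda)$-orbit of a primitive element is determined by its square and by the class $x_*\in D(\Lambda)$; since every element of $T$ shares the common square and divisibility $\gamma$, and since $\tO(\Lambda)=\ker\chi$ acts trivially on $D(\Lambda)$, the $\tO(\Lambda)$-orbits contained in $T$ are in bijection with the set of values $x_*$ as $x$ ranges over $T$. The rank of $\Lambda$ is at least $4$, hence at least the minimal number of generators of the cyclic group $D(\Lambda)$ plus $2$, so Proposition~\ref{prop:chisurj} gives that $\chi\colon\O(\Lambda)\to\O(D(\Lambda))$ is surjective. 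As $T$ is a single $\O(\Lambda)$-orbit and $(\phi x)_*=\chi(\phi)(x_*)$, the values $x_*$ form exactly one $\O(D(\Lambda))$-orbit. Therefore the number we seek equals the cardinality of the $\O(D(\Lambda))$-orbit of $x_*$, an element of order $\gamma$ in the cyclic group $D(\Lambda)\simeq\bZ/2n\bZ$.

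Next I would describe this orbit. By Lemma~\ref{lemma:OG} the group $\O(D(\Lambda))$ consists of the multiplications $g\mapsto ag$ with $a^2\equiv1\pmod{4n}$ (the sign of the discriminant form on a generator is immaterial, as the defining congruence only involves $a^2-1$). Writing $x_*$ as an element of order $\gamma$, one has $a\,x_*=x_*$ if and only if $a\equiv1\pmod\gamma$, so by orbit--stabilizer the orbit size equals $|\O(D(\Lambda))|\big/\,\lvert\{a:a^2\equiv1\ (4n),\ a\equiv1\ (\gamma)\}\rvert$. Both the group and the stabilizer split as products over the primes dividing $2n$ via the Chinese remainder theorem, so the orbit size is the product of the local orbit sizes of the components $(x_*)_p$.

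Finally I would evaluate the local factors. For an odd prime $p$, the local factor of $\O(D(\Lambda))$ is $\{\pm1\}$ acting on $\bZ/p^{v_p(n)}\bZ$; since $(x_*)_p$ has order $p^{v_p(\gamma)}\ge3$ whenever $p\mid\gamma$, its local orbit has size $2$ exactly when $p\mid\gamma$ and size $1$ otherwise. The delicate point, which I expect to be the main obstacle, is the prime $2$: here the local factor acts on $\bZ/2^{v_2(2n)}\bZ=\bZ/2^{\,v_2(n)+1}\bZ$, and a short computation shows that the admissible multipliers $a$, constrained by $a^2\equiv1\pmod{2^{\,v_2(n)+2}}$ while only being read modulo $2^{\,v_2(n)+1}$, reduce to $\{\pm1\}$. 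Consequently the local orbit of $(x_*)_2$ has size $2$ precisely when $(x_*)_2$ has order at least $4$, that is when $v_2(\gamma)\ge2$, and size $1$ when $v_2(\gamma)\le1$. Multiplying the local contributions, the orbit size is $2$ raised to the number of odd primes dividing $\gamma$, plus one further factor exactly when $v_2(\gamma)\ge2$; this exponent is precisely $\tilde\rho(\gamma)$, which yields the claimed count $2^{\tilde\rho(\gamma)}$. The phenomenon that $v_2(\gamma)=1$ contributes nothing while $v_2(\gamma)\ge2$ contributes a factor $2$ is exactly what forces the appearance of $\tilde\rho$ rather than $\rho$.
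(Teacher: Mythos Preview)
Your proof is correct and follows essentially the same route as the paper: reduce via Eichler's criterion and the surjectivity of~$\chi$ to the size of the $\O(D(\Lambda))$-orbit of~$h_*$, then compute this orbit prime by prime using Lemma~\ref{lemma:OG}. The paper counts the image of the reduction map $a\mapsto a\bmod\gamma$ directly, while you phrase the same computation through orbit--stabilizer and local factors; the case analysis at the prime~$2$ and the resulting exponent~$\tilde\rho(\gamma)$ come out identically. Your remark that the sign of~$q(g)$ in Lemma~\ref{lemma:OG} is immaterial (since only~$a^2-1$ enters) is a useful observation, as in the intended applications the generator~$\delta_*$ has~$q(\delta_*)=-1/(2\tilde m)$ rather than~$+1/(2\tilde m)$.
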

\begin{proof}
Fix one element~$h\in T$ so that~$T$ is the set~$\setmid{\phi(h)}{\phi\in\O(\Lambda)}$.
By Eichler's criterion (Proposition~\ref{prop:eichler}), as the square is
fixed, the number of $\tO(\Lambda)$-orbits is the same as the number of
possible values of~$\big(\phi(h)\big)_*=\chi(\phi)(h_*)\in
D(\Lambda)$ for all~$\phi\in\O(\Lambda)$. The lattice~$\Lambda$ satisfies the
condition in
Proposition~\ref{prop:chisurj}, so the
homomorphism~$\chi\colon\O(\Lambda)\to\O(D(\Lambda))$ is surjective.
Therefore it suffices to count the number of possible~$a h_*\in D(\Lambda)$
for all~$a\in \O(D(\Lambda))$. Since~$h$ is primitive of divisibility~$\gamma$,
the class~$h_*=[h/\gamma]$ is of order~$\gamma$. Viewing the isometry~$a$ as an
element of~$\bZ/2n\bZ$, we therefore need to count the number of
possible remainders of~$a$ modulo~$\gamma$ under the quotient
map~$\bZ/2n\bZ\to\bZ/\gamma\bZ$.

Using a similar argument as in the proof of Lemma~\ref{lemma:OG}, we write
$\gamma=p_1^{\alpha_1}\cdots p_r^{\alpha_r}$ with $r=\rho(\gamma)$. If~$\gamma$
is odd, then~$a$ modulo~$\gamma$ can take all the values satisfying~$a\equiv\pm
1\pmod{p_i^{\alpha_i}}$. If $\gamma$ is even, let $p_1=2$; if $\gamma$ is not
divisible by~$4$, that is, $\alpha_1=1$, then~$a$ modulo~$\gamma$ can take all the
values satisfying $a\equiv 1\pmod 2$ and $a\equiv\pm 1\pmod{p_i^{\alpha_i}}$; if
$\alpha_1\ge2$, $a$ modulo~$\gamma$ can take all the values satisfying~$a\equiv
\pm1\pmod{2^{\alpha_1+1}}$ and $a\equiv\pm 1\pmod{p_i^{\alpha_i}}$ for $i\ge2$.
Combining all three cases, the number of~$\tO(\Lambda)$-orbits
is equal to~$2^{\tilde{\rho}(\gamma)}$.
\end{proof}

Now we can compute the number of connected components.

\begin{proposition}
\label{prop:numberoftau}
Let $X$ be a hyperkähler manifold and $T$ be a polarization type of
divisibility~$\gamma$ on $X$.
\begin{itemize}
\item If $X$ is of $\KKK^{[m]}$-type or $\Kum_m$-type, the number of connected
components of the polarized moduli space~$\cM_T$ is equal
to~$2^{\max(\tilde{\rho}(\gamma)-1,0)}$.
\item If $X$ is of $\OG_6$-type or $\OG_{10}$-type, the polarized moduli space
$\cM_T$ is connected.\qedhere
\end{itemize}
\end{proposition}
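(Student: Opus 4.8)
The plan is to translate the count of connected components into a count of orbits on the discriminant group. By Proposition~\ref{prop:deftype}, the number of components of $\cM_T$ equals the number of $\Mon(\Lambda)$-orbits contained in the $\O(\Lambda)$-orbit $T$. For each deformation type in Table~\ref{table:mon} one checks immediately that $\tSO^+(\Lambda)\subseteq\Mon(\Lambda)$, since the defining conditions of $\tSO^+(\Lambda)$ (trivial action on $D(\Lambda)$, determinant $1$, trivial spinor norm) are compatible with each monodromy group. Hence, by Proposition~\ref{prop:eichler_SO}, every $\Mon(\Lambda)$-orbit is a union of $\tO(\Lambda)$-orbits, and by Eichler's criterion (Proposition~\ref{prop:eichler}) these are classified inside $T$ by the class $x_*\in D(\Lambda)$. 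Fixing $h\in T$, the set of labels occurring in $T$ is $L\coloneqq\O(D(\Lambda))\cdot h_*$, because $x=g(h)$ gives $x_*=\chi(g)(h_*)$ and $\chi$ is surjective by Proposition~\ref{prop:chisurj} (the rank hypothesis holds for all five lattices). Since $\Mon(\Lambda)$ permutes labels through its image $\chi(\Mon(\Lambda))\subseteq\O(D(\Lambda))$, a short well-definedness check shows that the number of components equals the number of orbits of $\chi(\Mon(\Lambda))$ acting on $L$. This is the common framework; the proof then splits according to the image $\chi(\Mon(\Lambda))$.

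For the $\KKK^{[m]}$- and $\Kum_m$-types, $D(\Lambda)$ is cyclic of order $2\tilde m$, and by Lemma~\ref{lemma:number_tO_orbits} the set $L$ is an elementary abelian $2$-group of order $2^{\tilde\rho(\gamma)}$ containing the element $-1\bmod\gamma$. The crucial step is to prove $\chi(\Mon(\Lambda))=\set{\pm1}$. The inclusion $\subseteq$ is immediate from $\Mon(\Lambda)\subseteq\hO(\Lambda)$. For $\supseteq$, I would lift $-1\in\O(D(\Lambda))$ to an isometry $g_0$ and correct its determinant and spinor norm using the reflections $R_u,R_v$ in vectors of a hyperbolic summand $U\subseteq\Lambda$ with $u^2=2$, $v^2=-2$, which satisfy $\chi(R_u)=\chi(R_v)=1$: composing $g_0$ with a suitable word in $R_u,R_v$ leaves $\chi=-1$ untouched while realizing $\sigma=1$ for $\KKK^{[m]}$, or $(\det,\sigma)=(-1,1)$ for $\Kum_m$ (whose monodromy condition is $\chi\cdot\det=1$), landing in $\Mon(\Lambda)$. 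Granting this, the count is combinatorial: the action of $\set{\pm1}$ on $L$ is multiplication by the group element $-1\bmod\gamma$, which is trivial exactly when $\gamma\le2$. So there are $2^{\tilde\rho(\gamma)}$ orbits when $\gamma\le2$ (and then $\tilde\rho(\gamma)=0$) and $2^{\tilde\rho(\gamma)-1}$ orbits when $\gamma\ge3$ (where $\tilde\rho(\gamma)\ge1$), giving $2^{\max(\tilde\rho(\gamma)-1,0)}$ uniformly.

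For the $\OG_6$- and $\OG_{10}$-types, $\Mon(\Lambda)=\O^+(\Lambda)$, and it suffices to show that $\chi$ restricts to a surjection $\O^+(\Lambda)\onto\O(D(\Lambda))$: then $\chi(\Mon(\Lambda))=\O(D(\Lambda))$ acts transitively on the single $\O(D(\Lambda))$-orbit $L$, so $T$ is one $\Mon(\Lambda)$-orbit and $\cM_T$ is connected. Here $D(\Lambda)$ need not be cyclic, but the argument does not require the explicit count: surjectivity of $\chi$ on $\O(\Lambda)$ follows from Proposition~\ref{prop:chisurj} (rank $8$ against $(\bZ/2\bZ)^2$, and rank $24$ against $\bZ/3\bZ$), and one upgrades it to $\O^+(\Lambda)$ by composing, if necessary, with the reflection $R_u$ in a vector of square $2$ inside a copy of $U$, which fixes $D(\Lambda)$ and has spinor norm $-1$.

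I expect the main obstacle to be the determination of $\chi(\Mon(\Lambda))$ in the $\Kum_m$-case, where the condition $\chi(g)\det(g)=1$ couples the action on $D(\Lambda)$ to the determinant: one must realize $\sigma=1$ while keeping $\chi=-1$ and $\det=-1$ simultaneously. This is exactly what the four isometries $\set{1,R_u,R_v,R_uR_v}$ afford, as they produce all four sign pairs $(\det,\sigma)$ while fixing $\chi=1$; once this is in hand, the remainder is bookkeeping with the group structure of $L$.
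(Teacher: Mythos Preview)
Your proposal is correct and follows essentially the same approach as the paper: both reduce to counting $\tO(\Lambda)$-orbits via Eichler's criterion, then pass to $\Mon(\Lambda)$-orbits using the reflections $R_u,R_v$ (to adjust $\sigma$ and $\det$) together with $-\Id$ (to realize $\chi=-1$). The paper carries out this passage case by case---for $\KKK^{[m]}$ it goes $\tO\to\hO\to\hO^+$, for $\Kum_m$ it identifies $\Mon(\Lambda)=\langle\tSO^+(\Lambda),-R_uR_v\rangle$---whereas you package all cases uniformly as the action of $\chi(\Mon(\Lambda))$ on the label set $L$; this is a cleaner formulation but the underlying computation is identical. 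One minor imprecision: $L$ is not itself an elementary abelian $2$-group but rather a torsor for the image of $\O(D(\Lambda))$ in $(\bZ/\gamma\bZ)^\times$; your subsequent argument (that $-1$ acts freely on $L$ exactly when $\gamma\ge3$) is unaffected.
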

\begin{proof}
As Proposition~\ref{prop:deftype} shows, the number of connected components of
$\cM_T$ is equal to the number of~$\Mon(\Lambda)$-orbits contained in
the~$\O(\Lambda)$-orbit~$T$.
We fix one element~$h\in T$, so~$T$ is the
set~$\setmid{\phi(h)}{\phi\in\O(\Lambda)}$.

\bigskip

\noindent\textbf{Case $\KKK^{[m]}$:}
The discriminant group $D(\Lambda)$ is cyclic of order $2m-2$, so
Lemma~\ref{lemma:number_tO_orbits} applies and the number of
$\tO(\Lambda)$-orbits contained in $T$ is equal to $2^{\tilde{\rho}(\gamma)}$.

Since the subgroup~$\hO(\Lambda)$ is generated by~$\tO(\Lambda)$
and~$-\Id$, we see that if~$h$ and~$-h$ are in the same~$\tO(\Lambda)$-orbit,
that is, when~$\gamma$ is~$1$ or~$2$, the number of~$\hO(\Lambda)$-orbits is the
same as the number of~$\tO(\Lambda)$-orbits; otherwise it should be divided
by~$2$. So this gives $2^{\max(\tilde{\rho}(\gamma)-1,0)}$ as the number
of~$\hO(\Lambda)$-orbits.

To conclude, we show that the~$\hO(\Lambda)$-orbits and the~$\hO^+(\Lambda)$-orbits
are the same.
Following the proof of Proposition~\ref{prop:eichler_SO}, there is an element
$R\in \O(\Lambda,h)$ (namely $R_u$) with $\sigma(R)=-1$ and $\chi(R)=1$.
Now for $\phi\in\hO(\Lambda)$ with $\sigma(\phi)=-1$, we have
$\phi(h)=\phi\circ R(h)$, where $\phi\circ R$ lies in $\hO^+(\Lambda)$.
So $\phi(h)$ lies in the same $\hO^+(\Lambda)$-orbit as $h$ and therefore
$\hO(\Lambda)h = \hO^+(\Lambda)h$.

\bigskip

\noindent \textbf{Case $\Kum_m$:}
The discriminant group $D(\Lambda)$ is cyclic of order $2m+2$, so
again Lemma~\ref{lemma:number_tO_orbits} applies and we get
$2^{\tilde{\rho}(\gamma)}$ as the number of $\tO(\Lambda)$-orbits. By
Proposition~\ref{prop:eichler_SO}, this is also the number of
$\tSO^+(\Lambda)$-orbits.

Moreover, following the proof of Proposition~\ref{prop:eichler_SO}, there
exists an element $R\in O(\Lambda,h)$ (namely $R_u\circ R_v$) such that
$\sigma(R)=-1$, $\chi(R)=1$, $\det(R)=1$. On the other hand,
we note that $\sigma(-\Id)=-1$, $\chi(-\Id)=-1$, $\det(-\Id)=-1$. This shows that
$\Mon(\Lambda)$ is generated by $\tSO^+(\Lambda)$ and $-R$.
If $h$ and $-h=-R(h)$ are in the same $\tSO^+(\Lambda)$-orbit, that is, when $\gamma$
is 1 or 2, then the number of $\Mon(\Lambda)$-orbits is the same as the number
of $\tSO^+(\Lambda)$-orbits; otherwise it should be divided by 2.
So again we obtain $2^{\max(\tilde{\rho}(\gamma)-1,0)}$ as the number of
$\Mon(\Lambda)$-orbits.

\bigskip

\noindent \textbf{Case $\OG_6$ and $\OG_{10}$:}
In these two cases, the monodromy group is equal to $\O^+(\Lambda)$.
Again, following the proof of Proposition~\ref{prop:eichler_SO}, there
exists a reflection $R\in O(\Lambda,h)$ (namely $R_u$) such that
$\sigma(R)=-1$. So one may conclude that the $\O^+(\Lambda)$-orbit of $h$
coincides with the entire $\O(\Lambda)$-orbit $T$.
\end{proof}

We also have the following result on the number of polarization types with
given square and divisibility on a hyperkähler manifold of $\KKK^{[m]}$-type or
$\Kum_m$-type. Together with Proposition~\ref{prop:numberoftau}, this
gives a refined version of Apostolov's \cite{Apostolov} result for $\KKK^{[m]}$ and
Onorati's \cite{Onorati:kummer} result for $\Kum_m$ (\cf also
\cite[Proposition 3.6]{GHS:ihs}).
\begin{proposition}\label{prop:numberofT}
Let~$m$,~$n$, and~$\gamma$ be positive integers with~$m\ge2$.
Let $\tilde m$ be $m-1$ for the $\KKK^{[m]}$-type and $m+1$ for the $\Kum_m$-type, so
in both cases we have $D(\Lambda)\simeq \bZ/2\tilde m\bZ$.
Moreover we assume that
$\gamma\mid\operatorname{gcd}(2\tilde m,2n)$. For a prime divisor~$p$
of~$\gamma$, set $\alpha\coloneqq\min(v_p(\tilde m),v_p(n))$ and
$\beta\coloneqq v_p(\gamma)$, where $v_p$ is the $p$-adic valuation.
Then there exists a polarization type~$T$ of square~$2n$ and of
divisibility~$\gamma$, if and only if
the following conditions are satisfied for all prime divisors~$p$ of~$\gamma$:
\begin{itemize}
\item if $v_p(\tilde m)\ne v_p(n)$, then $\beta\le \alpha/2$;
\item if $v_p(\tilde m)=v_p(n)=\alpha$, then either $\beta\le\alpha/2$, or
$\beta>\alpha/2$ and~$-n/\tilde m$ is a square modulo~$p^{2\beta-\alpha}$.
\end{itemize}
The total number of these~$T$ is given by the product $\prod_{p\mid\gamma}N_p$,
where for~$p\ge3$
\[
N_p\coloneqq\begin{cases}\frac12(p-1)p^{\beta-1}&\textnormal{if }
\beta\le\alpha/2;\\p^{\alpha-\beta}&\textnormal{if }\beta>\alpha/2;\end{cases}
\]
and for~$p=2$
\[
N_2\coloneqq \begin{cases}1&\textnormal{if }\beta= 1;\\
2^{\beta-2}&\textnormal{if }\beta\ge2,\beta\le\alpha/2+1;\\
2^{\alpha+1-\beta}&\textnormal{if }\beta>\alpha/2+1.\end{cases}
\]
\end{proposition}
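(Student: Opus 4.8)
The plan is to translate the statement into a count of orbits of discriminant classes, and then to reduce everything to an elementary but delicate congruence count, carried out one prime at a time.

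First I would set up the discriminant dictionary. Writing $D(\Lambda)=\langle g\rangle\simeq\bZ/2\tilde m\bZ$, a direct computation on the generator coming from the $\inner{-2\tilde m}$ summand gives $q(g)=-\tfrac1{2\tilde m}$. By Eichler's criterion (Proposition~\ref{prop:eichler}), a $\tO(\Lambda)$-orbit of a primitive vector of square~$2n$ and divisibility~$\gamma$ is the same datum as a class $h_*\in D(\Lambda)$ of order~$\gamma$ with $q(h_*)=\tfrac{2n}{\gamma^2}\in\bQ/2\bZ$; and by the surjectivity of~$\chi$ (Proposition~\ref{prop:chisurj}, whose hypotheses hold since $D(\Lambda)$ is cyclic and $\Lambda$ has rank at least~$7$) a polarization type~$T$ is exactly an $\O(D(\Lambda))$-orbit of such classes. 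Parametrizing the order-$\gamma$ classes as $h_*=\tfrac{2\tilde m}{\gamma}k\,g$ with $k\in(\bZ/\gamma\bZ)^*$, the condition $q(h_*)=\tfrac{2n}{\gamma^2}$ unwinds to the congruence $\tilde m k^2+n\equiv0\pmod{\gamma^2}$, and a short check using $\gamma\mid2\tilde m$ shows this is well defined on $k\bmod\gamma$.

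Next I would describe the group action and localize. By Lemma~\ref{lemma:OG}, $\O(D(\Lambda))$ consists of the maps $g\mapsto ag$ with $a^2\equiv1\pmod{4\tilde m}$, acting on our parameter by $k\mapsto ak$. The key structural observation is that any such~$a$ is congruent to $\pm1$ modulo $p^{v_p(\tilde m)}$ for each odd prime~$p$ and modulo $2^{v_2(\tilde m)+1}$ at the prime~$2$; since $v_p(\gamma)\le v_p(2\tilde m)$ for every $p\mid\gamma$, the reduction of~$a$ modulo $p^{v_p(\gamma)}$ is therefore always~$\pm1$, and these signs may be chosen independently across the primes dividing~$\gamma$. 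By the Chinese Remainder Theorem the whole count then factors as a product over $p\mid\gamma$ of the number~$N_p$ of $\set{\pm1}$-orbits on the local solution set $S_p=\setmid{k\in(\bZ/p^\beta\bZ)^*}{\tilde m k^2+n\equiv0\pmod{p^{2\beta}}}$, with $\beta=v_p(\gamma)$. Since $-k\equiv k\bmod p^\beta$ is impossible for a unit~$k$ once $p^\beta>2$, the $\pm$-action is free except in the single case $p=2,\ \beta=1$, where $S_2\subseteq\set1$ forces $N_2=1$.

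Then I would carry out the local analysis. Writing $\mu=v_p(\tilde m)$, $\nu=v_p(n)$, $\alpha=\min(\mu,\nu)$, and using $v_p(k)=0$, one has $v_p(\tilde m k^2+n)=\alpha$ when $\mu\ne\nu$, which makes $S_p$ either all of $(\bZ/p^\beta\bZ)^*$ (when $2\beta\le\alpha$) or empty; when $\mu=\nu$ the congruence becomes $k^2\equiv-n/\tilde m\pmod{p^{2\beta-\alpha}}$, whose solvability is precisely the quadratic-residue condition of the second bullet and whose solution count comes from the standard description of square roots modulo prime powers. Counting the solutions $k\bmod p^\beta$ and halving for the free $\pm$-action yields~$N_p$: for odd~$p$ this gives the clean dichotomy $\tfrac12(p-1)p^{\beta-1}$ versus $p^{\alpha-\beta}$. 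The main obstacle is the prime $p=2$, where the number of square roots of a unit modulo $2^M$ jumps from~$1$ to~$2$ to~$4$ as $M$ crosses $1,2,3$, and where a further coincidence occurs when $2\beta-\alpha=\beta+1$ (two of the four roots collapse modulo $2^\beta$); tracking these cases is exactly what produces the three-way split in the formula for~$N_2$. Multiplying the local existence conditions and the local counts $N_p$ over all $p\mid\gamma$ then gives both the stated criterion and the product $\prod_{p\mid\gamma}N_p$.
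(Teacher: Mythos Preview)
Your argument is correct and follows essentially the same route as the paper: both reduce the problem to counting residues $k\in(\bZ/\gamma\bZ)^*$ satisfying the congruence $\tilde m k^2+n\equiv0\pmod{\gamma^2}$, and then carry out the same prime-by-prime case analysis (including the delicate $p=2$ bookkeeping). The only organizational difference is that the paper first writes $h=\gamma a x+b\delta$ explicitly in the lattice to derive the congruence and then divides the total $\tO(\Lambda)$-orbit count by $2^{\tilde\rho(\gamma)}$ via Lemma~\ref{lemma:number_tO_orbits}, whereas you work directly with discriminant classes and perform the $\{\pm1\}$-quotient locally; these are equivalent packagings of the same computation.
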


\begin{proof}
For the $\KKK^{[m]}$-type and the $\Kum_m$-type, we have
$\Lambda=\Lambda_0\oplus \bZ\delta$, where $\Lambda_0$ is an even
unimodular lattice containing three orthogonal copies of the hyperbolic plane
$U$, and $\delta$ is of square $-2\tilde m$. The discriminant group is cyclic
of order $2\tilde m$, generated by $\delta_*$.

We first study the existence of a polarization type with given square and
divisibility.  Let~$h\in\Lambda$ be a primitive element of
divisibility~$\gamma$.  If~$\gamma=1$, since~$\Lambda_0$ contains orthogonal
copies of $U$, it is clear that a polarization type of
square $2n$ exists for all $n>0$. So we will look at~$\gamma\ge2$. We write
\[
h=\gamma ax+b\delta,
\]
where $x\in\Lambda_0$ is primitive of square~$x^2=2c$, with $a,b,c\in\bZ$
such that $\gcd(\gamma a,2\tilde m)=\gamma$ and $\gcd(\gamma a,b)=1$. Suppose
that~$h$ is of square~$2n$. We obtain the relation
\[
2n=h^2=2ca^2\gamma^2-b^2\cdot 2\tilde m.
\]
For such an~$h$ to exist, it is necessary and sufficient that there exist
some integer~$b$ satisfying
\[
\gamma^2\mid b^2\tilde m+n.
\]
For each prime divisor $p$ of~$\gamma$, since~$\gcd(\gamma,b)=1$, we see
that~$b$ is not divisible by~$p$. So if $v_p(\tilde m)\ne v_p(n)$, then
$v_p(b^2\tilde m+n)=\min(v_p(\tilde m),v_p(n))$ and we obtain the first
condition; if $v_p(\tilde m)=v_p(n)=\alpha$, then for $p^{2\beta}\mid b^2\tilde
m+n$ to hold we obtain the second condition.

Given the square and the divisibility, to count the number
of such~$\O(\Lambda)$-orbits~$T$, we first count the number
of~$\tO(\Lambda)$-orbits.  Any such element~$h$ can again be expressed
as~$\gamma a x+b\delta$. By Eichler's criterion, since the square is fixed, the
number of~$\tO(\Lambda)$-orbits is just the number of possible~$h_*=
\frac{b\cdot 2\tilde m}{\gamma}\delta_*$, or equivalently, the number of
possible remainders of~$b$ modulo~$\gamma$. We thus express this number as the
product~$\prod_{p\mid\gamma}M_p$, where~$M_p$ is the number of possible
remainders of~$b$ modulo~$p^\beta$.

For~$p\ge3$, if $\beta\le\alpha/2$, then we only need $\gcd(b,p)=1$, thus~$M_p$
is equal to~$(p-1)p^{\beta-1}$; if~$\beta>\alpha/2$, then the equation
$b^2\equiv-n/\tilde m\pmod{p^{2\beta-\alpha}}$ has two solutions,
thus~$M_p$ is equal to~$2p^{\alpha-\beta}$.

For~$p=2$, as $\gcd(b,p)=1$, we see first that~$b$ is necessarily odd.
If~$\beta\le\alpha/2+1$, we will show that this is also sufficient,
so~$M_2$ is equal to~$2^{\beta-1}$. To prove this, we distinguish three cases:
if~$\beta\le\alpha/2$, it is clear that~$b^2\tilde m+n$ is a multiple of
$2^{2\beta}$; if $\beta=\alpha/2+1/2$, then $v_p(\tilde m)=v_p(n)=\alpha$ and
$b^2\tilde m+n$ is a multiple of $2^{\alpha+1}=2^{2\beta}$; if
$\beta=\alpha/2+1$, then $v_p(\tilde m)=v_p(n)=\alpha$ and $-n/\tilde
m\equiv1\pmod4$, so $b^2\tilde m+n$ is a
multiple of $2^{\alpha+2}=2^{2\beta}$. If $\beta>\alpha/2+1$, the equation
$b^2\equiv1\pmod{2^{2\beta-\alpha}}$ has two solutions
modulo~$2^{2\beta-\alpha-1}$, so $M_2$ is equal to~$2\times2^{\alpha+1-\beta}$.

To conclude, as Lemma~\ref{lemma:number_tO_orbits} shows that
each~$\O(\Lambda)$-orbit~$T$ contains~$2^{\tilde\rho(\gamma)}$
different~$\tO(\Lambda)$-orbits, the number
of~$T$ is given by $\prod_{p\mid\gamma}M_P$ divided
by~$2^{\tilde\rho(\gamma)}$. We let
$N_p=M_p/2$ for~$p\ge3$, $N_2=M_2/2$ if $v_2(\gamma)\ge2$, and $N_2=M_2=1$ if
$v_2(\gamma)=1$. This gives the desired formula.
\end{proof}

For completeness, we also provide the results for $\OG_6$ and $\OG_{10}$.

\begin{proposition}
Let $n$ and $\gamma$ be positive integers.
For the $\OG_6$-type and the $\OG_{10}$-type, a polarization type $T$ is
uniquely determined by its square $2n$ and divisibility $\gamma$.
\begin{itemize}
\item For the $\OG_6$-type, such $T$ exists if and only if $\gamma=1$, or
$\gamma=2$ and $n\equiv 2,3\pmod 4$;
\item for the $\OG_{10}$-type, such $T$ exists if and only if $\gamma=1$, or
$\gamma=3$ and $n\equiv 6\pmod 9$.\qedhere
\end{itemize}
\end{proposition}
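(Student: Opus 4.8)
The plan is to translate the statement into lattice theory, following the pattern of Lemma~\ref{lemma:number_tO_orbits} and Proposition~\ref{prop:numberoftau}. For both the $\OG_6$- and $\OG_{10}$-types the monodromy group is $\O^+(\Lambda)$, and the argument in Proposition~\ref{prop:numberoftau} shows that for a primitive $h$ of positive square one has $\O^+(\Lambda)h=\O(\Lambda)h=T$; hence a polarization type is nothing but an $\O(\Lambda)$-orbit of such an $h$. By Eichler's criterion (Proposition~\ref{prop:eichler}) the $\tO(\Lambda)$-orbit of a primitive $x$ is determined by $x^2$ and $x_*\in D(\Lambda)$, and since $\chi\colon\O(\Lambda)\to\O(D(\Lambda))$ is surjective---Proposition~\ref{prop:chisurj} applies, both lattices being even, indefinite and of rank far larger than the number of generators of $D(\Lambda)$ plus two---the $\O(\Lambda)$-orbit $T$ is determined by $x^2$ together with the $\O(D(\Lambda))$-orbit of $x_*$. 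So the proposition will follow once I understand the finite group $\O(D(\Lambda))$, its action, and the compatibility $q(x_*)\equiv x^2/\gamma^2\pmod{2\bZ}$ forced by $x_*=[x/\gamma]$, where $\gamma=\div(x)$ equals the order of $x_*$.

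First I would compute the discriminant form explicitly. For $\OG_6$, $D(\Lambda)=(\bZ/2\bZ)^2$ is carried by the two copies of $\langle-2\rangle$; with $g_1,g_2$ the classes of the halved generators one gets $q(g_1)=q(g_2)=-\tfrac12$ and $q(g_1+g_2)=-1$ in $\bQ/2\bZ$, so $\O(D(\Lambda))\cong\bZ/2\bZ$ interchanges $g_1$ and $g_2$ and fixes $g_1+g_2$. For $\OG_{10}$, $D(\Lambda)=\bZ/3\bZ$ comes from $\begin{psmallmatrix}-6&3\\3&-2\end{psmallmatrix}$; taking $g=[u/3]$ with $u^2=-6$ gives $q(g)=-\tfrac23$, and $\O(D(\Lambda))=\{\pm1\}$. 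Since $\gamma$ is the order of $x_*$, only $\gamma\in\{1,2\}$ can occur for $\OG_6$ and only $\gamma\in\{1,3\}$ for $\OG_{10}$.

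Uniqueness is then immediate. For $\gamma=1$ one has $x_*=0$; for the maximal divisibility, the classes of the correct order compatible with a fixed value of $q$ form a single $\O(D(\Lambda))$-orbit---the two value-$(-\tfrac12)$ elements $g_1,g_2$ are swapped for $\OG_6$, and the two nonzero elements of $\bZ/3\bZ$ are swapped by $-1$ for $\OG_{10}$---so there is at most one $T$ per pair $(2n,\gamma)$. The necessary congruences come from the compatibility relation: for $\OG_6$ with $\gamma=2$ it reads $n/2\equiv q(x_*)\pmod{2\bZ}$ with $q(x_*)\in\{-\tfrac12,-1\}$, yielding $n\equiv3\pmod4$ (value $-\tfrac12$) or $n\equiv2\pmod4$ (value $-1$); for $\OG_{10}$ with $\gamma=3$ it reads $2n/9\equiv-\tfrac23\pmod{2\bZ}$, i.e. $n\equiv6\pmod9$. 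The divisibility-$1$ type exists for every $n$.

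The step that actually requires an argument is the converse---that these congruences suffice to produce a primitive vector of the prescribed square, divisibility, and class, since the discriminant datum alone does not guarantee existence. Here I would peel off one hyperbolic plane $U=\langle e,f\rangle$ and write down $x=\gamma e+\beta f+w$, where $w$ is a fixed lift of the target class (one of the $\langle-2\rangle$-generators for $\OG_6$, or $u$ for $\OG_{10}$) and $\beta$ is chosen to make $x^2=2n$; one then checks that $\div(x)=\gcd(\gamma,\beta,\dots)$ equals exactly $\gamma$ precisely when the congruence on $n$ holds (it forces $\beta$ to be divisible by $\gamma$), and that $x_*=[x/\gamma]$ is the intended class. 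Arranging the divisibility to be exactly $\gamma$ rather than a proper divisor is the only delicate point; once it is handled, the surjectivity of $\chi$ and the triviality of the $\O(D(\Lambda))$-orbit structure make the rest automatic.
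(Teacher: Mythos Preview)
Your proof is correct. The approach parallels the paper's but is organized more abstractly: for uniqueness you invoke the surjectivity of $\chi\colon\O(\Lambda)\to\O(D(\Lambda))$ (Proposition~\ref{prop:chisurj}) to reduce to the $\O(D(\Lambda))$-orbit of $x_*$, whereas the paper works in coordinates, reads off the possible values of $h_*$ directly from the equation $h^2=2n$, and then exhibits the merging isometries explicitly (the swap of the two $\langle-2\rangle$ summands for $\OG_6$, and $-\Id$ for $\OG_{10}$). Likewise, you obtain the congruences from the compatibility $q(x_*)\equiv x^2/\gamma^2\pmod{2\bZ}$, while the paper extracts them from the coordinate relation. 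Both routes are short and equivalent; yours avoids the case-split on $h_*$ at the cost of appealing to Proposition~\ref{prop:chisurj}. Your existence sketch via $x=\gamma e+\beta f+w$ is correct---the congruence on $n$ forces $\gamma\mid\beta$, and primitivity together with $\div(w)=\gamma$ then give $\div(x)=\gamma$---but in a final write-up you should spell out the two choices of $w$ for $\OG_6$ (namely $w=u$ when $n\equiv3\pmod4$ and $w=u+v$ when $n\equiv2\pmod4$).
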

\begin{proof}
In both cases, since the lattice $\Lambda$ contains orthogonal copies of
$U$, the existence of a polarization type of square $2n$ and divisibility 1 is
clear, and the uniqueness follows from Eichler's criterion.

For the $\OG_6$-type, we write $u$ and $v$ for the two generators with square
$-2$ so $\Lambda=\Lambda_0\oplus\bZ u\oplus \bZ v$. Each primitive element $h$
of divisibility 2 can be written as
\[
h=2 ax+bu+cv,
\]
where $x\in\Lambda_0$ is primitive with $x^2=2d$ and $a,b,c,d\in \bZ$,
such that $\gcd(2a,b,c)=1$. In particular, $b$ and $c$ cannot be both even,
and the class $h_*$ is given by $(\bar b,\bar c)\in(\bZ/2\bZ)^2$.
Suppose that~$h$ is of square~$2n$. We obtain the relation
\[
2n=h^2=8a^2d-2b^2-2c^2,
\]
and we may deduce that $4\mid n+b^2+c^2$. If $n\not\equiv
2,3\pmod4$ there are no integer solutions. If $n\equiv 2\pmod 4$, then $b$ and
$c$ must both be odd, so $h_*=(\bar 1,\bar 1)$ and by Eichler's criterion all
such $h$ lie in the same $\tO(\Lambda)$-orbit, so the $\O(\Lambda)$-orbit is
also unique. If $n\equiv 3\pmod 4$, then $b$ and $c$ must be one odd one even,
so $h_*$ can either be $(\bar 1,\bar0)$ or $(\bar 0,\bar 1)$, and by Eichler's
criterion there are two $\tO(\Lambda)$-orbits, but the map that interchanges
the coordinates $u$ and $v$ is an isometry, so these two lie in the same
$\O(\Lambda)$-orbit, and again we get the uniqueness.

For the $\OG_{10}$-type, we similarly write $u$ and $v$ for the two generators
with matrix $\begin{psmallmatrix}-6&3\\3&-2\end{psmallmatrix}$, so
$\Lambda=\Lambda_0\oplus\bZ u\oplus \bZ v$. Each primitive element $h$ of
divisibility 3 can be written as
\[
h=3 ax+bu+3cv,
\]
where $x\in\Lambda_0$ is primitive with $x^2=2d$ and $a,b,c,d\in \bZ$,
such that $\gcd(3a,b,3c)=1$. In particular, $b$ is not divisible by 3,
and the class $h_*$ is given by $\bar b\in \bZ/3\bZ$.
Suppose that~$h$ is of square~$2n$. We obtain the relation
\[
2n=h^2=18a^d-6b^2+18bc-18c^2,
\]
and we may deduce that $9\mid n+3b^2$, so we must have $n\equiv 6\pmod 9$.
By Eichler's criterion there are two $\tO(\Lambda)$-orbits depending on the
value $h_*\in D(\Lambda)=\bZ/3\bZ$, but $-\Id$ interchanges the two non-zero
classes in $D(\Lambda)$ so again the two lie in the same $\O(\Lambda)$-orbit.
\end{proof}

\section{Image of the period map}\label{sec:image}
We will now study the image of the polarized period map.
For all known deformation types, the complement of the image in the period domain
can be shown to be a finite union of divisors: we will give explicit numerical
conditions describing these divisors.
The image of the period map is closely related to the determination of the
ample cone, which has been settled for all known deformation types, so we first
review the results.

Recall that on $H^{1,1}(X,\bR)$ the Beauville--Bogomolov--Fujiki form induces a
quadratic form of signature $(1,b_2-3)$, so the cone of positive classes has
two connected components, and we call the one containing a Kähler class the
\emph{positive cone} and denote it by $\cC_X$. The cone of all Kähler classes
sits inside $\cC_X$ and is denoted by $\cK_X$. We also consider the
\emph{birational Kähler cone} $\cB\cK_X$, which is the union $\bigcup
f^{-1}\cK_{X'}$ over all birational maps $f$ from $X$ to some other hyperkähler
manifold $X'$.
The Néron--Severi group~$\NS(X)$ is a sublattice~$H^2(X,\bZ)\cap
H^{1,1}(X,\bR)$ inside~$H^2(X,\bZ)$.

We have the following crucial notion: a divisor $D$ on $X$ is called a
\emph{wall divisor}, if $D^2<0$ and $f(D^\perp)\cap \cB\cK_X=\emptyset$ for
all monodromy operators $f$ (\cf \cite[Definition~1.2]{Mongardi:note} and
\cite[Definition~1.13]{AmerikVerbitsky}).
The property of being a wall divisor is stable under parallel transport
operators \cite[Theorem~3.1]{Mongardi:note}.
\begin{theorem}[Mongardi]
Let $(X,\eta)$ and $(X',\eta')$ be two marked hyperkähler manifolds lying in
the same connected component $\cM^0_\marked$ of the marked moduli space. Let
$D\in\NS(X)$ and $D'\in \NS(X')$ be divisors such that $\eta^{-1}\circ
\eta(D)=D'$. Then $D$ is a wall divisor on $X$ if and only if $D'$ is a wall
divisor on $X'$.
\end{theorem}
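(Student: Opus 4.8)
The plan is to reduce the statement to the invariance of the wall divisor property under parallel transport operators, cited just above as \cite[Theorem~3.1]{Mongardi:note}. The key observation is that the composite isometry $g\coloneqq\eta'^{-1}\circ\eta\colon H^2(X,\bZ)\simto H^2(X',\bZ)$ (so that the hypothesis reads $g(D)=D'$) is itself a parallel transport operator. Indeed, since $(X,\eta)$ and $(X',\eta')$ lie in the same connected component $\cM^0_\marked$, they are joined by a path $\gamma$ in $\cM^0_\marked$, which arises from a smooth proper family $\pi\colon\cX\to B$ together with a trivialization of the local system $R^2\pi_*\bZ$ restricting to $\eta$ and $\eta'$ at the two endpoints. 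Because this trivialization is flat, the parallel transport along $\gamma$ in $R^2\pi_*\bZ$ equals $\eta'^{-1}\circ\eta=g$; hence $g$ is a parallel transport operator in the sense of the definition above. Granting this, the conclusion is immediate: $g$ is a parallel transport operator with $g(D)=D'$, so $D'$ is a wall divisor on $X'$ if and only if $D$ is a wall divisor on $X$.

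To make the underlying invariance transparent I would record why the two defining conditions transfer. First, $g$ is an isometry for the Beauville--Bogomolov--Fujiki forms, so $(D')^2=g(D)^2=D^2<0$ and the sign condition is preserved. Second, conjugation by a parallel transport operator carries monodromy operators to monodromy operators: if $f\in\Mon(X)$, then $g\circ f\circ g^{-1}$ is a composition $H^2(X',\bZ)\to H^2(X,\bZ)\to H^2(X,\bZ)\to H^2(X',\bZ)$ of parallel transport operators, hence a monodromy operator of $X'$, and every element of $\Mon(X')$ arises this way, so that $\Mon(X')=g\,\Mon(X)\,g^{-1}$. Using $(D')^\perp=g(D^\perp)$, one then rewrites $f'\big((D')^\perp\big)=g\big(f(D^\perp)\big)$ for $f'=g\circ f\circ g^{-1}$, so the monodromy quantifiers on $X$ and $X'$ match up under $g$.

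The one genuinely delicate point is the behaviour of the birational Kähler cone: $\cB\cK_X\subset H^{1,1}(X,\bR)$ depends on the complex structure of $X$ and is \emph{not} literally carried to $\cB\cK_{X'}$ by the lattice isometry $g$, so the naive substitution $g(\cB\cK_X)=\cB\cK_{X'}$ is unavailable. This is exactly the content of Mongardi's theorem, and is where the real work lies: across the family $\pi$ the walls of the monodromy-translated birational Kähler cone are cut out precisely by the orthogonal hyperplanes of the wall divisors, so the emptiness of $f(D^\perp)\cap\cB\cK_X$ is a deformation-invariant, hence parallel-transport-invariant, condition. I would therefore treat the identification of $g$ as a parallel transport operator as the routine step and invoke \cite[Theorem~3.1]{Mongardi:note} for this wall-and-cone analysis rather than reproving it, as it rests on the deformation theory of the birational Kähler cone in the style of Amerik--Verbitsky \cite{AmerikVerbitsky}.
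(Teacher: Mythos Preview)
Your approach is correct and matches the paper's: the paper does not prove this theorem but simply attributes it to Mongardi as \cite[Theorem~3.1]{Mongardi:note}, and your reduction---observing that $g=\eta'^{-1}\circ\eta$ is a parallel transport operator because $(X,\eta)$ and $(X',\eta')$ lie in the same component of $\cM^0_\marked$, then invoking the cited result---is exactly the intended translation between the two formulations. Your additional remarks on why the square and the monodromy-conjugation transfer are fine but unnecessary, and you are right to flag the birational Kähler cone as the substantive point deferred to Mongardi's paper.
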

Once we picked a connected component $\cM^0_\marked$, we may extend this notion
to elements of the lattice $\Lambda$: a class $\kappa\in\Lambda$ with
$\kappa^2<0$ is called a \emph{wall class}, if for all $(X,\eta)\in
\cM^0_\marked$ such that the class $\eta^{-1}(\kappa)$ is of type $(1,1)$, it
gives a wall divisor on $X$.
Clearly the property only depends on the $\Mon(\Lambda)$-orbit of $\kappa$.
Wall divisors give a chamber decomposition on the positive cone $\cC_X$, and
the Kähler cone $\cK_X$ is given by one of the chambers.

For $\KKK^{[m]}$-type and $\Kum_m$-type, a numerical characterization for wall
divisors is known.
Write as before $\tilde m=m-1$ for $\KKK^{[m]}$-type and $\tilde m=m+1$ for $\Kum_m$-type.
Recall that in these two cases, the lattice $\Lambda$ has the form
$\Lambda=\Lambda_0\oplus\bZ\delta$,
where $\Lambda_0$ is an even unimodular lattice containing three orthogonal
copies of $U$, and $\delta$ is of square $-2\tilde m$.
We also consider the \emph{Mukai lattice}
\[
\tL\coloneqq \Lambda_0\oplus U,
\]
which is even and unimodular.
For any vector~$v\in\tL$ of square~$2\tilde m$, the sublattice~$v^\perp$ is
isomorphic to $\Lambda$. Since all such~$v$ are in the same~$\O(\tL)$-orbit
due to the unimodularity of~$\tL$, we may fix $v=u_1+\tilde mu_2$, where
$\inner{u_1,u_2}$ is a copy of the hyperbolic plane~$U$, and identify~$\Lambda$ as
the sublattice~$v^\perp$. In particular we set~$\delta=u_1-\tilde mu_2$.

When $X$ is of $\KKK^{[m]}$-type or $\Kum_m$-type, there is an embedding
of~$H^2(X,\bZ)$ into~$\tL$, canonical up to the action of~$\O(\tL)$
(see \cite[Corollary 9.5]{Markman:survey} for $\KKK^{[m]}$-type, and
\cite[Theorem~4.9]{Wieneck} for $\Kum_m$-type). For any such
embedding, the orthogonal of its image is generated by a vector of
square~$2\tilde m$. So we can assume that the image is exactly~$\Lambda$, by
mapping one of these generators to the fixed~$v$ using some element
in~$\O(\tL)$.  In this way, we get a distinguished marking~$\eta\colon
H^2(X,\bZ)\simto \Lambda$, canonical up to the action
of~$\set{\pm\Id}\cdot\O(\tL,v)|_{\Lambda}$. By Proposition~\ref{prop:ghs3.4}, this
group is equal to~$\set{\pm\Id}\cdot \tO(\Lambda)=\hO(\Lambda)$. Therefore we
get the following result.
\begin{proposition}\label{prop:mukai}
Let~$X$ be a hyperkähler manifold of~$\KKK^{[m]}$-type or $\Kum_m$-type.  There is a
distinguished marking
\[
\eta\colon H^2(X,\bZ)\simto\Lambda\subset\tL,
\]
canonical up to the action of~$\hO(\Lambda)$. It induces an
isometry between the two discriminant groups~$D(H^2(X,\bZ))$
and~$D(\Lambda)\simeq\bZ/2\tilde m\bZ$, canonical up to a sign. In other words,
there is a canonical choice of a pair of generators~$\pm g$
for~$D(H^2(X,\bZ))$, mapped to~$\pm\delta_*$ under the isometry.
\end{proposition}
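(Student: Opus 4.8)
The plan is to construct $\eta$ by normalizing the canonical Mukai-lattice embedding, then to pin down exactly how much freedom remains; the discriminant statement will then follow formally. First I would invoke the primitive embedding $H^2(X,\bZ)\into\tL$ of Markman (for $\KKK^{[m]}$-type) and Wieneck (for $\Kum_m$-type), which is canonical up to the action of $\O(\tL)$. For any representative, the orthogonal complement of the image is a rank-one saturated sublattice, primitively generated by a vector of square $2\tilde m$. Since $\tL$ is even, unimodular, indefinite and contains at least two orthogonal copies of $U$, Eichler's criterion (Proposition~\ref{prop:eichler}) shows that its primitive vectors of a fixed square form a single $\O(\tL)$-orbit (the discriminant group is trivial, so $\tO(\tL)=\O(\tL)$ and the class $x_*$ imposes no condition). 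Post-composing with a suitable element of $\O(\tL)$, I may therefore carry this generator to the fixed vector $v=u_1+\tilde m u_2$, making the image equal to $v^\perp=\Lambda$; this produces the marking $\eta\colon H^2(X,\bZ)\simto\Lambda$.

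The main point is to identify the residual ambiguity. Two markings $\eta_0,\eta_1$ built this way differ by $\eta_1\circ\eta_0^{-1}$, which is the restriction to $\Lambda$ of an element of $\O(\tL)$ stabilizing $\Lambda=v^\perp$; being an isometry preserving the orthogonal line, it sends the primitive generator $v$ to $\pm v$, the sign recording the two possible choices of primitive generator made during the normalization step. Conversely, taking $\eta_1=\psi\circ\eta_0$ for $\psi\in\O(\tL,v)$ realizes every element of $\O(\tL,v)$, and choosing the opposite normalization realizes $-\Id$, which sends $v\mapsto -v$ while stabilizing $\Lambda$ setwise. Hence the stabilizer of the line $\bZ v$ is exactly $\set{\pm\Id}\cdot\O(\tL,v)$, and the residual ambiguity is its restriction to $\Lambda$. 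By Proposition~\ref{prop:ghs3.4}, applied to the even unimodular lattice $\tL$ and the vector $v$, one has $\O(\tL,v)|_{v^\perp}=\tO(\Lambda)$; combined with $(-\Id)|_\Lambda=-\Id_\Lambda$ this restricted group equals $\set{\pm\Id}\cdot\tO(\Lambda)$. Finally, since $\chi(-\Id_\Lambda)=-\Id_{D(\Lambda)}$, I would check the identity $\set{\pm\Id}\cdot\tO(\Lambda)=\chi^{-1}(\set{\pm1})=\hO(\Lambda)$, which gives the asserted canonicity up to $\hO(\Lambda)$.

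For the discriminant statement I would transport this ambiguity through $\chi$. The marking $\eta$ identifies $D(H^2(X,\bZ))$ with $D(\Lambda)\simeq\bZ/2\tilde m\bZ$, and any two choices differ by an element of $\hO(\Lambda)$, whose image under $\chi$ lies in $\set{\pm\Id_{D(\Lambda)}}$ by the very definition of $\hO(\Lambda)$; hence the induced isometry of discriminant groups is canonical up to a global sign. As $\delta$ has divisibility $2\tilde m$, the class $\delta_*$ generates $D(\Lambda)$, so pulling back the pair $\pm\delta_*$ along $\eta$ yields a canonically defined pair of generators $\pm g$ of $D(H^2(X,\bZ))$. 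The only genuinely delicate point, to which I would pay the most attention, is the sign bookkeeping in the second paragraph: one must verify that the two choices of primitive orthogonal generator account for precisely the factor $\set{\pm\Id}$, upgrading $\tO(\Lambda)$ to $\hO(\Lambda)$ without introducing any further isometries. Everything else reduces to a direct appeal to the cited embedding theorems and to Proposition~\ref{prop:ghs3.4}.
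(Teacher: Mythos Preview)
Your proposal is correct and follows essentially the same route as the paper: invoke the Markman/Wieneck embedding into $\tL$ (canonical up to $\O(\tL)$), normalize so that the orthogonal generator is the fixed $v$, identify the residual ambiguity as $\set{\pm\Id}\cdot\O(\tL,v)|_\Lambda$, and then apply Proposition~\ref{prop:ghs3.4} to conclude this equals $\set{\pm\Id}\cdot\tO(\Lambda)=\hO(\Lambda)$. You supply a bit more detail than the paper (the explicit appeal to Eichler's criterion for transitivity on square-$2\tilde m$ vectors in $\tL$, and the sign bookkeeping for the line stabilizer), but the argument is the same.
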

Any monodromy operator must respect the choice of the pair of generators~$\pm g$,
so the monodromy group~$\Mon(\Lambda)$ must lie in the subgroup~$\hO(\Lambda)$,
which is indeed the case.

We now give the description of the Kähler cone $\cK_X$ for these two cases.
The $\KKK^{[m]}$-case is due to the results of Bayer--Macrì,
Bayer--Hassett--Tschinkel, and Mongardi (note that in
\cite{BayerHassettTschinkel}, the
manifold $X$ is assumed to be projective; this assumption can be removed using
\cite{Mongardi:note} or \cite[Theorem~1.17 and 1.19]{AmerikVerbitsky}).
The $\Kum_m$-case is due to Yoshioka \cite{Yoshioka} (see also
\cite{Mongardi:mon}).
\begin{theorem}[Bayer--Macrì, Bayer--Hassett--Tschinkel, Mongardi;
Yoshioka]
\label{thm:BHT}
Let~$X$ be a hyperkähler manifold of~$\KKK^{[m]}$-type or $\Kum_m$-type. Under the
embedding
\[
\eta\colon H^2(X,\bZ)\simto\Lambda\into\tL,
\]
we denote by~$\tL_{\alg}$
the saturation of~$\eta(\NS(X))\oplus\bZ v$.
Consider the set
\[
S\coloneqq
\begin{cases}
\setmid{s\in\tL}{s^2\ge-2,\ |(s,v)|\le \tilde m=m-1}\setminus\set{0}&\text{if $X$ is of
$\KKK^{[m]}$-type;}\\
\setmid{s\in\tL}{s^2\ge0,\ 0<|(s,v)|\le \tilde m=m+1}&\text{if $X$ is of
$\Kum_m$-type.}
\end{cases}
\]
Then the Kähler cone $\cK_X$ is one of the connected components of the positive
cone $\cC_X$ cut out by the hyperplanes~$s^\perp$ in~$\NS(X)_\bR$, for
all~$s\in S\cap \tL_\alg$.
\end{theorem}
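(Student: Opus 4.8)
The statement is a reformulation of the wall descriptions of Bayer--Macrì and Bayer--Hassett--Tschinkel \cite{BayerMacri, BayerHassettTschinkel} in the $\KKK^{[m]}$-case and of Yoshioka \cite{Yoshioka} in the $\Kum_m$-case, so the plan is to deduce it from those results in two steps: a reduction to projective manifolds realized as moduli spaces on a surface, and a translation of the numerical wall conditions into the set $S$. The starting point is the chamber structure recalled above: the Kähler cone $\cK_X$ is the chamber of the decomposition of $\cC_X$ by the hyperplanes $\kappa^\perp$ orthogonal to the wall classes $\kappa\in\NS(X)$ that contains a reference Kähler class. Since the set $S\cap\tL_\alg$ and the lattice $\tL_\alg$ depend on $(X,\eta)$ only through the sublattice $\eta(\NS(X))\subseteq\Lambda$, and since being a wall class is by definition a property of the whole component $\cM^0_\marked$ (deformation-invariant by \cite{Mongardi:note}, as recalled above), it is enough to match, for each negative class $\kappa$, the condition ``$\kappa$ is a wall class'' with the existence of some $s\in S\cap\tL_\alg$ whose orthogonal projection to $\NS(X)_\bR$ is proportional to $\kappa$.

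First I would reduce to a convenient projective model. Testing whether a fixed $\kappa\in\Lambda$ with $\kappa^2<0$ is a wall class may be carried out on any $(X,\eta)\in\cM^0_\marked$ for which $\eta^{-1}(\kappa)$ is of type $(1,1)$; using the surjectivity of the marked period map together with the projectivity criterion, I may choose such an $X$ projective (enlarging the algebraic lattice to contain a positive class). Through the canonical Mukai-lattice embedding of Proposition~\ref{prop:mukai} (resting on Markman for $\KKK^{[m]}$ and Wieneck for $\Kum_m$) and the identification of $\Mon(\Lambda)$ in Table~\ref{table:mon}, such an $X$ can be taken to be a smooth projective moduli space of Bridgeland-stable objects of Mukai vector $v$ on a K3 surface (respectively the Albanese fiber of such a moduli space on an abelian surface), in such a way that $\NS(X)=v^\perp\cap\tL_\alg$ compatibly with $\eta$. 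For these moduli spaces the nef cone is described by \cite{BayerMacri, BayerHassettTschinkel} (respectively \cite{Yoshioka}) exactly as the chamber of $\cC_X$ cut out by the hyperplanes associated to the actual walls, each induced by a class $s$ of the algebraic Mukai lattice; the passage from the projective case to the general Kähler case is then supplied by the deformation-invariance of wall divisors \cite{Mongardi:note, AmerikVerbitsky}.

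What remains, and what I expect to be the main obstacle, is the purely numerical bookkeeping of the last translation: matching the Bayer--Macrì/Bayer--Hassett--Tschinkel and Yoshioka criteria---usually phrased in terms of rank-two hyperbolic sublattices of $\tL_\alg$ containing $v$, or of spherical and isotropic classes and their pairings with $v$---with the single clean condition defining $S$. Concretely, one must verify that a class $s$ produces an \emph{actual} wall precisely when $s^2\ge-2$ and $|(s,v)|\le\tilde m=v^2/2=m-1$ in the K3 case, and when $s^2\ge0$ and $0<|(s,v)|\le\tilde m=m+1$ in the abelian case, the differing inequalities reflecting the presence (respectively absence) of $(-2)$-classes on a K3 (respectively abelian) surface, and that intersecting with $\tL_\alg$ imposes exactly the algebraicity needed. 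Finally, since these conditions involve $s$ only through $s^2$ and $(s,v)$, they are invariant under $\O(\tL,v)$ and hence unaffected by the $\hO(\Lambda)$-ambiguity of the embedding in Proposition~\ref{prop:mukai}, so the resulting arrangement of walls in $\NS(X)_\bR$ is intrinsic to $(X,\eta)$; the reduction steps being essentially formal, the genuine content is imported from the cited wall-crossing results.
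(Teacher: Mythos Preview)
The paper does not give its own proof of this theorem: it is stated with attribution to Bayer--Macr\`i, Bayer--Hassett--Tschinkel, Mongardi, and Yoshioka, and immediately followed by the remark on $\O(\tL)$-invariance of $S$, with no proof environment. Your proposal is therefore not competing with any argument in the paper; it is a perfectly reasonable outline of how one would extract the statement from the cited sources---reduce via deformation-invariance of wall divisors \cite{Mongardi:note, AmerikVerbitsky} to a Bridgeland moduli space on a K3 or abelian surface, then invoke the numerical wall descriptions of \cite{BayerMacri, BayerHassettTschinkel, Yoshioka}---and your closing observation that $S$ is $\O(\tL,v)$-invariant is exactly the point the paper makes in the sentence following the theorem.
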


Note that the particular choice of the embedding~$\eta$ does not matter here:
because~$\eta$ is unique up to the action of~$\O(\tL)$, and the set~$S$ is
clearly~$\O(\tL)$-invariant.

This description depends on the larger lattice~$\tL$, which is inconvenient to
work with. Note that each~$s\in S$ together with $v$ span a rank-2 sublattice
of $\tL$, so we may consider its intersection with $\Lambda$, which is of rank
1, and pick a generator $\kappa\in\Lambda$. The hyperplane $s^\perp$
can then also be expressed as $\kappa^\perp$. Since the class $\kappa$ lies in
$\NS(X)$ if and only if $s$ lies in $\tL_\alg$, we may conclude that
all wall classes arise this way from some $s\in S$. We now give a
lattice theoretical result, which will yield a numerical criterion for wall
classes $\kappa\in \Lambda$ that is intrinsic to the smaller lattice~$\Lambda$.
\begin{proposition}\label{prop:kappa}
Let $\tL$ be a lattice of the form $\Lambda_0\oplus U$, where $\Lambda_0$ is an
even unimodular lattice and $U$ is the hyperbolic plane with basis $u_1,u_2$.
Let $v = u_1+\tilde m u_2$ and $\delta=u_1-\tilde m u_2$, and let $\Lambda$ be the
sublattice $v^\perp=\Lambda_0\oplus\bZ\delta$.
Let $\kappa\in \Lambda$ be a primitive vector and write~$\kappa^2=2l$ and
$\kappa_*=k\delta_*\in D(\Lambda)\simeq \bZ/2\tilde m\bZ$, where $|k|\le \tilde
m$. Set~$d\coloneqq\gcd(2\tilde m,k)$.
\begin{enumerate}
\item[(i)] There is a unique integer $c$ such that
\[
l=c\left(\frac{2\tilde m}d\right)^2-\tilde m\left(\frac kd\right)^2.
\]
\item[(ii)] Let $a\in\bZ_{\ge 0}$ be a non-negative integer.
There is a non-zero element $s\in \tL$ contained in the saturation of the
sublattice generated by $\kappa$ and $v$, such that
\[
s^2\ge -2a,\quad |(s,v)|\le \tilde m,
\]
if and only if the integer~$c$ in (i) satisfies $c\ge -a$. When this
is the case, there is one such element~$s$ with~$s^2=2c$
and~$(s,v)=-k$.\qedhere
\end{enumerate}
\end{proposition}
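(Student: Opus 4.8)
The plan is to reduce everything to the rank-two saturated sublattice $P\subset\tL$ spanned over $\bQ$ by $\kappa$ and $v$, and to produce an explicit $\bZ$-basis of $P$ adapted to $v$. I begin with bookkeeping: write $\kappa=\kappa_0+n\delta$ with $\kappa_0\in\Lambda_0$ and $n\in\bZ$. Since $\delta^2=-2\tilde m$, the lattice $\Lambda_0$ is unimodular, and $\kappa_*=k\delta_*$ has order $2\tilde m/d$ in $D(\Lambda)$, I first record that $\div(\kappa)=2\tilde m/d=:\gamma$, that $\gamma\mid\kappa_0$ (so $\kappa_0=\gamma\mu$ with $\mu\in\Lambda_0$, using that $\kappa/\gamma\in\Lambda^\vee$), and that $k/d\equiv n\pmod\gamma$.

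Part (i) then follows by substituting $\kappa^2=\kappa_0^2-2\tilde m n^2=2l$. Writing $k/d=n+\gamma t$ one computes $l+\tilde m(k/d)^2=\tfrac12\gamma^2\mu^2+\gamma^2 dnt+\tilde m\gamma^2 t^2$, which is divisible by $\gamma^2=(2\tilde m/d)^2$ precisely because $\Lambda_0$ is even (so $\mu^2$ is even) and $d\gamma=2\tilde m$; the quotient is the required integer $c=\tfrac12\mu^2+dnt+\tilde m t^2$, and it is unique since $(2\tilde m/d)^2\neq0$.

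For part (ii) I would exhibit the vector $\sigma_0=\mu-nd\,u_2-tv\in\tL$. A direct computation gives $\sigma_0\in P$, with $(\sigma_0,v)=-k$ and $\sigma_0^2=2c$, and one checks that $\{v,\sigma_0\}$ is a $\bZ$-basis of $P$ (the integrality conditions defining $P$ force the $\kappa$-coefficient of any $s\in P$ to lie in $\tfrac1\gamma\bZ$). This already proves the final assertion and the ``if'' direction: when $c\ge-a$, the vector $\sigma_0$ satisfies $\sigma_0^2=2c\ge-2a$ and $|(\sigma_0,v)|=|k|\le\tilde m$. Writing a general $s=xv+y\sigma_0$ one finds $(s,v)=2\tilde m x-ky$ and $s^2=2\tilde m x^2-2kxy+2cy^2$, whose discriminant $4\tilde m c-k^2$ equals $d^2 l/\tilde m$.

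The ``only if'' direction is the crux. When $c\ge0$ it is vacuous, since the right-hand side $c\ge-a$ always holds for $a\ge0$; thus I may assume $c<0$, in which case $P$ is indefinite, and it suffices to show that \emph{every} nonzero $s\in P$ with $|(s,v)|\le\tilde m$ satisfies $s^2\le2c$ (then $2c$ is the maximum, and $s^2\ge-2a$ forces $c\ge-a$). The inequality $s^2\le2c$ is equivalent to $\tilde m x^2-kxy\le|c|(y^2-1)$, and the constraint $|(s,v)|\le\tilde m$ pins $x$ to the nearest integer to $\theta:=ky/2\tilde m$. The main obstacle is that the crude estimate $(s,v)^2\le\tilde m^2$ is too lossy and the inequality is in fact tight at $|y|=1$; the way around this is the identity $\tilde m x^2-kxy=\tilde m\bigl((x-\theta)^2-\theta^2\bigr)$ together with a case split. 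If $|ky|\ge\tilde m$, then the right-hand side is at most $\tfrac{\tilde m^2-k^2y^2}{4\tilde m}\le0\le|c|(y^2-1)$; if $|ky|<\tilde m$, then $x=0$ is forced, so $s=y\sigma_0$ and $s^2=2cy^2\le2c$ because $c<0$. The boundary case $|y|=1$ yields equality, recovering $\sigma_0$, which completes the proof.
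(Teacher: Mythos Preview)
Your proof is correct and follows the same approach as the paper: your $\sigma_0=\mu-nd\,u_2-tv$ is exactly the paper's $s=(d\kappa-kv)/(2\tilde m)$, and both arguments show that this vector maximizes $s^2$ among nonzero $s\in P$ with $|(s,v)|\le\tilde m$ when $c<0$. The only difference is cosmetic bookkeeping in the converse---the paper writes $s'=\lambda\sigma_0+\mu v$, normalizes to $\lambda\ge1$ and $\mu\ge0$, and reads off $s'^2\le\lambda^2\cdot 2c$ from one identity, whereas you complete the square and split on $|ky|\gtrless\tilde m$---but the content is the same inequality.
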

\begin{proof}
First we may assume that~$k\ge0$ by changing $\kappa$ to $-\kappa$ if needed.
Since~$\kappa_*=[\kappa/\div(\kappa)]$ is equal to~$k\delta_*=[k\delta/2\tilde
m]$
in~$D(\Lambda)$, we may write
\[
\frac{\kappa}{\div(\kappa)}=x+b\delta+\frac{k\delta}{2\tilde m},
\]
where $x\in\Lambda_0$ and~$b\in\bZ$. Since~$\kappa$ is integral and
primitive, we see that~$\div(\kappa)=\frac{2\tilde m}{d}$.
Now we let
\[
s\coloneqq\frac{d\kappa-kv}{2\tilde m}=x+b\delta-ku_2,
\]
which is an integral class in~$\tL\setminus\set{0}$, with $|(s,v)|=|{-k}|\le
\tilde m$.  Let $s^2=2c$. We can easily verify that $c$ is the integer satisfying
the equality in (i). Moreover, if $c\ge-a$, the vector~$s$ provides the element
we need in (ii).

Conversely, suppose that there is some other vector~$s'$ satisfying
the condition in (ii), then we will show that~$c\ge-a$ so the vector~$s$ itself
satisfies the condition. We let ${s'}^2=2c'$ with $c'\ge-a$, and $(s',v)=-k'$
with $|k'|\le \tilde m$. Since $2\tilde ms'$ lies in the direct sum~$\bZ\kappa
\oplus\bZ v$, there exists a unique integer~$d'$ such that
\[
2\tilde ms'=d'\kappa-k'v \quad\text{or equivalently}\quad
s'=\frac{d'\kappa-k'v}{2\tilde m}.
\]
As~$\kappa$ is of divisibility $\frac{2\tilde m}{d}$ in $\Lambda$, there is some
$y\in \Lambda$ such that $(\kappa,y)=\frac{2\tilde m}{d}$. We then have
$(s',y)=\frac{d'}{d}$, so~$d$ divides~$d'$. Set $d'=\lambda d$. We must
have~$\lambda\ne0$: otherwise, $s'$ is equal to~$-\frac{k'}{2\tilde m}v$;
but~$|k'|\le \tilde m$, so~$s'$ can only be~$0$, which contradicts our
hypothesis. By changing $s'$ to $-s'$ if needed, we may suppose
that~$\lambda\ge 1$. Then by looking at the integral class~$s'-\lambda s$, we
have~$k'\equiv\lambda k\pmod{2\tilde m}$. Write~$k'=\lambda k-\mu\cdot 2\tilde
m$. Since~$k'\le \tilde m$, we must have $\mu\ge0$.  Then we have $s'=\lambda
s+\mu v$ and thus
\begin{align*}
{s'}^2&=\lambda^2s^2+2\lambda\mu(s,v)+\mu^2v^2\\
&=\lambda^2s^2-\mu(2\lambda k-\mu\cdot2\tilde m))\\
&=\lambda^2s^2-\mu(2k'+\mu\cdot2\tilde m)\le\lambda^2s^2,
\end{align*}
where the last inequality is due to~$k'\ge-\tilde m$ and~$\mu\ge 0$.  So we
get~$-a\le c'\le\lambda^2c$ for some $\lambda\ge 1$, and we may conclude that
$c\ge-a$.
\end{proof}

\begin{proposition}
\label{prop:ample_cone}
Let~$X$ be a hyperkähler manifold of~$\KKK^{[m]}$-type or $\Kum_m$-type. Let~$g$ be one of the
canonical generators of~$D(H^2(X,\bZ))$. The Kähler cone $\cK_X$ is one of
the components of the positive cone cut out by the hyperplanes~$\kappa^\perp$,
for all classes $\kappa\in\NS(X)$ satisfying the following numerical condition:
writing~$\kappa^2=2l$,~$\kappa_*=kg$ with~$0\le k\le \tilde m$,
and~$d=\gcd(2\tilde m,k)$, then
\begin{equation}\label{eq:wall}
\begin{cases}
l=c\left(\frac{2m-2}d\right)^2-(m-1)\left(\frac kd\right)^2\text{ for an
integer }-1\le c<\frac{k^2}{4(m-1)}&\text{if $X$ is of $\KKK^{[m]}$-type};\\
l=c\left(\frac{2m+2}d\right)^2-(m+1)\left(\frac kd\right)^2\text{ for an
integer }0\le c<\frac{k^2}{4(m+1)}&\text{if $X$ is of $\Kum_m$-type}.
\end{cases}
\end{equation}
\end{proposition}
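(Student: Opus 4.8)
The plan is to translate the Mukai-lattice description of the Kähler cone from Theorem~\ref{thm:BHT} into the intrinsic condition~\eqref{eq:wall} on~$\Lambda$, using Proposition~\ref{prop:kappa} as the bridge. First I would set up the dictionary between a cutting vector~$s\in\tL$ and a wall class~$\kappa\in\Lambda$. Decomposing $\tL_\bR=\Lambda_\bR\oplus\bR v$ orthogonally, any $s$ has the same orthogonal hyperplane in~$\NS(X)_\bR$ as its $\Lambda$-component, which is proportional to the generator~$\kappa$ of the rank-one lattice $\langle s,v\rangle\cap\Lambda$; hence $s^\perp$ and $\kappa^\perp$ cut out the same wall. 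As recorded just before Proposition~\ref{prop:kappa}, the class~$\kappa$ lies in~$\NS(X)$ exactly when $s\in\tL_\alg$, and conversely the saturation of $\langle\kappa,v\rangle$ sits inside~$\tL_\alg$ once $\kappa\in\NS(X)$. After replacing $\kappa$ by~$-\kappa$ if needed (which does not change $\kappa^\perp$) and fixing the canonical generator $g=\delta_*$ from Proposition~\ref{prop:mukai}, I may normalize so that $0\le k\le\tilde m$.

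With this dictionary in place, the existence of a vector $s\in S$ producing a given~$\kappa$ becomes a condition on the integer~$c$ of Proposition~\ref{prop:kappa}(i). For the $\KKK^{[m]}$-type the defining inequality of~$S$ is $s^2\ge-2$, so I apply Proposition~\ref{prop:kappa}(ii) with $a=1$: a nonzero~$s$ with $|(s,v)|\le\tilde m$ exists if and only if $c\ge-1$. For the $\Kum_m$-type the inequality is $s^2\ge0$, so the same result with $a=0$ gives $c\ge0$; the additional constraint $0<|(s,v)|$ in~$S$ translates to $k\ne0$, since the canonical representative furnished by Proposition~\ref{prop:kappa}(ii) has $(s,v)=-k$. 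This produces the lower bounds $-1\le c$ and $0\le c$ respectively.

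The upper bound is then a purely geometric observation. A hyperplane $\kappa^\perp$ meets the interior of the positive cone~$\cC_X$ precisely when the form restricted to~$\kappa^\perp$ is indefinite, i.e.\ when $\kappa^2<0$; if $\kappa^2\ge0$ the form is negative (semi)definite on $\kappa^\perp$, which therefore cuts nothing and may be discarded without altering the chamber decomposition. Substituting the formula $l=c(2\tilde m/d)^2-\tilde m(k/d)^2$ from Proposition~\ref{prop:kappa}(i), the condition $\kappa^2=2l<0$ is equivalent to $c<k^2/(4\tilde m)$, which is exactly the stated bound with $\tilde m=m-1$ or $m+1$. Note that in the $\Kum_m$-type case the range $0\le c<k^2/(4\tilde m)$ is empty when $k=0$, so it automatically enforces the $k\ne0$ condition observed above, whereas for the $\KKK^{[m]}$-type the value $k=0,\ c=-1$ survives and recovers the familiar divisibility-one $(-2)$-wall.

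I expect the first step to demand the most care: one must check that a single canonical~$s$ per~$\kappa$ is enough, and that the passage $s^\perp\leftrightarrow\kappa^\perp$ is compatible with the algebraicity equivalence $s\in\tL_\alg\iff\kappa\in\NS(X)$, so that Proposition~\ref{prop:kappa} can be applied uniformly to both the existence (``if'') and non-existence (``only if'') directions. Once this reduction is secured, the two remaining steps are the direct substitutions described above, and the formula~\eqref{eq:wall} follows by combining the lower bound from~$S$ with the upper bound coming from $\kappa^2<0$.
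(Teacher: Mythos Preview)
Your proposal is correct and follows essentially the same approach as the paper: combine Theorem~\ref{thm:BHT} with Proposition~\ref{prop:kappa} (taking $a=1$ for $\KKK^{[m]}$-type and $a=0$ for $\Kum_m$-type) to get the lower bound on~$c$, and read off the upper bound from the requirement $\kappa^2=2l<0$. Your treatment of the $k\ne 0$ issue in the $\Kum_m$-case is slightly more explicit than the paper's, but the underlying observation---that the range $0\le c<k^2/(4\tilde m)$ is empty when $k=0$---is the same.
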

\begin{proof}
The $\KKK^{[m]}$-case is obtained by combining Theorem~\ref{thm:BHT} and
Proposition~\ref{prop:kappa} for $a=1$,
and the upper bound for $c$ comes from $\kappa^2=2l<0$. For the $\Kum_m$-case,
we use $a=0$ and we note that $k$ cannot take the value 0 because $l$ needs to
be negative.
So we will only consider $\kappa$ with $1\le k\le \tilde m = m+1$, and for such $\kappa$
we indeed get an element $s$ with $s^2\ge 0$ and $0<|(s,v)|=|{-k}|\le \tilde m
= m+1$.
\end{proof}

\begin{remark}\leavevmode
\begin{itemize}
\item To enumerate all the wall divisors, we let~$k$ run from~$0$ to~$\tilde m$
and for each~$k$, we let~$c$ run from~$-1$ or $0$
to~$\left\lceil\frac{k^2}{4\tilde m}\right\rceil-1$ to get the corresponding~$l$.
\item As an example, for~$\KKK^{[2]}$-type, the pair $(k,l)$ has three possibilities:
$(0,-1)$, $(1,-5)$, and $(1,-1)$. Thus we get $\kappa^2=-2$ and
$\div(\kappa)=1,2$, or
$\kappa^2=-10$ and $\div(\kappa)=2$. This was first conjectured in
\cite{HassettTschinkel}.
See also \cite{Mongardi:note}, where the cases of $\KKK^{[m]}$-type for $m\le4$ are worked
out; and \cite{HassettTschinkel:intersection}, where some examples for $\Kum_m$-type are given.
\item Analogous results for $\OG_6$ and $\OG_{10}$
are also established:
wall divisors on a hyperkähler manifold $X$ of~$\OG_6$-type are given by
elements $\kappa\in \NS(X)$ with $\kappa^2=-2$, or $\kappa^2=-4$ and
$\div(\kappa)=2$ \cite{MongardiRapagnetta};
wall divisors on a hyperkähler manifold $X$ of~$\OG_{10}$-type are given by
elements $\kappa\in \NS(X)$ with $0>\kappa^2\ge -4$, or $0>\kappa^2\ge -24$ and
$\div(\kappa)=3$ \cite{MongardiOnorati}.
\item In particular, the Kawamata--Morrison conjecture holds for all known
deformation types of hyperkähler manifolds by a result of Amerik--Verbitsky
\cite[Theorem~1.21]{AmerikVerbitsky}: for a given deformation type, since the
square of a wall class is bounded below, the automorphism group $\Aut(X)$ acts
on the set of faces of
$\cK_X$ with finitely many orbits.
\qedhere
\end{itemize}
\end{remark}

We will now describe the image of the period map.
Let~$\tau$ be the deformation type of a polarization, and
take an element~$h\in\tau$. For a vector~$u\in\Lambda$ with negative square and
linearly independent of~$h$, the hyperplane~$u^\perp\subset \bP(\Lambda_\bC)$
cuts a hyperplane section in the subset~$\Omega_h$ and induces a
divisor~$\cH_u$ in the period domain~$\cP_\tau=\Omega_h/\Mon(\Lambda,h)$ which
is called a \emph{Heegner divisor}. By abuse of notation, its image in~$\cP_T$
will also be denoted as~$\cH_u$.
\begin{proposition}
Take a deformation type of hyperkähler manifolds for which the
Kawamata--Morrison conjecture holds. Let $\tau$ be the deformation type of a
polarization and take $h\in \tau$. The complement of the image
of the period map~$\wp_\tau$ in~$\cP_\tau$ is the union of the Heegner
divisors~$\cH_\kappa$ induced by wall classes $\kappa\in\Lambda$ that are
orthogonal to~$h$.
\end{proposition}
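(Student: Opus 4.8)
The plan is to work upstairs in $\Omega_h$ and then descend to the quotient $\cP_\tau=\Omega_h/\Mon(\Lambda,h)$. By the polarized global Torelli theorem (part (ii) of Markman's theorem) together with Proposition~\ref{prop:M_amp_dense}, the map $\wp_\tau$ is identified with the map induced by the injection $\wp\colon\cM^\amp_h\hookrightarrow\Omega_h$ onto its dense open image $U$, so that $\wp_\tau(\cM_\tau)=U/\Mon(\Lambda,h)$. Thus it suffices to prove that $\Omega_h\setminus U=\bigcup_\kappa(\Omega_h\cap\kappa^\perp)$, the union running over wall classes $\kappa\perp h$, and that this union is $\Mon(\Lambda,h)$-invariant so that it descends to the asserted union of Heegner divisors. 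Throughout I will use that a point $[x]\in\Omega_h$ determines $\NS(X)=x^\perp\cap\Lambda$, that $h\in\NS(X)$, and that—by the orientation convention singling out $\Omega_h$ in the proof of Proposition~\ref{prop:Omega_h}—the class $H\coloneqq\eta^{-1}(h)$ lies in the component of the positive cone $\cC_X$ containing the Kähler classes.

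For the inclusion $\bigcup_\kappa(\Omega_h\cap\kappa^\perp)\subseteq\Omega_h\setminus U$, I would take a wall class $\kappa\perp h$ and a point $[x]\in\Omega_h\cap\kappa^\perp$. Then $(\kappa,x)=0$, so $\eta^{-1}(\kappa)\in\NS(X)$ is of type $(1,1)$ and hence a wall divisor, while $(\kappa,h)=0$ together with $\kappa^2<0$ forces $H$ to lie on the wall $\cC_X\cap\eta^{-1}(\kappa)^\perp$. Since the wall divisors cut out the chamber decomposition of $\cC_X$ whose open chambers are the Kähler cones, a class lying on such a wall is never ample on any model with period $[x]$; hence $[x]\notin U$.

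For the reverse inclusion I would take $[x]\in\Omega_h$ lying on no $\cH_\kappa$ with $\kappa\perp h$ a wall class, and show $[x]\in U$. The hypothesis says precisely that no wall class $\kappa'\in\NS(X)$ (i.e.\ with $(\kappa',x)=0$) is orthogonal to $h$, so $H$ lies on none of the walls and therefore in the interior of a single chamber $C$ of the wall-divisor decomposition of $\cC_X$. The crux is then to realize $C$ as the Kähler cone of a genuine marked model with period $[x]$ lying in $\cM_h$: combining the global Torelli theorem (surjectivity together with the identification of the inseparable fibres of the marked period map with the birational models) and the Kähler cone description of Theorem~\ref{thm:BHT}, the open chambers of $\cC_X$ are exactly the Kähler cones of marked models with period $[x]$, and the orientation of $C$—inherited from that of $H$—places this model in $\cM_h$ rather than $\cM_{-h}$. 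For that model $H$ is ample, so $[x]\in U$.

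Finally, $\Mon(\Lambda,h)$ fixes $h$ and sends wall classes to wall classes (the wall-class property depends only on the $\Mon(\Lambda)$-orbit), hence permutes the hyperplanes $\kappa^\perp$ with $\kappa\perp h$; the union $\bigcup_\kappa(\Omega_h\cap\kappa^\perp)$ is therefore $\Mon(\Lambda,h)$-invariant and descends to a union of Heegner divisors $\cH_\kappa$ in $\cP_\tau$, the finiteness coming from the boundedness of the squares of wall classes guaranteed by the Kawamata--Morrison conjecture. I expect the realization step in the reverse inclusion to be the main obstacle: one must pass from \emph{lies in the interior of an abstract chamber} to \emph{is ample on an actual model with the prescribed period and in the correct connected component}, which is exactly the point where the global Torelli theorem and the Kähler cone theorem must be used together, and where the monodromy quotient is what allows one to ignore the distinction between the birational Kähler cone and its monodromy translates.
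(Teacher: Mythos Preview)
Your first inclusion matches the paper's. For the reverse inclusion the paper takes a different route: rather than identifying the chamber containing $H$ with a Kähler cone, it uses the density of $\wp(\cM^\amp_h)$ in $\Omega_h$ (Proposition~\ref{prop:M_amp_dense}) together with the closedness of nefness to produce, for any $[x]\notin U$, a marked model $(X,\eta)$ over $[x]$ with $\eta^{-1}(h)$ strictly nef, hence on $\partial\cK_X$; the Kawamata--Morrison hypothesis is then invoked precisely to guarantee that such a boundary point of the Kähler cone lies on an actual wall $\kappa^\perp$ (ruling out accumulation of walls), yielding the contradiction. Your approach is also valid, but the key step---``$H$ on no wall $\Rightarrow$ $H$ lies in the interior of a chamber, and that chamber is the Kähler cone of some marked model in $\cM^0_\marked$ over $[x]$''---packages more than you cite: the first half needs local finiteness of the wall hyperplanes (which follows from boundedness of wall-class squares; the implication you quote actually runs the other way, \cf the remark after Proposition~\ref{prop:ample_cone}), and the second half is Markman's description of the fibre of the marked period map (\cf \cite[Sections~5--6]{Markman:survey}), not merely global Torelli plus Theorem~\ref{thm:BHT}, which in any case only covers $\KKK^{[m]}$ and $\Kum_m$. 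The paper's limiting argument is more self-contained relative to the results already established there and makes the role of the Kawamata--Morrison hypothesis explicit; yours is more structural but leans on an external input you should cite directly rather than assemble.
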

\begin{proof}
For $x\in \kappa^\perp$, if there is a polarized pair~$(X,H)$ of deformation
type~$\tau$ such that~$\wp(X)=[x]\in\Omega_\marked$, take a marking~$\eta$ such
that~$\eta(H)=h$. Then~$\eta^{-1}(\kappa)$ is of type~$(1,1)$ and thus
algebraic. The class~$H$ is contained in the wall $\eta^{-1}(\kappa)^\perp$ and
thus not ample by the description of the Kähler cone, a contradiction.

Conversely, we consider a point~$[x]\in \Omega_h$ not belonging to any Heegner
divisor $\cH_\kappa$. By Proposition~\ref{prop:M_amp_dense}, we know that
$\cM^\amp_h$ can be identified as a dense open subset of $\Omega_h$ by the
marked period map $\wp$. If $[x]$ lies in this subset, then we know that $[x]$
is the period for some marked pair $(X,\eta)\in \cM^\amp_h$ for which
$\eta^{-1}(h)$ is ample; otherwise, since nefness is a closed condition, we can
choose $(X,\eta)$ so that $\eta^{-1}(h)$ is strictly nef, that is, it lies on
the boundary of the Kähler cone $\cK_X$. 
Since Kawamata--Morrison conjecture holds for $X$, we may conclude
that $\eta^{-1}(h)$ lies on a hyperplane $D^\perp$ for some wall divisor
$D\coloneqq\eta^{-1}(\kappa)$. But this means that the period $[x]$ is
contained in the Heegner divisor $\cH_\kappa$, where the wall class $\kappa$ is
orthogonal to $h$, and this is not the case by assumption.
\end{proof}

Finally we give a criterion for the existence of a wall class~$\kappa$
in~$h^\perp$ for $\KKK^{[m]}$-type and $\Kum_m$-type.
\begin{proposition}
\label{prop:kappainhperp}
For $\KKK^{[m]}$-type or $\Kum_m$-type,
let $h\in\Lambda$ be an element of divisibility $\gamma$. Let~$k$ and~$l$ be
integers satisfying the condition~\eqref{eq:wall}. Then there is a wall divisor
$\kappa\in h^\perp$ with $\kappa^2=2l$ and $\kappa_*=k\delta_*$ if and only if
we have $\gamma\mid k$. Equivalently, this is the condition~$\div h\cdot\div
\kappa\mid 2\tilde m$.
\end{proposition}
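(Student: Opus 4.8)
The plan is to reduce the statement to a purely lattice-theoretic existence question. Since $(k,l)$ are assumed to satisfy \eqref{eq:wall}, Proposition~\ref{prop:ample_cone} already guarantees that \emph{any} primitive $\kappa\in\Lambda$ with $\kappa^2=2l$ and $\kappa_*=k\delta_*$ is a wall class; so the only content is whether such a $\kappa$ can be found inside $h^\perp$. I would first dispose of the equivalence between the two stated conditions. As $D(\Lambda)\simeq\bZ/2\tilde m\bZ$ is cyclic generated by $\delta_*$, the divisibility of $\kappa$ equals the order of $\kappa_*=k\delta_*$, namely $\div\kappa=2\tilde m/d$ with $d=\gcd(2\tilde m,k)$; and $\div h=\gamma$ always divides $2\tilde m$. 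Hence $\div h\cdot\div\kappa=\gamma\cdot 2\tilde m/d$ divides $2\tilde m$ if and only if $\gamma\mid d$, which, since $\gamma\mid 2\tilde m$, is equivalent to $\gamma\mid k$.

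Next I would normalize $h$. The property ``there exists $\kappa\in h^\perp$ with $\kappa^2=2l$ and $\kappa_*=k\delta_*$'' is invariant under replacing $h$ by $\phi(h)$ for $\phi\in\tO(\Lambda)$, because such $\phi$ preserves orthogonality, squares, and discriminant classes. Thus Eichler's criterion (Proposition~\ref{prop:eichler}) lets me move $h$ within its $\tO(\Lambda)$-orbit to a convenient representative. Writing $\Lambda=\Lambda_0\oplus\bZ\delta$ with $\Lambda_0=U_1\oplus\Lambda_0'$, where $U_1=\langle e,f\rangle$ and $\Lambda_0'$ is even unimodular still containing two hyperbolic planes, I would check that every primitive $h$ of divisibility $\gamma$ is $\tO(\Lambda)$-equivalent to a normal form $h=\gamma e+\gamma N'f+s\delta$ lying in $L_1:=U_1\oplus\bZ\delta$, with $\gcd(s,\gamma)=1$; here integrality of $N'$ comes from the congruence $h^2\equiv\gamma^2 q(h_*)\pmod{2\gamma^2}$. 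The payoff is that $\Lambda=L_1\oplus\Lambda_0'$ with $h\in L_1$, so $h^\perp=(h^\perp\cap L_1)\oplus\Lambda_0'$ and the unimodular summand $\Lambda_0'$ is entirely at my disposal.

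For sufficiency I would look for $\kappa$ of the shape $\kappa=\mu\kappa_0'+\mu\beta'f+p\delta$ with $\mu=2\tilde m/d$, $p=k/d$, $\kappa_0'\in\Lambda_0'$ primitive, and $\beta'\in\bZ$. Such a $\kappa$ is primitive (as $\gcd(k/d,\mu)=1$), has divisibility $\mu$ and class $\kappa_*=k\delta_*$, and, since $f^2=0$ and $\kappa_0'\perp f$, its square is simply $\mu^2(\kappa_0')^2-2\tilde m(k/d)^2$. Choosing $\kappa_0'$ of square $2c$, where $c$ is the integer attached to $(k,l)$ by Proposition~\ref{prop:kappa}(i) — possible because $\Lambda_0'$ contains a hyperbolic plane — makes $\kappa^2=2l$. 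Finally the orthogonality condition $\kappa\perp h$ collapses (the $\Lambda_0'$ part is automatically orthogonal to $h$) to the single linear equation $\gamma\beta'=ks$; because $\gcd(s,\gamma)=1$ this is solvable in $\beta'$ exactly when $\gamma\mid k$. This is the crux of the whole statement, and it produces the desired $\kappa$ whenever $\gamma\mid k$.

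For necessity I would run the same computation for an \emph{arbitrary} $\kappa\in h^\perp$. Decomposing $\kappa=\kappa_{L_1}+\kappa_0'$ along $\Lambda=L_1\oplus\Lambda_0'$ and writing $\kappa_{L_1}=Ae+Bf+p\delta$, orthogonality gives $\gamma(AN'+B)=2\tilde m ps$, while the class reads $\kappa_*=(2\tilde m p/\nu)\delta_*$ with $\nu=\div\kappa=\gcd(g_0,2\tilde m|p|)$ and $g_0$ the content of the $\Lambda_0$-component of $\kappa$. I expect the main obstacle to be the bookkeeping here: one localizes at each prime $\pi\mid\gamma$ and, using $g_0\mid(AN'+B)$ together with the orthogonality relation and $v_\pi(s)=0$, bounds $v_\pi(g_0)\le v_\pi(2\tilde m)-v_\pi(\gamma)+v_\pi(p)$; feeding this into $v_\pi(2\tilde m p/\nu)$ yields $v_\pi(k)\ge v_\pi(\gamma)$ for every such $\pi$, hence $\gamma\mid k$ (the degenerate cases $p=0$ or $\kappa\in\bZ\delta$ being immediate from the orthogonality equation). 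Combining the two directions with the equivalence of formulations established at the outset completes the proof.
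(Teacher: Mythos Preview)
Your argument is correct, but it takes a different route from the paper. The paper does \emph{not} normalize $h$: it keeps $h=\gamma a x+b\delta$ with $x\in\Lambda_0$ primitive, $\gcd(\gamma a,2\tilde m)=\gamma$, $\gcd(\gamma a,b)=1$, and instead normalizes $\kappa$, observing that any primitive $\kappa$ with $\kappa_*=k\delta_*$ can be written uniformly as $\kappa=\tfrac{2\tilde m}{d}(y+e\delta)+\tfrac{k}{d}\delta$ for some $y\in\Lambda_0$ and $e\in\bZ$. Orthogonality then reads $\gamma a(x,y)=b(2\tilde m e+k)$, and since $\gcd(\gamma,b)=1$ and $\gamma\mid 2\tilde m$, necessity of $\gamma\mid k$ is a one-line congruence; sufficiency is obtained by choosing $e$ so that $a\mid\tfrac{2\tilde m}{\gamma}e+\tfrac{k}{\gamma}$ and then invoking Eichler on $\Lambda_0$ to find $y$ with the prescribed values of $(x,y)$ and $y^2$.

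Your normalization of $h$ into $U_1\oplus\bZ\delta$ buys a clean decoupling in the sufficiency step --- the square is carried entirely by $\kappa_0'\in\Lambda_0'$ while the single equation $\gamma\beta'=ks$ encodes orthogonality --- which is pleasant and makes the role of $\gamma\mid k$ transparent. The cost is that necessity becomes a prime-by-prime valuation chase through $g_0$, $\nu$, and $2\tilde m p/\nu$, whereas the paper's formulation yields necessity immediately. Either approach works; the paper's is shorter overall, while yours is arguably more structural in the constructive direction.
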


\begin{proof}
Recall that $\Lambda=\Lambda_0\oplus\bZ\delta$.
Write $h=\gamma ax+b\delta$ with $x\in\Lambda_0$ primitive, $\gcd(\gamma
a,2\tilde m)=\gamma$, and $\gcd(\gamma a,b)=1$. Write
\[
\kappa=\frac{2\tilde m}{d}(y+e\delta)+\frac{k}{d}\delta,
\]
with $y\in\Lambda_0$. Thus $\kappa$ being orthogonal to $h$ is
equivalent to
\[
\gamma a(x,y)=b(2\tilde me+k).
\]
Since $\gcd(\gamma,b)=1$ and
$\gamma\mid2\tilde m$, the condition $\gamma \mid k$ is clearly necessary.
Conversely, if this condition is met, we show that there exist a
suitable vector~$y$ and an integer~$e$ that give the desired~$\kappa$.
Since~$\gcd(\gamma a,2\tilde m)=\gamma$, we may choose~$e$ such that
\[
a\left|\frac{2\tilde m}{\gamma}e+\frac{k}{\gamma}\right..
\]
Thus we only need to find $y\in\Lambda_0$ with required $y^2$ and
$(x,y)$. By Eichler's criterion, this can be done by taking $\phi\in
\O(\Lambda_0)$ such
that $\phi(x)=u_1'+\frac{x^2}2u_2'$ and then choosing $y$ such that
$\phi(y)=(x,y)u_2'+u_1''+\frac{y^2}{2}u_2''$, where $\inner{u_1',u_2'}$
and~$\inner{u_1'',u_2''}$ are two copies of hyperbolic plane~$U$ in $\Lambda_0$.
\end{proof}
In the proof, since we have explicitly described the classes~$h$ and~$\kappa$,
if we look at the sublattice~$\inner{h,\kappa,v}$ in~$\tL$, its saturation
is generated by the three classes~$\frac{h-bv}\gamma$,~$s=\frac{d\kappa-k
v}{2\tilde m}$, and~$v$. So for this particular choice of~$\kappa$, the discriminant
of the saturation is~$\left|\frac{2d^2nl}{\gamma^2\tilde m}\right|$, while in general the
discriminant would be this number divided by some square.  Since the
Mukai lattice~$\tL$ is unimodular, this is also the discriminant of the
orthogonal~$\inner{h,\kappa,v}^\perp$, which can be identified with the
orthogonal~$\inner{h,\kappa}^\perp$ in~$\Lambda$. The latter is called the
\emph{transcendental lattice} of the Heegner divisor~$\cH_\kappa$.  Its
discriminant is also referred to as the discriminant of the Heegner
divisor~$\cH_\kappa$. Therefore we have the following corollary.
\begin{corollary}
Let~$T$ be a polarization type of square~$2n$ and divisibility~$\gamma$ on
hyperkähler manifolds of~$\KKK^{[m]}$-type or $\Kum_m$-type.  Let~$k$ and~$l$ be
integers satisfying the condition \eqref{eq:wall} (which only depends on~$m$)
such that~$\gamma\mid k$. For each connected component~$\cM_\tau$ of~$\cM_T$,
the period map~$\wp_\tau$ avoids at least one irreducible Heegner
divisor~$\cH_\kappa$ of discriminant~$\left|
\frac{2d^2nl}{\gamma^2\tilde m}\right|$ in~$\cP_\tau$, where~$d=\gcd(2\tilde m,k)$.
\end{corollary}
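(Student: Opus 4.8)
The plan is to assemble this corollary directly from the preceding two results,
namely Proposition~\ref{prop:kappainhperp} and the discriminant computation in
the discussion immediately following its proof. First I would fix the
polarization type~$T$ of square~$2n$ and divisibility~$\gamma$, and pick a
deformation type~$\tau$ with representative~$h\in\tau$; by
Proposition~\ref{prop:deftype} this accounts for an arbitrary connected
component~$\cM_\tau$ of~$\cM_T$. Since~$k$ and~$l$ satisfy the numerical
condition~\eqref{eq:wall} with~$\gamma\mid k$, Proposition~\ref{prop:kappainhperp}
supplies a genuine wall class~$\kappa\in h^\perp$ with~$\kappa^2=2l$
and~$\kappa_*=k\delta_*$. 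By the description of the image of the period map, the
Heegner divisor~$\cH_\kappa$ lies in the complement of the image of~$\wp_\tau$
in~$\cP_\tau$, which gives the avoidance statement.

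Next I would compute its discriminant. The key point is that the proof of
Proposition~\ref{prop:kappainhperp} produces an \emph{explicit}~$\kappa$, so one
can read off the saturation of~$\inner{h,\kappa,v}$ in the unimodular Mukai
lattice~$\tL$: it is generated by~$\frac{h-bv}{\gamma}$,
by~$s=\frac{d\kappa-kv}{2\tilde m}$, and by~$v$. A direct Gram-matrix
computation on these three generators yields
discriminant~$\left|\frac{2d^2nl}{\gamma^2\tilde m}\right|$. Because~$\tL$ is
unimodular, the orthogonal complement~$\inner{h,\kappa,v}^\perp$ has the same
discriminant, and this complement is precisely the transcendental lattice
$\inner{h,\kappa}^\perp$ inside~$\Lambda$, whose discriminant is by definition
the discriminant of~$\cH_\kappa$. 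This is the heart of the argument and the only
place requiring genuine computation; everything else is bookkeeping.

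The main obstacle, such as it is, lies in justifying that the three listed
classes really do generate the \emph{saturation} rather than merely a
finite-index sublattice, and that the resulting discriminant is exactly the
stated number with no extra square factor. For the particular~$\kappa$
constructed in the proof of Proposition~\ref{prop:kappainhperp}, this is
ensured by the conditions~$\gcd(\gamma a,2\tilde m)=\gamma$,
$\gcd(\gamma a,b)=1$, and~$d=\gcd(2\tilde m,k)$, which guarantee that the
divisibilities of~$h$ and~$\kappa$ are accounted for exactly once; for a general
$\kappa$ with the same invariants the discriminant would only be this number
divided by a square, as the surrounding discussion already notes. I would
therefore emphasize that the corollary asserts the existence of \emph{at least
one} such Heegner divisor with precisely this discriminant, matching the
explicitly chosen~$\kappa$, and leave the finer comparison with other~$\kappa$
of the same invariants implicit.
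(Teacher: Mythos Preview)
Your proposal is correct and follows essentially the same route as the paper. The paper does not give a separate proof of this corollary; it is stated as an immediate consequence of the paragraph of discussion preceding it, and that paragraph contains exactly the ingredients you list: the explicit~$\kappa$ from the proof of Proposition~\ref{prop:kappainhperp}, the three generators~$\frac{h-bv}{\gamma}$, $s=\frac{d\kappa-kv}{2\tilde m}$, and~$v$ for the saturation of~$\inner{h,\kappa,v}$ in~$\tL$, the Gram-matrix discriminant, and the unimodularity argument identifying~$\inner{h,\kappa,v}^\perp$ in~$\tL$ with~$\inner{h,\kappa}^\perp$ in~$\Lambda$. Your observation that the saturation claim is the only real point needing care is apt; the paper simply asserts it, so your plan to justify it via the gcd conditions on~$a$, $b$, $\gamma$, and~$d$ would in fact add detail the paper omits.
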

For example, for $\KKK^{[2]}$-type, we have already seen that~$(k,l)$ can
be~$(0,-1)$,~$(1,-5)$, and~$(1,-1)$. For a polarization type~$T$ of
square~$2n$, if the divisibility~$\gamma$ is equal to~$2$, the only possible
case is~$(0,-1)$ and we get a Heegner divisor of discriminant~$2n$; if the
divisibility~$\gamma$ is equal to~$1$, the three cases are all present and we
get Heegner divisors of discriminant~$8n$,~$10n$, and~$2n$.  This result is
however not exhaustive, since the sublattice we used above to compute the
discriminant might still not be primitive in general, and the discriminant will
be divided by some square. For example, when~$\gamma=1$, by \cite[Theorem
6.1]{DebarreMacri} it is also possible to have a Heegner divisor of
discriminant~$2n/5$ in the complement.  Note also that there might be several
irreducible Heegner divisors with the same discriminant while we have only
obtained one of them.

Another simple example works for almost every polarization type~$T$:
\begin{itemize}
\item If~$\gamma\le \tilde m$ we may take~$(k,l)$ to be~$(\gamma,-\tilde m)$
(and $c=0$), so the
discriminant is equal to~$2n$. In other words, for such a polarization
type~$T$, the restriction of the period map to every connected component
of~$\cM_T$ will avoid an irreducible Heegner divisor of discriminant~$2n$ in
the period domain
\item For a polarization type~$T$ not satisfying~$\gamma\le \tilde m$,~$\gamma$
is necessarily equal to the maximal value~$2\tilde m$. For $\KKK^{[m]}$-type, we may
take~$(k,l)$ to be~$(0,-1)$ (so $c=-1$), and the discriminant is then equal
to~$\frac{2n}{m-1}$, so we get a similar conclusion.
On the other hand, for $\Kum_m$-type, a polarization type $T$ of maximal
divisibility $2\tilde m=2(m+1)$ admits no orthogonal wall divisor:
since by Proposition~\ref{prop:kappainhperp} we must have $k=0$, so there exists
no $(k,l)$ satisfying the condition~\eqref{eq:wall}.
\end{itemize}

\section{Two examples}\label{sec:examples}
Using the numerical condition \eqref{eq:wall}, we can now compare the images
by the period map of various components.  Recall the picture of the polarized
period map from \eqref{eq:polarized-period}.  We prove the following result
for $\KKK^{[m]}$-type. Clearly the same idea can be adapted to $\Kum_m$-type.
\begin{proposition}
Let $a$ be a positive integer.
\begin{itemize}
\item[(i)]
For hyperkähler manifolds of $\KKK^{[144^a+1]}$-type,
there is a unique polarization type~$T$ of square~$288$ and divisibility~$12$,
for which the polarized moduli space~$\cM_T$ has exactly two components,
with different images in~$\cP_T$ under the period map.
\item[(ii)]
For hyperkähler manifolds of $\KKK^{[6^a+1]}$-type,
there is a unique polarization type~$T$ of square~$2$ and divisibility~$1$,
for which the polarized moduli space~$\cM_T$ is connected. The group $G$ is
isomorphic to $\bZ/2\bZ$, and the image of the period map in~$\cP_\tau$ is
not~$G$-invariant above~$\cP_T$.\qedhere
\end{itemize}
\end{proposition}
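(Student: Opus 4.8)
The plan is to handle both parts in parallel: in each case I would first settle the numerical claims with Propositions~\ref{prop:numberofT} and~\ref{prop:numberoftau}, then analyze the deck group $G$, and finally isolate a single Heegner divisor that witnesses the discrepancy of images. For part~(i) I set $\tilde m=144^a=2^{4a}3^{2a}$, $n=144$, $\gamma=12$. Feeding these into Proposition~\ref{prop:numberofT} gives $N_2=N_3=1$ (for $p=3$ one has $\beta=1\le\alpha/2=1$, and for $p=2$ one has $\beta=2\le\alpha/2=2$), so such a $T$ exists and is unique; and since $\tilde\rho(12)=\rho(6)=2$, Proposition~\ref{prop:numberoftau} gives exactly two components $\tau_1,\tau_2$. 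I would then show $G$ is trivial: if $h_*=k\delta_*$ has order $12$ in $D(\Lambda)\simeq\bZ/2\tilde m\bZ$, every $\phi\in\O(\Lambda,h)$ fixes $h_*$, so $\chi(\phi)$ lies in the stabilizer of $h_*$ in $\O(D(\Lambda))$. By Lemma~\ref{lemma:OG} the latter is $(\bZ/2\bZ)^2$, acting by $\pm1$ on the $2$- and $3$-primary parts independently, and only the identity fixes a vector of order $12$; hence $\chi(\O^+(\Lambda,h))=\{1\}$, so $\O^+(\Lambda,h)=\tO^+(\Lambda,h)\subseteq\Mon(\Lambda,h)$ and $G$ is trivial. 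Consequently $\cP_{\tau_1}=\cP_{\tau_2}=\cP_T$ canonically, and $\O^+(\Lambda,h)$ acts trivially on $D(\Lambda)$.

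The core of~(i) is to compare the two images inside this common $\cP_T$. Choosing $h_1\in\tau_1$, $h_2\in\tau_2$ and $g\in\O(\Lambda)$ with $g(h_1)=h_2$, the element $a\coloneqq\chi(g)$ must carry one order-$12$ generator to the other, which forces $a\equiv5\pmod{12}$, i.e. $a$ is $+1$ on the $2$-part and $-1$ on the $3$-part of $D(\Lambda)$. Transporting $\wp_{\tau_2}$ to $\cP_T$ through $g^{-1}$, its complement consists of the $\cH_\kappa$ ($\kappa\perp h_1$) for which $g(\kappa)$ is a wall class, whereas the complement of $\wp_{\tau_1}$ consists of the $\cH_\kappa$ for which $\kappa$ itself is a wall class; since $\O^+(\Lambda,h_1)$ acts trivially on $D(\Lambda)$, each such divisor is pinned down by $(\kappa^2,\kappa_*)$, and passing from $\kappa$ to $g(\kappa)$ replaces $\kappa_*=k\delta_*$ by $ak\,\delta_*$. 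Because the wall criterion~\eqref{eq:wall} is only $\hO$-invariant and $a\not\equiv\pm1\pmod{12}$, I expect the two sets of wall pairs to differ, and would exhibit this: let $k^\dagger$ be the reduction of $a\cdot12$ (the $3$-flip of $12$, with $\gcd(2\tilde m,k^\dagger)=12$); then with $c=0$ the pair $(l,k)=\bigl(-\tilde m(k^\dagger/12)^2,k^\dagger\bigr)$ satisfies~\eqref{eq:wall}, and by Proposition~\ref{prop:kappainhperp} there is a wall class $\kappa_0\perp h_1$ realizing $(\kappa_0^2,(\kappa_0)_*)=(2l,k^\dagger\delta_*)$, so $\cH_{\kappa_0}$ lies in the complement of $\wp_{\tau_1}$. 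But $g(\kappa_0)$ has data $(2l,12\delta_*)$, and $(l,12)$ is not a wall pair: the only wall values of $l$ for $k=12$ are $-\tilde m$ (from $c=0$) and $-(\tilde m/6)^2-\tilde m$ (from $c=-1$), while $(k^\dagger/12)^2$ equals neither $1$ nor $\tilde m/36+1=(2\cdot12^{a-1})^2+1$, the latter being one more than a perfect square. Hence $\cH_{\kappa_0}$ is not in the complement of $\wp_{\tau_2}$, and the images genuinely differ.

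For part~(ii) I set $\tilde m=6^a$, $n=1$, $\gamma=1$; uniqueness of $T$ and connectedness ($\tilde\rho(1)=0$) are immediate. Now $h_*=0$ is fixed by all of $\O(D(\Lambda))\simeq(\bZ/2\bZ)^2$, so $\chi(\O^+(\Lambda,h))=\O(D(\Lambda))$ while $\chi(\Mon(\Lambda,h))=\{\pm1\}$; therefore $G\simeq(\bZ/2\bZ)^2/\{\pm1\}\simeq\bZ/2\bZ$, and its nontrivial element $\iota$ is induced by some $\phi\in\O^+(\Lambda,h)$ acting on $D(\Lambda)$ as the $3$-flip $A$. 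Because $\gamma=1$, Proposition~\ref{prop:kappainhperp} makes every wall pair realizable orthogonally to $h$, so the complement of $\wp_\tau$ is the union of all wall Heegner divisors $\cH_\kappa$, indexed by the wall pairs $(l,k)$ with $0\le k\le\tilde m$, and $\iota$ sends $\cH_\kappa\mapsto\cH_{\phi(\kappa)}$, acting on indices by $k\mapsto Ak$. To see the image is not $\iota$-invariant I would produce an asymmetric wall pair: let $k^\dagger$ be the $3$-flip of $1$ (so $\gcd(2\tilde m,k^\dagger)=1$ and $k^\dagger\ge2$); with $c=-1$ the pair $(l,k)=\bigl(-(2\tilde m)^2-\tilde m(k^\dagger)^2,k^\dagger\bigr)$ satisfies~\eqref{eq:wall}, while its flip $(l,1)$ does not, since the unique wall value of $l$ for $k=1$ is $-(2\tilde m)^2-\tilde m$ and $(k^\dagger)^2\ne1$. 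Thus $\iota$ carries a divisor of the complement to a Heegner divisor lying in the image.

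The main obstacle in both parts is exactly this last comparison: everything reduces to showing that the $3$-flip involution $A$ of $D(\Lambda)$ does not preserve the set of numerical wall pairs~\eqref{eq:wall}. The delicate point is that a carelessly chosen witness can collide---the naive $c=0$ choice for part~(ii) lands on the $c=-1$ wall value for small $a$---so the distinguishing wall class must be selected so that no value of $c$ on the flipped side reproduces the same $l$; the upper bound $c<k^2/(4\tilde m)$ in~\eqref{eq:wall} is precisely what renders the two sides asymmetric. A secondary point I must justify is the relative behaviour of $\chi$ on the stabilizer $\O^+(\Lambda,h)$ used above, which follows from the Eichler-type arguments of Section~\ref{sec:setup} once one notes that $h^\perp$ still contains two orthogonal hyperbolic planes.
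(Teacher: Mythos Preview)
Your plan matches the paper's proof almost step for step: verify the numerical claims via Propositions~\ref{prop:numberofT} and~\ref{prop:numberoftau}, pin down the deck group by computing $\chi(\O^+(\Lambda,h))$, then exhibit a single wall class whose transform under the relevant outer isometry fails condition~\eqref{eq:wall}. The only genuine difference is cosmetic---in~(i) you take $c=0$ where the paper takes $c=-1$---and both choices work.

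Two small slips to fix. In~(i), the condition $\chi(g)\equiv5\pmod{12}$ is not forced: the non-$\hO$ isometry could just as well have $\chi(g)\equiv7\pmod{12}$ (the $2$-flip rather than the $3$-flip). Your argument is insensitive to this, since either way $\gcd(2\tilde m,k^\dagger)=12$ and $k^\dagger\neq12$, so just replace ``$a\equiv5$'' by ``$a\equiv5$ or $7$'' and drop the ``$3$-flip'' label. In~(ii), for $k=1$ the admissible values of $c$ in~\eqref{eq:wall} are $c\in\{-1,0\}$, not just $c=-1$, so there are \emph{two} wall values of $l$, namely $-(2\tilde m)^2-\tilde m$ and $-\tilde m$. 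Your chosen $l=-(2\tilde m)^2-\tilde m(k^\dagger)^2$ equals neither (the second would force $4\tilde m+(k^\dagger)^2=1$), so the conclusion survives; just correct the word ``unique''. Finally, your claim that $\chi(\O^+(\Lambda,h))=\O(D(\Lambda))$ in~(ii) is true but deserves the one-line justification the paper gives: choose $h$ in a hyperbolic summand $U'$ and use Proposition~\ref{prop:chisurj} on $\O((U')^\perp)\hookrightarrow\O(\Lambda,h)$.
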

\begin{proof}
For (i), we may check by Proposition~\ref{prop:numberofT} that such
polarization type is unique and the polarized moduli space~$\cM_T$ has exactly
two components.
Note that by Proposition~\ref{prop:numberoftau},~$\gamma=12$ is the smallest
divisibility for the moduli space~$\cM_T$ to have more than
one component.

As $D(\Lambda)=\bZ/(2\cdot 144^a)\bZ$ and $\rho(2\cdot 144^a)=2$, by
Lemma~\ref{lemma:OG} we have $\O(D(\Lambda))=\set{\pm1,\pm g}$.
For $h\in T$, the class~$h_*$ is of
order~$12$ in~$D(\Lambda)$. So for any $\phi\in \O(\Lambda,h)$, we have
$\chi(\phi)=1$ since~$1$ is the unique element in~$\O(D(\Lambda))$ that
is~$\equiv1\pmod{12}$. This shows that $\O(\Lambda,h)\subset \tO(\Lambda)$ and
consequently, the group~$\O^+(\Lambda,h)/\Mon(\Lambda,h)$ is trivial. In this
case, both period domains~$\cP_\tau$ are canonically isomorphic to~$\cP_T$.

Since~$\cM_T$ has two components, we may choose $h,h'\in T$ belonging to
different~$\Mon(\Lambda)$-orbits or equivalently,~$\hO(\Lambda)$-orbits, as we
have seen in the proof of Proposition~\ref{prop:numberoftau} that they are the same.
There exists~$\psi\in\O(\Lambda)\setminus \hO(\Lambda)$ such that $\psi(h)=h'$.
We may assume that $\chi(\psi)=g$. Consider the period domain~$\cP_T$
realized as the quotient~$\Omega_h/\O^+(\Lambda,h)$
or~$\Omega_{h'}/\O^+(\Lambda,h')$. The automorphism~$\psi$ induces an
identification between the two, which maps each Heegner divisor~$\cH_\kappa$
to~$\cH_{\psi(\kappa)}$.

We consider a wall class $\kappa\in h^\perp$ with square~$2l$ and
$\kappa_*=k\delta_*$. The class $\kappa'=\psi(\kappa)$ has the same square $2l$
while $\kappa'_*=k'\delta_*$ with $k'\equiv gk\pmod{2\cdot 144^a}$. For
$\kappa'$ to also define a wall class, we need
\[
c'=c+\frac{{k'}^2-k^2}{4\cdot 144^a}\ge-1
\]
to hold.  So the idea is to choose some suitable~$k,l$ for which this condition
fails.  We let~$k=12g_0$ such that $k\equiv 12g\pmod{2\cdot 144^a}$ (so $g_0$
is the residue of $g$ modulo $24\cdot 144^{a-1}$). Since $g\ne\pm 1$ in
$\O(D(\Lambda))$, $g_0$ cannot be $\pm 1$ hence we have $g_0^2>1$. Then we can
let $c=-1$ and find the value for $l$ using \eqref{eq:wall}.
By Proposition~\ref{prop:kappainhperp}, there exists indeed such a wall
class~$\kappa\in h^\perp$.
On the other hand, the choice of $k$ means $k'=12$, so
$c'=-1+\frac{12^2-12^2g_0^2}{4\cdot 144^a}<-1$, and $\kappa'$ is not a
wall class.  This shows that the same Heegner divisor inside~$\cP_T$ is avoided
by the period map for one component but not for the other. Thus their images
in~$\cP_T$ by the period map are not the same.

For (ii), once again we may verify by Proposition~\ref{prop:numberofT} that
there is a unique such polarization type~$T$ with one connected component.
And by Lemma~\ref{lemma:OG}, since $D(\Lambda)=\bZ/(2\cdot 6^a)\bZ$ and
$\rho(2\cdot 6^a)=2$, we have $\O(D(\Lambda))=\set{\pm1,\pm g}$.

Since this $\O(\Lambda)$-orbit is unique, we may take $h=u_1'+u_2'$, where
$\inner{u_1',u_2'}$ is a copy of $U$.
The group $\O(\Lambda,h)$ contains $\O(\Lambda,U)\coloneqq\setmid{\phi\in
\O(\Lambda)}{\phi|_U=\Id}$ as a subgroup,
which is isomorphic to~$\O(U^\perp)$ since~$U$ is a direct summand.
Moreover, the inclusion~$\O(U^\perp)\simeq\O(\Lambda,U)\into\O(\Lambda)$
induces an isometry between the two discriminant groups. We use
Proposition~\ref{prop:chisurj} on $\O(U^\perp)$ to deduce that the
homomorphism~$\chi\colon\O(\Lambda)\to \O(D(\Lambda))$ when restricted
to~$\O(\Lambda,U)$, is still surjective. In particular, there is
$\phi\in\O(\Lambda,h)$ such that $\chi(\phi)=g$.
On the other hand, following the proof of Proposition~\ref{prop:eichler_SO},
there is an element $R\in \O(\Lambda, h)$ such that $\sigma(R)=-1$ and
$\chi(R)=1$. Let~$\psi$ be~$\phi$ if~$\sigma(\phi)$, and~$R\circ \phi$ otherwise.
Then~$\psi$ is in~$\O^+(\Lambda,h)$ with~$\chi(\psi)=g$.  Consequently, the
group~$\O^+(\Lambda,h)/\Mon(\Lambda,h)$ is isomorphic to~$\bZ/2\bZ$.

As in the previous case, we consider a wall class $\kappa\in h^\perp$ with
square $2l$ and $\kappa_*=k\delta_*$, for $k=g$ and $c=-1$. Such a class
exists by Proposition~\ref{prop:kappainhperp}.
However, the class $\kappa'=\psi(\kappa)$ will have $k'=1$, so
$c'=-1+\frac{1^2-g^2}{4\cdot 6^a}<-1$ and $\kappa'$ is not a wall class. This
shows that there are two Heegner divisors in $\cP_\tau$ that can be mapped to
each other under the action
of~$\O^+(\Lambda,h)/\Mon(\Lambda,h)$, but one is avoided by the period map and
the other is not. Thus we see in particular that the group~$G$ is non-trivial
and therefore also isomorphic to~$\bZ/2\bZ$, and the image of the period map is
not~$G$-invariant.
\end{proof}

\bibliographystyle{amsalpha}
\bibliography{refs}
\end{document}